\pgfplotsset{compat=1.15}
\numberwithin{equation}{section}
\title[]{A Solution Operator for the $\overline\partial$ Equation in Sobolev Spaces of Negative Index}          
\author[]{Ziming Shi}
\address{Ziming Shi (Corresponding author), Department of Mathematics,
University of California - Irvine, Irvine, CA, 92697}  
\email{zimings3@uci.edu}
\author[]{Liding Yao} 
\address{Liding Yao, Department of Mathematics,
	The Ohio State University, Columbus, OH 43210} 
\email{yao.1015@osu.edu}
\keywords{Strongly pseudoconvex domains, homotopy formula, Sobolev estimates} 
\subjclass[2020]{32A26 (Primary), 32T15 and 46E35 (Secondary)}  
\newcommand{\dist}{\operatorname{dist}}
\newcommand{\supp}{\operatorname{supp}}
\newcommand{\loc}{\mathrm{loc}}
\newtheorem{thm}{Theorem}[section]
\newtheorem{cor}[thm]{Corollary} 
\newtheorem{prop}[thm]{Proposition}
\newtheorem{lemma}[thm]{Lemma}
\theoremstyle{definition}
\newtheorem{defn}[thm]{Definition}
\newtheorem{exmp}[thm]{Example}
\newtheorem{ques}[thm]{Question}
\newtheorem{note}[thm]{Notation}
\theoremstyle{remark}
\newtheorem{rem}[thm]{Remark}
\newtheorem*{clm}{Claim}
\newtheorem*{ack}{Acknowledgment}
\renewcommand{\th}[1]{\begin{thm}\label{#1}}
	\renewcommand{\eth}{\end{thm}}
\newcommand{\co}[1]{\begin{cor}\label{#1}}
	\newcommand{\eco}{\end{cor}}
\newcommand{\pr}[1]{\begin{prop}\label{#1}}
	\newcommand{\epr}{\end{prop}}
\newcommand{\df}[1]{\begin{defn}\label{#1}}
	\newcommand{\edf}{\end{defn}}
\newcommand{\ex}[1]{\begin{exmp}\label{#1}} 
	\newcommand{\eex}{\end{exmp}}
\newcommand{\qu}[1]{\begin{ques}\label{#1}}
	\newcommand{\equ}{\end{ques}}  
\newcommand{\mk}{\begin{rem}}
	\newcommand{\emk}{\end{rem}}
\newcommand{\cl}{\begin{clm}}
	\newcommand{\ecl}{\end{clm}} 
\newcommand{\ac}{\begin{ack}}
	\newcommand{\eac}{\end{ack}} 
\newcommand{\ga}{\begin{gather}}
\newcommand{\ega}{\end{gather}}
\newcommand{\gan}{\begin{gather*}}
\newcommand{\egan}{\end{gather*}}
\newcommand{\al}{\begin{gngn}}
	\newcommand{\eal}{\end{align}}
\newcommand{\aln}{\begin{align*}}
\newcommand{\ealn}{\end{align*}}
\newcommand{\eq}[1]{\begin{equation}\label{#1}}
\newcommand{\eeq}{\end{equation}}
\newcommand{\pa}{\partial{}}
\newcommand{\na}{\nabla}
\newcommand{\db}{\dbar}
\newcommand{\we}{\wedge}
\newcommand{\ra}{\longrightarrow}
\newcommand{\sm}{\setminus}
\newcommand{\seq}{\subseteq}
\newcommand{\DD}[2]{\frac{\partial #1}{\partial #2}}
\newcommand{\B}{\mathbb{B}} 
\newcommand{\Z}{\mathbb{Z}}
\newcommand{\R}{\mathbb{R}} 
\newcommand{\C}{\mathbb{C}}
\newcommand{\N}{\mathbb{N}}
\newcommand{\V}{\mathcal{V}}
\newcommand{\U}{\mathcal{U}} 
\newcommand{\mc}{\mathcal}
\newcommand{\mf}{\mathfrak}
\newcommand{\tit}{\textit}
\newcommand{\ov}{\overline}
\newcommand{\ti}{\tilde}
\newcommand{\wti}{\widetilde}
\newcommand{\hht}{\widehat}
\newcommand{\dbar}{\overline\partial}
\newcommand{\all}{\alpha}
\newcommand{\del}{\delta}
\newcommand{\Del}{\Delta}
\newcommand{\var}{\varphi}
\newcommand{\ve}{\varepsilon}
\newcommand{\om}{\omega}
\newcommand{\Om}{\Omega}
\newcommand{\thh}{\theta}
\newcommand{\La}{\Lambda}
\newcommand{\la}{\lambda}
\newcommand{\gm}{\gamma}
\newcommand{\si}{\sigma}
\newcommand{\yh}{\frac{1}{2}}
\newcommand{\re}[1]{(\ref{#1})}
\newcommand{\rl}[1]{Lemma~\ref{#1}}
\newcommand{\rc}[1]{Corollary~\ref{#1}}
\newcommand{\rp}[1]{Proposition~\ref{#1}}
\newcommand{\rt}[1]{Theorem~\ref{#1}}
\newcommand{\rd}[1]{Definition~\ref{#1}}
\newcommand{\nn}{\nonumber}
\newcommand{\nid}{\noindent}
\newcounter{pp}
\newcommand{\bpp}{\begin{list}{$\hspace{-1em}\alph{pp})$}{\usecounter{pp}}}
	\newcommand{\epp}{\end{list}}
\newcounter{ppp}
\newcommand{\bppp}{\begin{list}{$\hspace{-1em}(\roman{ppp})$}{\usecounter{ppp}}}
	\newcommand{\eppp}{\end{list}}
\newcommand{\Bs}{\mathscr{B}}
\newcommand{\Ec}{\mathcal{E}}
\newcommand{\Fc}{\mathcal{F}}
\newcommand{\Fs}{\mathscr{F}}
\newcommand{\Gf}{\mathfrak{G}} 
\newcommand{\Hc}{\mathcal{H}}
\newcommand{\Kb}{\mathbb{K}}
\newcommand{\Kc}{\mathcal{K}}
\newcommand{\Nb}{\mathbb{N}}
\newcommand{\Nc}{\mathcal{N}}
\newcommand{\Pc}{\mathcal{P}}
\newcommand{\Rf}{\mathfrak{R}}
\newcommand{\Sc}{\mathcal{S}}
\newcommand{\Ss}{\mathscr{S}}
\newcommand{\Uc}{\mathcal{U}}
\newcommand{\Vc}{\mathcal{V}}
\newcommand{\eps}{\varepsilon}
\begin{document}
	
	\begin{abstract} 
  Let $\Omega$ be a strictly pseudoconvex domain in $\mathbb{C}^n$ with $C^{k+2}$ boundary, $k \geq 1$. We construct a $\overline\partial$ solution operator (depending on $k$) that gains $\frac12$  derivative in the Sobolev space $H^{s,p} (\Omega)$ for any $1<p<\infty$ and  $s>\frac{1}{p} -k$. If the domain is $C^{\infty}$, then there exists a $\overline\partial$ solution operator that gains $\frac12$ derivative in $H^{s,p}(\Omega)$ for all $s \in \mathbb{R}$. We obtain our solution operators via the method of homotopy formula. A novel technique is the construction of ``anti-derivative operators'' for distributions defined on bounded Lipschitz domains.  
		
	\end{abstract} \maketitle 
	\tableofcontents
	
	\section{Introduction} 
	
The purpose of the paper is to prove the following results:
	\begin{thm} \label{Thm::Ck_bdy_intro}   
	Let $\Om$ be a bounded strictly pseudoconvex domain in $\C^n$ with $C^{k+2}$ boundary, where $n \geq 2$ and $k$ is a positive integer. For $1\le q\le n$, there is a linear operator $\Hc_q$ (depending on $k$) such that for all $1<p<\infty$ and $s>\frac1p-k$,
	\begin{enumerate}[(i)]
	    \item $\Hc_q:H^{s, p}_{(0,q)}(\Om)\to H^{s+ \yh, p}_{(0,q-1)}(\Om)$ is bounded.
	    \item $u=\Hc_q\var$ is a solution to the equation $\dbar u=\var$ for any $\var \in H^{s,p}_{(0,q)}(\Om)$ which is $\dbar$-closed.
	\end{enumerate}
	Here $H^{s,p}_{(0,q)}$ denotes the space of $(0,q)$ forms with $H^{s,p}(\Om)$ coefficients, and $H^{s,p}$ is the fractional Sobolev space (see \rd{Def:: Sobolev}).
	\end{thm}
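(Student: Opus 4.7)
The plan is to obtain $\Hc_q$ from a Henkin-Ramirez-Kerzman type solution operator $H_q$ built out of a Cauchy-Fantappiè kernel associated to a $C^{k+2}$ strictly plurisubharmonic defining function of $\Om$, so that the homotopy identity $\dbar H_q \var + H_{q+1} \dbar \var = \var$ holds pointwise for $\var$ smooth up to the boundary. Standard kernel estimates (arising from the integrable singularity of the Henkin kernel combined with the anisotropic geometry of $\pa\Om$) give that $H_q : H^{s,p}_{(0,q)}(\Om) \to H^{s + \yh, p}_{(0,q-1)}(\Om)$ is bounded whenever $s \ge 0$ and $1 < p < \infty$. This settles (i) and (ii) in the positive-regularity range $s \ge 0$; the novelty must therefore lie entirely in the range $\f1p - k < s < 0$.

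For $s < 0$ the integral kernel cannot be paired pointwise against a distribution, so the plan is to reduce to the positive-regularity range via the ``anti-derivative operators'' promised in the abstract. I would construct bounded linear operators $\mc{A}_1, \dots, \mc{A}_{2n}$ on $\mc{D}'(\Om)$ such that $\sum_{j=1}^{2n} \pa_{x_j} \mc{A}_j = \mr{Id} + R$, with $R$ a smoothing remainder, and $\mc{A}_j : H^{s,p}(\Om) \to H^{s+1, p}(\Om)$ bounded in the sharp range $s > \f1p - 1$. Iterating $k$ times decomposes any $\var \in H^{s,p}_{(0,q)}(\Om)$ with $s > \f1p - k$ as $\var = \sum_{|\alpha| \le k} \pa^\alpha \psi_\alpha + \widetilde\var$, with $\psi_\alpha \in H^{s+k, p}(\Om)$ (now of positive Sobolev exponent beyond $\f1p$) and $\widetilde\var$ smooth.

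With this decomposition, define
\[
\Hc_q \var := \sum_{|\alpha| \le k} (-1)^{|\alpha|} H_q^{(\alpha)} \psi_\alpha + H_q \widetilde\var,
\]
where $H_q^{(\alpha)}$ is obtained from $H_q \circ \pa^\alpha$ by formally integrating by parts $|\alpha|$ times. The resulting interior kernels have stronger but still admissible singularities, and the boundary terms over $\pa\Om$ are controlled precisely because $\pa\Om$ is $C^{k+2}$; each piece should retain a $\yh$-gain Sobolev estimate from $H^{s+k,p}$ into $H^{s + k - |\alpha| + \yh,p}$, and summing yields (i). For (ii) one approximates a $\dbar$-closed $\var \in H^{s,p}$ by smooth $\dbar$-closed forms (which can be arranged via a suitably chosen regularization) and passes the distributional identity $\dbar \Hc_q \var + \Hc_{q+1} \dbar \var = \var$ to the limit, after checking well-definedness of $\Hc_q$, i.e.\ independence from the decomposition.

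The main obstacle I expect is the construction of the $\mc{A}_j$ on a bounded Lipschitz domain with the sharp mapping property $H^{s,p} \to H^{s+1,p}$ down to $s > \f1p - 1$. On a smooth domain one could straighten the boundary and use reflection or extension by zero together with convolution against a fundamental solution, but Lipschitz regularity rules out those conveniences and seems to demand a more intrinsic Calderón-type reproducing construction, with careful bookkeeping of boundary traces at the threshold $s = \f1p - 1$. A secondary, but still substantial, difficulty is managing the boundary integrals produced by each integration by parts so as to preserve the $\yh$-gain; it is precisely this accounting that fixes the boundary regularity requirement at $C^{k+2}$ rather than the more natural-looking $C^{k+1}$.
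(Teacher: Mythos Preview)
Your high-level strategy --- reduce to positive regularity by anti-differentiating, then integrate by parts to move derivatives onto the kernel --- is in the right spirit, but your implementation is structurally different from the paper's and has a genuine gap.

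You propose to anti-differentiate $\varphi$ itself on $\Omega$, writing $\varphi=\sum_{|\alpha|\le k}\partial^\alpha\psi_\alpha+\tilde\varphi$ with $\psi_\alpha\in H^{s+k,p}(\Omega)$, and then to define $H_q^{(\alpha)}$ by integrating by parts in $\int_\Omega K(z,\zeta)\,\partial_\zeta^\alpha\psi_\alpha(\zeta)\,dV(\zeta)$. The problem is the diagonal singularity: the Bochner--Martinelli part of any Cauchy--Fantappi\`e kernel behaves like $|\zeta-z|^{-(2n-1)}$, so $\partial_\zeta^\alpha K(z,\zeta)$ is of order $|\zeta-z|^{-(2n-1+|\alpha|)}$, which is not locally integrable once $|\alpha|\ge 2$. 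Your ``interior kernels have stronger but still admissible singularities'' is therefore false for the volume integral, and no amount of boundary-term bookkeeping repairs this. A secondary issue is that the boundary integrals you anticipate would involve traces of the $\psi_\alpha$ on $b\Omega$, and obtaining a clean $\tfrac12$-gain from such terms is itself a hard problem you do not address.

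The paper avoids both obstacles by a different placement of the anti-derivatives. It writes $\Hc_q\varphi=\int_{\Uc}K^0\wedge\Ec\varphi+\int_{\Uc\setminus\overline\Omega}K^{01}\wedge[\overline\partial,\Ec]\varphi$, where $\Ec$ is Rychkov's universal extension. The first (Bochner--Martinelli) piece is a convolution and maps $H^{s,p}\to H^{s+1,p}$ for \emph{all} $s\in\R$ by duality, so no integration by parts is needed there. For the second piece, the anti-derivative operators $\Sc^{l,\gamma}_\Omega$ are applied not to $\varphi$ but to the commutator $[\overline\partial,\Ec]\varphi$, which vanishes identically on $\Omega$; the crucial property (Theorem~\ref{Thm::anti-der_intro}\,(iii)) is that $\Sc^{l,\gamma}_\Omega$ preserves this vanishing. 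Consequently the integration by parts takes place over $\Uc\setminus\overline\Omega$, where for $z\in\Omega$ the kernel $K^{01}(z,\cdot)$ is smooth, and \emph{no boundary terms appear at all}. The price is that $\zeta$-derivatives of the Leray map $W$ blow up as $\zeta\to b\Omega$ from outside; this is exactly what Gong's regularized Henkin--Ramirez construction controls via $|D_\zeta^j W|\lesssim 1+\delta(\zeta)^{k+1-j}$, and is the true source of the threshold $s>\tfrac1p-k$ and of the $C^{k+2}$ requirement. Note also that the paper's $\Sc^{l,\gamma}_\Omega$ are bounded $H^{s,p}(\R^N)\to H^{s+l,p}(\R^N)$ for \emph{every} $s\in\R$ (they are built from Rychkov-type $\Kb$-supported dyadic resolutions on the whole space), not merely for $s>\tfrac1p-1$ as you conjecture; the restriction on $s$ enters only through the kernel estimates, not through the anti-derivatives.
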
	
	
We also have the following result when the boundary is smooth. 
\begin{thm} \label{Thm::Cinfty_bdy_intro} 
	Let $\Om$ be a bounded strictly pseudoconvex domain in $\C^n$ with $C^\infty$ boundary, where $n \geq 2$. For $1\le q\le n$ there is a linear operator $\Hc_q$ such that for all $1<p<\infty$ and $s \in \R$,
	\begin{enumerate}[(i)]
	    \item $\Hc_q:H^{s, p}_{(0,q)}(\Om)\to H^{s+ \yh, p}_{(0,q-1)}(\Om)$ is bounded.
	    \item $u=\Hc_q\var$ is a solution to the equation $\dbar u=\var$ for any $\var \in H^{s,p}_{(0,q)}(\Om)$ which is $\dbar$-closed.
	\end{enumerate}
\end{thm}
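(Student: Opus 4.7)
The plan is to deduce Theorem \ref{Thm::Cinfty_bdy_intro} from Theorem \ref{Thm::Ck_bdy_intro} by letting $k \to \infty$. On a $C^\infty$ domain the construction of Theorem \ref{Thm::Ck_bdy_intro} is available for every $k \geq 1$, so for any $s \in \mathbb{R}$ and $1 < p < \infty$ one may choose $k > \frac{1}{p} - s$ and obtain a bounded operator $\mathcal{H}_q^{(k)}: H^{s,p}_{(0,q)}(\Omega) \to H^{s+\frac12,p}_{(0,q-1)}(\Omega)$. The content of Theorem \ref{Thm::Cinfty_bdy_intro} is therefore to merge this $k$-dependent family into a single operator valid on every $H^{s,p}$ simultaneously.

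To perform the merge, I would fix a projection $P$ onto $\ker \overline\partial$ in degree $(0,q-1)$ that is continuous on every $H^{s,p}_{(0,q-1)}(\Omega)$ — for example the Bergman projection when $q=1$, or the orthogonal projection $I - \overline\partial^* N\overline\partial$ coming from the $\overline\partial$-Neumann operator $N$ for higher $q$, both of which are known to be Sobolev-continuous on the full scale on a strictly pseudoconvex domain with $C^\infty$ boundary. Define
\[
\mathcal{H}_q\varphi := (I - P)\,\mathcal{H}_q^{(k)}\varphi
\]
for any admissible $k > \frac{1}{p} - s$. Because any two operators in the family $\{\mathcal{H}_q^{(k)}\}$ yield solutions to the same equation $\overline\partial u = \varphi$, their difference lies in $\ker \overline\partial$, on which $P$ acts as the identity; hence $(I - P)\mathcal{H}_q^{(k)}\varphi$ is independent of the admissible $k$ and $\mathcal{H}_q$ is well-defined across the whole Sobolev scale.

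Property (i) then follows from the boundedness of both $\mathcal{H}_q^{(k)}$ and $P$ on the relevant Sobolev spaces, while property (ii) follows from $\overline\partial(I-P) = \overline\partial$ (since $\operatorname{range} P \subseteq \ker \overline\partial$) applied to $\mathcal{H}_q^{(k)}\varphi$. The main obstacle, and the reason the argument requires $C^\infty$ boundary rather than merely $C^{k+2}$, is the Sobolev continuity of $P$ across the full scale $s \in \mathbb{R}$; without this control the gluing would only work on a bounded range of Sobolev indices, essentially recovering Theorem \ref{Thm::Ck_bdy_intro} and no more.
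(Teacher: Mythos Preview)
Your gluing argument has a genuine gap. You assert that $(I-P)\mathcal H_q^{(k)}\varphi$ is independent of the admissible $k$ because ``any two operators in the family yield solutions to the same equation $\overline\partial u=\varphi$''. That is only true when $\varphi$ is $\overline\partial$-closed. For a general $\varphi\in H^{s,p}_{(0,q)}(\Omega)$ the homotopy formula gives $\overline\partial\mathcal H_q^{(k)}\varphi=\varphi-\mathcal H_{q+1}^{(k)}\overline\partial\varphi$, so
\[
\overline\partial\bigl(\mathcal H_q^{(k_1)}-\mathcal H_q^{(k_2)}\bigr)\varphi=\bigl(\mathcal H_{q+1}^{(k_2)}-\mathcal H_{q+1}^{(k_1)}\bigr)\overline\partial\varphi,
\]
which has no reason to vanish. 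Hence $(I-P)\mathcal H_q^{(k)}$ is \emph{not} independent of $k$ on the full space, and you have only produced a consistent operator on the closed subspace $\ker\overline\partial\subset H^{s,p}_{(0,q)}$. Part (i) of the theorem, however, demands a single operator bounded on all of $H^{s,p}_{(0,q)}(\Omega)$ for every $s\in\mathbb R$; your construction does not deliver this. A secondary but nontrivial issue is that the full-scale continuity of $P$ on $H^{s,p}$ for \emph{negative} $s$ is not the standard Bergman/Neumann mapping statement and would itself need an argument (the duality between $H^{s,p}(\Omega)$ and $H^{-s,p'}_0(\Omega)$ is not symmetric in the way your sketch presumes).

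The paper proceeds quite differently. On a $C^\infty$ domain the classical (unregularized) Henkin--Ramirez map $W$ is available and satisfies $|D_z^iD_\zeta^jW|\le C_{ij}$ for all $i,j$; using this $W$ one writes down the single explicit operator
\[
\mathcal H_q\varphi=\int_{\mathcal U}K^0_{0,q-1}(z,\cdot)\wedge\mathcal E\varphi+\sum_{|\gamma|\le l}(-1)^{|\gamma|}\int_{\mathcal U\setminus\overline\Omega}D^\gamma K^{01}_{0,q-1}(z,\cdot)\wedge\mathcal S^{l,\gamma}_\Omega[\overline\partial,\mathcal E]\varphi,
\]
and shows (as in the $C^{k+2}$ case) that the right-hand side is independent of the auxiliary anti-derivative order $l$. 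With the smooth $W$ the ``bad'' kernel term $\mathcal K_1$ from the $C^{k+2}$ analysis disappears, leaving only the $\mathcal K_0$ estimate, which holds on the range $1-l\le s\le m+\tfrac12$; since $l,m$ can be taken arbitrarily large and the operator does not depend on them, one gets boundedness on every $H^{s,p}$ from one fixed operator. No projection $P$ and no patching of $k$-dependent operators is needed.
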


The main novelty of our results is the existence of solutions when the given forms are in $H^{s,p}$ with negative (smoothness) index $s$, which can be viewed as the dual of certain positive indexed function space in the sense of distributions. 
This is new in $\dbar$ theory. In the $L^2$  and $\dbar$-Neumann theory for the $\dbar$ equation, the Hilbert space $L^2$, which is the $H^{0,2}$ space, plays a fundamental role in the methods of approach as well as the formulation of regularity results; see for example Kohn \cite{Koh63}, H\"ormander \cite{Hor65} and also the monographs \cite{Str_10} and \cite{C-S01}.  
	
	%In a vague interpretation, our regularity for all negative index relies on the fact that the $\dbar$ equation is not a boundary value problem. This is quite different from the PDE equations where boundary values are quite often imposed (see for instance for such restriction).

We now describe our approach when the given form $\var$ has negative index. We shall call the operator $\Hc_q$ in \rt{Thm::Ck_bdy_intro} a homotopy operator, since it satisfies the following $\db$ homotopy formula
\[
  \var = \dbar \Hc_q \var + \Hc_{q+1} \dbar \var. 
\]
In particular when $\var$ is a $\db$-closed form in $\Om$, $\Hc_q \var$ is a solution to the $\dbar$ equation. To construct our solution operator, we start with a $\dbar$ homotopy operator of the form 
 \begin{equation} \label{Hvar_intro}  
  \Hc \var (z) = \int_\Uc K_0(z,\zeta) \we \Ec \var(\zeta) + \int_{\Uc \sm \Om} K_1(z,\zeta) \we [\dbar, \Ec] \var, \quad [\dbar, \Ec] \var := \dbar \Ec \var - \Ec \dbar \var,    
 \end{equation} 
 where $\Uc$ is a neighborhood of $\ov \Om$ and 
$\Ec$ denotes some extension operator that extends a function or distribution class defined on $\Om$ to the same class in the whole space $\C^n$. A homotopy formula of this form was first constructed in \cite{Gong19} to prove the estimates for H\"older-Zygmund space on strictly pseudoconvex domains with $C^2$ boundary, and a similar modified form was introduced in \cite{S-Y21-1} to prove the estimates in Sobolev space $H^{s,p}$ of positive index $s$.

The kernel $K_0$ in \re{Hvar_intro} is a linear combination of the first derivative of the Newtonian potential, and consequently one can use standard theory of singular integrals to show that the first integral in \re{Hvar_intro} is in $H^{s+1,p}(\Om)$ for any $\var \in H^{s,p}(\Om)$ with $s \in \R$. 
For the second integral, we note that the kernel and its derivatives in $z$ will blow up as $\zeta \in \Uc \sm \Om$ approaches to the boundary $b \Om$. 
The important fact here is that the commutator $[\dbar, \Ec] \var$ vanishes identically inside $\Om$, and thus have a certain vanishing order as $\zeta \to b\Om$. Accordingly, one would expect to have some cancellations between the blow-up of the kernel and the vanishing of the commutator near $b\Om$, so that the integrals can be estimated.  
This is indeed the key idea used earlier in \cite{Gong19} and \cite{S-Y21-1}. However, in the present case, the commutator $[\dbar, \Ec]\var$ belongs to a Sobolev space with negative index, thus it exists only in the sense of distribution. In other words, the regularity is too low to even speak of the vanishing order. To remedy this, our idea is to express $[\dbar, \Ec]\var$ as the derivatives of more regular functions 
that lie in some positive index space. For this to work, we need to make sure these ``more regular" functions still have the important property that it vanishes identically in $\Om$. More specifically we construct a class of ``anti-derivative" operators on bounded Lipschitz domains.   
\begin{prop}[Anti-derivative operator with support condition] \label{Prop::anti-der_intro}
		Let $\Om$ be a bounded Lipschitz domain, and let $\Ss'(\R^N) $ denote the space of tempered distributions on $\R^N$. For any $m \in \Z^+$, there exist linear operators $\Sc_{\Om}^{m, \gm}: \Ss' (\R^N) \to \Ss'(\R^N)$, with $ |\gm| \leq m$ such that  
		\begin{enumerate}[(i)] 
			\item \label{Item::anti-der_intro::1}
			$\Sc^{m, \gm}_{\Om}: H^{s,p} (\R^N)  \to H^{s+m,p} (\R^N)$ for all $1 < p < \infty$ and $s \in \R$; 
			\item \label{Item::anti-der_intro::2}
			$
			f =  \sum_{|\gm| \leq m } D^{\gm} \Sc^{m,\gm}_{\Om} f,  \quad f \in \Ss'(\R^N);     
			$
			\item \label{Item::anti-der_intro::3}
			If $f \in \Ss'(\R^N)$ satisfies $f|_{\Om} \equiv 0$, then $(\Sc_{\Om}^{m, \gm} f)|_{\Om} \equiv 0$ for $|\gm| \leq m$.   
		\end{enumerate}
	\end{prop}
In other words, the operator $S^{m,\gm}_\Om$ gains $m$ derivatives. 
 We can now apply the proposition with $f$ being the commutator $[\dbar,\Ec] \var$, and write the second integral in \re{Hvar_intro} as 
\[
 \int_{\Uc \sm \Om} K_1(z,\zeta)\wedge\sum_{|\gm| \leq m }   D^{\gm} \Sc^{m,\gm}_{\Om} [\dbar, \Ec] \var (\zeta) \, dV(\zeta),  
\]
where $\Sc^{m,\gm}_{\Om} [\dbar, \Ec] \var$ is identically $0$ inside $\Om$ and is in $H^{s,p} (\C^n)$ for $s>0$ (by choosing $m$ sufficiently large).  
We still need to transport the extra derivatives $D^\gm$ to the kernel $K_1$, a procedure we call ``reverse integration by parts", since it moves derivatives to the kernel rather than away from it as is usually done with integration by parts. Finally the blow up of $D^\gm K_1$ cancels nicely with the vanishing order of $\Sc^{m,\gm}_{\Om} [\dbar, \Ec] \var $ near $b\Om$, and thus we can show that the integral is in $H^{s+\yh,p}(\Om)$ for any $\var \in H^{s,p}(\Om)$ with $s> \frac{1}{p}-k$.  

To control the blow-up order of derivatives at the singularities of the kernel $K_1$, we use the regularized Henkin-Ramirez functions introduced in (\cite[Section 5]{Gong19}) when the boundary is $C^{k+2}$, $k \geq 1$. For the case of $C^\infty$ boundary, the classical Henkin-Ramirez function is sufficient.  

\rp{Prop::anti-der_intro} is interesting in its own right and has other applications as well.  In a recent preprint \cite{S-Y21-2}, the authors generalize \rp{Prop::anti-der_intro} to Triebel-Lizorkin and Besov spaces. The results are then used to derive several important properties of the function spaces and the universal extension operator of Rychkov (see \cite{Ry99}). For the proof of \rp{Prop::anti-der_intro}, we remark that
  Condition \ref{Item::anti-der_intro::3} is the most non-trivial statement. To satisfy solely \ref{Item::anti-der_intro::1} and \ref{Item::anti-der_intro::2}, one can simply take $f=(I-\Delta)^m((I-\Delta)^{-m}f)=\sum_{|\gamma|\le 2m}c_\gamma D^\gamma ((I-\Delta)^{-m}f)$ on $\R^N$, where $(I-\Delta)^{-m}$ is the Bessel potential operator (see \eqref{BesselPotent}). Our construction of $\Sc^{m,\gm}_{\Om}$ is inspired by Rychkov's construction of the universal extension operator \cite{Ry99}. 
  The idea is that to break up $\hht f$ using a special Littlewood-Paley family $\{\la_j\}$, and then reduce it to a division problem in the frequency space. 
  %In particular, we need $\la_j$ to satisfy a) $\la_j$ are supported in some cone; and b) $\hht{\la_j}$ vanish to infinite order at the origin. Property (a) ensures that the anti-derivative operator is well-defined (locally) near Lipschitz boundary. Property (b) is used to smooth out the singularities resulted from the division and also prove the boundedness of the anti-derivative operator in Sobolev space. 

We now mention some brief history of the $1/2$ estimate for $\dbar$ equation in space of positive index, for which the theory has been well-studied and is quite satisfactory now. For $\Om$ with $C^{\infty}$ boundary, Kohn \cite{Koh63} showed that the $L^2$ canonical solution $\dbar^{\ast} \Nc \var$ is in $H^{s+\yh,2}(\Om)$ if $\var \in H^{s,2}(\Om)$ for any $s \geq 0$. Here $\Nc$ is the $\dbar$-Neumann operator. Later, Greiner-Stein \cite{G-S77} proved the $1/2$ estimate for $\dbar^{\ast} \Nc \var$ when $\var$ is a $(0,1)$ form of class $H^{k,p}(\Om)$, where $k$ is a non-negative integer and $1<p<\infty$. Their results were subsequently extended by Chang \cite{Cha89} to any $(0,q)$ form $\var$. 
	Also in \cite{G-S77}, it was proved that $\dbar^{\ast}\Nc \var \in \La^{r+\yh}(\ov \Om)$ for any $\var \in \La^r(\ov \Om)$, $r > 0$, where $\La^r$ is the H\"older-Zygmund space. 
	In both \cite{G-S77} and \cite{Cha89}, the boundary is required to be $C^{\infty}$.
	
In parallel with the $\dbar$-Neumann method, another theory has been developed to solve the $\db$ equation using integral kernels, in a way that
generalize the one dimensional Cauchy integral to higher dimension. Compared to the PDE approach used by the school of Kohn and Stein, the kernel method is more geometric in nature, and often times simpler to use. The solutions are not canonical in any sense, however they come in with a great variety of estimates, including some like the sup norm estimate which are not accessible by other methods. The theory began with the pioneering work of Grauert-Lieb \cite{G-L_70} and Kerzman \cite{Ker_71}. Soon afterwards, Henkin-Ramanov \cite{H-R71} constructed a solution operator on domains with $C^2$ boundary with the sharp $1/2$ gain in smoothness in H\"older space. The operator of Henkin-Romanov works for a $(0,1)$ form $\var \in C^0(\ov{\Om})$.  
Later on, Siu \cite{Siu74} (for $(0,1)$ form $\var$) and Lieb-Range \cite{L-R80} (for $(0,q)$ forms $\var$) constructed solutions in $C^{k+\yh}(\overline\Om)$ when $\var$ in $C^k(\overline\Om)$, under the assumption that boundary is $C^{k+2}$ for positive integer $k$. For a detailed history of the method of integral kernels in $\dbar$ problems, we refer the reader to the book by Range \cite{Ran86}.  
	
More recently, using the kernel method Gong \cite{Gong19} constructed a $\dbar$ solution operator on strictly pseudoconvex domains with $C^2$ boundary which gains $1/2$ derivative for any $\var \in \La^r(\ov\Om)$, $r>1$. Assuming $C^2$ boundary still, the authors in a recent preprint \cite{S-Y21-1} found a $\dbar$ solution operator which gains $1/2$ derivative for any $\var \in H^{s,p}(\ov \Om)$, with  $1<p<\infty$ and $s> \frac{1}{p}$; it was further shown in \cite{S-Y21-1} that the same solution operator gains $1/2$ derivative for any $ \var \in \La^r (\ov \Om)$, $r>0$. 

Historically, the problem for constructing a $\dbar$ solution operator with
derivative estimates has been a quite difficult one. In fact, it is still an open problem whether the solution operator of Henkin-Ramanov gains $1/2$ derivative for $\var \in C^k$(or $\La^k$), $ k \geq 1$. In this regard, the use of extension operator and the commutator to prove derivative estimates turns out to be extremely fruitful and arguably essential. We remark that the construction of  
the homotopy operator \re{Hvar_intro} was inspired by the earlier work of Lieb-Range \cite{L-R80}, Peters \cite{Pe91} and Michel-Shaw \cite{M-S99}. Similar techniques have yielded sharp estimates for $\dbar$ equation on other types of domains such as finite type convex domains \cite{Ale06}, and strongly $\C$-linearly convex domains of class $C^{1,1}$ \cite{G-L21}.

The paper is organized as follows. In Section \ref{Section2} we first review the definitions and basic properties of Sobolev spaces. 
We recall the key elements in Rychkov's construction of the universal extension operator, which are used in the proof of \rp{Prop::anti-der_intro}. In Section \ref{Section3}, we construct the anti-derivative operators $\Sc^{m, \gm}_{\Om}$ and prove \rp{Prop::anti-der_intro} based on results in Section \ref{Section2}. In Section \ref{Section4}, we prove Theorems \ref{Thm::Ck_bdy_intro} and \ref{Thm::Cinfty_bdy_intro}. 

We denote the set of positive integers by $\Z^+$, and we let $\Nb= \Z^+ \cup \{ 0 \}$. We shall use $x \lesssim y$ to mean that $x \leq C y$ where $C$ is a constant independent of $x,y$ and $x \approx y$ for $x \lesssim y$ and $y \lesssim x$.  
    
\begin{ack}
  The authors would like to thank Xianghong Gong for many helpful discussions.   
  Part of the paper was prepared during the first-named author's visit to University of Wisconsin-Madison, which was partially supported by the Simons collaboration grant (award number: 505027).
\end{ack}
	\section{Function Spaces}\label{Section2}
	In this section we review the definition and properties of Sobolev spaces. We also recall the Littlewood-Paley characterization, which plays a central role in later proofs.   
	
	\begin{defn}
		Let $\Omega\subseteq\R^N$ be an open set, and let $k\in\N$, $1\leq p<\infty$. We denote by $W^{k,p}(\Om)$ the space of (complex-valued) functions $f\in L^p(\Omega)$ such that $D^\alpha f\in L^p(\Omega)$ for all $|\alpha|\leq k$, and equipped with the norm
		\begin{gather*}
		\|f\|_{W^{k,p}(\Omega)}:=\sum_{|\alpha|\leq k}\|D^\alpha f\|_{L^p(\Omega)}=\sum_{|\alpha|\leq k}\left(\int_\Omega|D^\alpha f (x)|^p\right)^\frac1p \, dV(x),\quad 1\le p<\infty.
		\end{gather*}
\end{defn} 	
% 		Let $\lambda$ be a positive continuous function on $\Omega$; we define the weighted Sobolev space $W^{k,p}(\Omega;\lambda)$ as the space of $f$ in $W^{k,p}_\loc(\Omega)$ such that the following norm is finite:
% 		\begin{gather*}
% 		\sum_{|\alpha|\leq k}\left(\int_\Omega|\lambda(x)D^\alpha f (x) |^p \, dV(x)\right)^\frac1p ,\quad 1\le p<\infty.
% 		\end{gather*}
% 		If $\Omega$ is a domain in $\C^n$ with complex variable $z$, we write instead  $\lambda(z)^p dV(z)$. 
% 	\end{defn}
% 	In our application, we will take $\lambda(x)=\dist(x,b\Omega)^s$ for some $s\in\R$.
	
	We denote by $\Ss(\R^N)$ the space of Schwartz functions, and by $\Ss'(\R^N)$ the space of tempered distributions. For $g \in \Ss(\R^N)$, we set the Fourier transform $\hht g(\xi)=\int_{\R^N} g(x)e^{-2\pi i x \cdot \xi}dx$, and the definition extends naturally to tempered distributions. 
	
	\begin{defn}
		We let $\Ss_0(\R^N)$ denote the space\footnote{In some literature like \cite[Section 5.1.2]{Tri83}, the notation is $Z(\R^N)$.} of all infinite order moment vanishing Schwartz functions. That is, all $f\in\Ss(\R^N)$ such that $\int x^\alpha f(x)dx=0$ for all $\alpha\in\N^N$, or equivalently, all $f\in\Ss(\R^N)$ such that $\widehat f(\xi)=O(|\xi|^\infty)$ as $\xi\to0$. 
	\end{defn}
	
	\begin{defn}[Fractional Sobolev Space] \label{Def:: Sobolev}
		Let $s\in\R$, $1<p<\infty$. We define $H^{s,p}(\R^N)$ to be the fractional Sobolev space consisting of all (complex-valued) tempered distribution $f\in\Ss'(\R^N)$ such that $(I-\Delta)^\frac s2f\in L^p(\R^N)$, and equipped with norm 
		\[ 
		\|f\|_{H^{s,p}(\R^N)} := \|(I-\Delta)^\frac s2f\|_{L^p(\R^N)}.
		\]
		Here $(I-\Delta)^\frac s2 $ is the Bessel potential operator given by 
		\begin{equation}\label{BesselPotent}
		    (I - \Del)^\frac s2 f=((1 + 4\pi^2|\xi|^2)^\frac s2 \hht f(\xi))^\vee.
		\end{equation}
		
	\end{defn}
 
	\begin{rem}
		There is another type of commonly-used fractional Sobolev spaces called \emph{Sobolev-Slobodeckij spaces}, which is in fact the Besov $\Bs_{pp}^s$-spaces. We do not use the Sobolev-Slobodeckij type space in our paper.
	\end{rem}
	
	\begin{defn}\label{domain_def}
		A \textit{special Lipschitz domain} $\omega$ is an open set of $\R^N$ with Lipschitz boundary that has the following form
		\[  
		\omega= \{(x',x_N)\in \R^N: x_N>\rho(x')\},
		\quad x'=(x_1, \cdots, x_{N-1}). 
		\] 
		where $\rho:\R^{N-1}\to\R$ is Lipschitz and satisfies $\|\nabla\rho\|_{L^\infty}<1$.
		
		A \textit{bounded Lipschitz domain} $\Omega$ is a bounded open set of $\R^N$ such that for any $p\in b\Omega $ there is an affine linear transformation $\Phi:\R^N\to\R^N$ and a special Lipschitz domain $\omega$ such that
		\begin{equation*}
		\Omega\cap\Phi(\B^N)=\Phi(\omega\cap\B^N).
		\end{equation*} 
	\end{defn} 

	We denote the positive cone $\Kb$ as
	\begin{equation*}
	\Kb=\{(x',x_N):x_N>|x'|\}.
	\end{equation*} 
	By the assumption $|\na \rho| \leq 1$, we have $\om + \Kb \subseteq \om$. 
	\begin{rem}
		In many literature like \cite[Section 1.11]{Tri06}, the definition for a special Lipschitz domain only requires $\rho$ to be a Lipschitz function. In other words,  $\|\nabla\rho\|_{L^\infty}$ is  finite but can be arbitrary large.
		By taking invertible a linear transformation we can make $\nabla\rho$ small in the new coordinates. 
	\end{rem}

	For functions and distributions defined on domains of $\R^N$, we have the following definition: 
	\begin{defn}\label{domdistdef}
		Let $\Om \subset \R^N$ be an open set.
		\begin{enumerate}[(i)]
    		    \item Define $\Ss' (\Om): = \{\tilde f|_{\Om}:\tilde f\in \Ss' (\R^N) \}$.
    		    \item For $s \in \R$ and $1 < p < \infty$, define $H^{s,p}(\Om): = \{\tilde f|_{\Om}:\tilde f\in H^{s,p} (\R^N)\}$ with norm
		\[
		\| f \|_{H^{s,p}(\Om)} := \inf_{\wti{f}|_{\Om} = f} \|\tilde f\|_{H^{s,p} (\R^N)}.  
		\] 
		\item For $s \in \R$ and $1 < p < \infty$, define $H^{s,p}_0 (\Om)$ to be the subspace of $H^{s,p} (\R^N) $ which is the completion of $C^{\infty}_0 (\Om)$ under the norm $\| \cdot \|_{H^{s,p} (\R^N)}$. 
		    
		\end{enumerate}
	\end{defn} 
	
	\begin{rem}  
		When $k$ is a non-negative integer we have $H^{k,p}(\R^N) = W^{k,p} (\R^N)$ with equivalent norms. If $\Om$ is a bounded Lipschitz domain, we have $H^{k,p}(\Om) = W^{k,p} (\Om)$ which is intrinsically defined. (See \cite[Theorem 2.5.6(ii)]{Tri83} and \cite[Theorem 1.222(i)]{Tri06}.) For non-integer $s$, there is no such simple intrinsic characterization for  $H^{s,p}(\Om)$.  
		For $1<p<\infty$ and $s\in \R $, the Sobolev space $H^{s,p}(\R^N)$ is the same as the Triebel-Lizorkin space $\Fs_{p2}^s(\R^N)$ with equivalent norm. See \cite[Definition 2.3.1/2 and Theorem 2.5.6(i)]{Tri83}. More specifically, we have the following result. 
	\end{rem}
	
	\begin{prop}[Littlewood-Paley Theorem]\label{Prop::L-P}
		Let $\la_0\in\Ss(\R^N)$ be a Schwartz function whose Fourier transform satisfies
		\begin{equation*} 
		\supp\widehat\la_0 \seq \{|\xi|<2\},\qquad\widehat\la_0|_{\{|\xi|\le1\}}\equiv1.
		\end{equation*} 
		For $j\ge1$, let $\la_j$ be the Schwartz function whose Fourier transform is  
		$\hht{\la}_0(2^{-j} \xi) - \hht{\la}_0(2^{-(j-1)} \xi)$.   
		Then for $s\in\R$ and $1<p<\infty$, there exists a $C=C_{\la_0,p,s}>0$ such that
		\begin{equation}\label{Eqn::LPNorm}
		C^{-1}\| f \|_{H^{s,p}(\R^N)} 
		\leq \bigg(\int_{\R^N}\Big(\sum_{j=0}^\infty2^{2js}|\la_j\ast f(x)|^2\Big)^\frac p2dx\bigg)^\frac1p
		\leq C \|f\|_{H^{s,p}(\R^N)},\quad\forall f\in\Ss'(\R^N), 
		\end{equation}
		provided that either term in the inequality is finite. 
	\end{prop}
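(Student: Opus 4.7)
The plan is to recognize the middle quantity of \eqref{Eqn::LPNorm} as (by definition) the Triebel--Lizorkin norm $\|f\|_{\Fs_{p2}^s(\R^N)}$ associated to the dyadic resolution $\{\la_j\}$, and to show directly that this norm is equivalent to $\|\cdot\|_{H^{s,p}(\R^N)}$. Both inequalities hinge on the vector-valued Mihlin--H\"ormander multiplier theorem (equivalently, Calder\'on--Zygmund theory for $\ell^2$-valued kernels combined with the Fefferman--Stein vector-valued maximal inequality).

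For the upper bound, I would set $g := (I-\Del)^{s/2} f$, so that $\|g\|_{L^p}=\|f\|_{H^{s,p}}$, and write $\la_j\ast f = K_j\ast g$ with $\hht K_j(\xi) := \hht\la_j(\xi)(1+4\pi^2|\xi|^2)^{-s/2}$. Because $\hht\la_j$ is supported in the dyadic annulus $\{2^{j-1}\le|\xi|\le 2^{j+1}\}$ for $j\ge 1$, the rescaled symbols $\{2^{js}\hht K_j\}_{j\ge0}$ form a uniformly bounded family of compactly supported Schwartz multipliers whose derivative bounds are dyadic dilates of those of a single fixed symbol. Applying the vector-valued multiplier theorem to the ``analysis'' operator $g\mapsto(2^{js}K_j\ast g)_{j\ge0}$, viewed from $L^p(\R^N)$ to $L^p(\R^N;\ell^2)$, yields
\[
\bigg\|\bigg(\sum_{j=0}^\infty 2^{2js}|\la_j\ast f|^2\bigg)^{1/2}\bigg\|_{L^p(\R^N)} \ls \|f\|_{H^{s,p}(\R^N)}.
\]

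For the converse, I would choose a fattened partition $\wti\la_j\in\Ss(\R^N)$ with $\hht{\wti\la}_j\equiv 1$ on $\supp\hht\la_j$ and $\supp\hht{\wti\la}_j$ contained in a slightly enlarged annulus. Since $\sum_{j=0}^\infty\hht\la_j\equiv 1$ on $\R^N$ by telescoping, one has $f=\sum_j\la_j\ast f$ in $\Ss'(\R^N)$, and the identity $\la_j\ast f = \wti\la_j\ast\la_j\ast f$ gives
\[
(I-\Del)^{s/2}f = \sum_{j=0}^\infty M_j\ast(2^{js}\la_j\ast f),\qquad \hht M_j(\xi) := 2^{-js}\hht{\wti\la}_j(\xi)(1+4\pi^2|\xi|^2)^{s/2}.
\]
Applying the dual $L^p(\R^N;\ell^2)\to L^p(\R^N)$ version of the same multiplier theorem to the ``synthesis'' operator $(h_j)_j\mapsto\sum_j M_j\ast h_j$ delivers the reverse inequality.

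The main technical obstacle is the vector-valued Calder\'on--Zygmund / Fefferman--Stein estimate; it goes through because, after the rescaling $2^{\pm js}$, each $K_j$ and each $M_j$ is essentially an $L^1$-normalized dilate of a single fixed Schwartz kernel, so the relevant uniform $L^1$ bounds and H\"ormander-type conditions hold. A secondary subtlety is that the statement is formulated for arbitrary $s\in\R$ and $f\in\Ss'(\R^N)$ under the hypothesis that one of the two sides is already finite: one handles this by running the estimates on a dense subspace such as $\Ss_0(\R^N)$ and passing to the limit, using that $(I-\Del)^{s/2}$ is an isomorphism on the relevant distribution classes.
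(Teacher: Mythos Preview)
Your proposal is correct and is essentially the standard route to this result (vector-valued Mihlin--H\"ormander / Fefferman--Stein theory applied to the analysis and synthesis operators). However, the paper does not actually give a proof of this proposition at all: it treats the Littlewood--Paley characterization as a known fact, identifies the middle quantity as the Triebel--Lizorkin norm $\|f\|_{\Fs_{p2}^s(\R^N)}$, and cites \cite[Definition 2.3.1/2 and Theorem 2.5.6(i)]{Tri83} (for $H^{s,p}=\Fs_{p2}^s$) and \cite[Section 2.3.2]{Tri83} (for independence of the choice of $\la$). So you are supplying a genuine proof where the paper simply refers to the literature.

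What your approach buys is self-containment: the reader sees exactly which harmonic-analysis tool drives the equivalence. What the paper's citation buys is brevity and a pointer to a source where the technical subtleties (the vector-valued Calder\'on--Zygmund theory, the density argument for general $f\in\Ss'$) are worked out in full. Your sketch is accurate on both counts, including the remark that the rescaled symbols $2^{js}\hht K_j$ and $2^{-js}\hht M_j$ are uniform dilates of a fixed compactly supported bump, which is precisely the mechanism that makes the multiplier bounds uniform in $j$.
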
 
	The Triebel-Lizorkin norm $\Fs^{s}_{p2}(\R^N)$ is given by 
	\[
	  \| f \|_{\Fs^{s}_{p2}(\R^N)}:= \bigg(\int_{\R^N}\Big(\sum_{j=0}^\infty2^{2js}|\la_j\ast f(x)|^2\Big)^\frac p2dx\bigg)^\frac1p, 
	\]
	where different choices of $\la$ satisfying the aforementioned properties give equivalent norms (see \cite[Section 2.3.2]{Tri83}). Thus \re{Eqn::LPNorm} implies $H^{s,p}(\R^N)= \Fs^s_{p2}(\R^N)$.

	By way of Definition \ref{domdistdef}, one can also define for an arbitrary open set $\Omega\subset\R^N$ the space $\Fs_{p2}^s(\Omega)=\{\tilde f|_\Omega:\tilde f\in \Fs_{p2}^s(\R^N)\}$ with norm  $\|f\|_{\Fs_{p2}^s(\Omega)} := \inf\limits_{\tilde f|_\Omega=f}\|\tilde f\|_{\Fs_{p2}^s(\R^N)}$ (see \cite[Definition 1.95(i)]{Tri06}.

	% Therefore we have $H^{s,p}(\Omega)=\Fs_{p2}^s(\Omega)$ with
	% 	\begin{equation}\label{Eqn::Hsp=Fp2sOnDomain}
	% 	\|f\|_{H^{s,p}(\Omega)}=\inf\limits_{\tilde f|_\Omega=f}\|\tilde f\|_{H^{s,p}(\R^n)}\approx_{\Omega,s,p}\inf\limits_{\tilde f|_\Omega=f}\|\tilde f\|_{\Fs_{p2}^s(\R^n)}=\|f\|_{\Fs_{p2}^s(\R^n)},\quad\forall f\in \Ss'(\R^n),
	% 	\end{equation}
	% 	provided that either term is finite.

	\begin{prop} \label{Prop::DualSpace} 
		Let $\Omega\seq\R^N$ be a bounded Lipschitz domain. Suppose $1<p<\infty$ and $s\in\R$. Then we have the following equalities of spaces, where the norms are equivalent.
		\begin{enumerate}[(i)]
			\item\label{Item::DualSpace::RN}  $H^{s,p}(\R^N)=H^{-s,p'}(\R^N)'$, where $p'=\frac p{p-1}$.
			\item \label{H_0space} 
			For $s > \frac{1}{p}-1$, $H^{s,p}_0( \Om) 
			= \{f \in H^{s,p}(\R^N): f|_{\overline\Omega^c}  = 0 \}$.  
			\item\label{Item::DualSpace::H0Char} $H^{s,p}_0(\Omega)
			=H^{-s,p'}(\Omega)'$ and $H^{-s,p'}(\Omega)=H^{s,p}_0(\Omega)'
			$, provided that $s > \frac{1}{p}-1$.
			%\item\label{Item::DualSpace::Omega}  $=\{f\in H^{s,p}(\R^N):\supp f\subseteq\overline\Omega\}$, provided that $s>\frac1p-1$. 
			%\item\label{Item::DualSpace::H0Char} $H^{s,p}_0(\Omega)=H^{-s,p'}(\Omega)'\{f\in H^{s,p}(\R^N):\supp f\subseteq\overline\Omega\}$, provided that $s>\frac1p-1$.
		\end{enumerate}
	\end{prop}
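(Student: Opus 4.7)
The plan is to take the three parts in order, reducing (iii) to abstract Banach-space duality once (i) and (ii) are established.

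For (i), I would argue straight from the definition: by \rd{Def:: Sobolev} the operator $(I-\Delta)^{s/2}$ is an isometric isomorphism $H^{s,p}(\R^N)\to L^p(\R^N)$ for every $s\in\R$ and $1<p<\infty$, and $L^p(\R^N)'=L^{p'}(\R^N)$ is classical. Composing these produces the nondegenerate bounded pairing
\[
\langle f, g \rangle := \int_{\R^N} (I-\Delta)^{s/2} f \cdot (I-\Delta)^{-s/2} g \, dx,\qquad f \in H^{s,p}(\R^N),\ g \in H^{-s,p'}(\R^N),
\]
which extends the canonical $\Ss\times\Ss'$ pairing and realizes $H^{s,p}(\R^N)'=H^{-s,p'}(\R^N)$ with equivalent norms. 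This is standard material citable from \cite{Tri83}.

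For (ii), the inclusion $H^{s,p}_0(\Om)\seq\{f\in H^{s,p}(\R^N):f|_{\ov\Om^c}=0\}$ is automatic: restriction to an open set is continuous on $H^{s,p}(\R^N)$, and $C_0^\infty(\Om)$-elements have support in $\Om$. The substantive step is the reverse inclusion, for which I plan to use a finite partition of unity subordinate to a cover of $b\Om$ by charts in which $\Om$ looks like a special Lipschitz domain $\om=\{x_N>\rho(x')\}$, and exploit the cone inclusion $\om+\Kb\seq\om$ already built into the paper's setup. Given $f\in H^{s,p}(\R^N)$ with $\supp f\seq\ov\Om$, on each chart the translate $f_t(x):=f(x-te_N)$ is supported in $\{x_N\ge\rho(x')+t\}$, which because $\|\nabla\rho\|_{L^\infty}<1$ has distance of order $t$ from $b\om$; mollifying with $\phi_\eps$ for $\eps\ll t$ and multiplying by a compactly supported cutoff yields an element of $C_0^\infty(\Om)$, and sending $\eps,t\to0^+$ produces $H^{s,p}$-convergence to $f$ by continuity of translation and standard mollifier estimates. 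The hypothesis $s>\frac1p-1$ is essential and enters here: it is precisely the sharp range in which no nonzero distribution in $H^{s,p}(\R^N)$ can be concentrated on the Lipschitz hypersurface $b\Om$, so no boundary-distribution obstruction separates the two spaces. I would invoke the sharp Triebel statement from \cite{Tri06} to close this step.

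For (iii), combining (i) and (ii) identifies $H^{s,p}_0(\Om)$ with a closed subspace of $H^{s,p}(\R^N)$, and its annihilator in $H^{-s,p'}(\R^N)$ under the pairing of (i) is exactly $\{g:g|_\Om=0\}$. By \rd{domdistdef}, the quotient $H^{-s,p'}(\R^N)/\{g:g|_\Om=0\}$ with the quotient norm \emph{is} $H^{-s,p'}(\Om)$, so the standard subspace--quotient duality gives $H^{s,p}_0(\Om)'=H^{-s,p'}(\Om)$ with equivalent norms. The converse $H^{-s,p'}(\Om)'=H^{s,p}_0(\Om)$ then follows from reflexivity of $H^{s,p}$, inherited from $L^p$ via $(I-\Delta)^{s/2}$. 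The only genuinely nontrivial ingredient is (ii); the main obstacle is achieving the density of $C_0^\infty(\Om)$ in the subspace of $H^{s,p}$-distributions supported in $\ov\Om$ across the full sharp range $s>\frac1p-1$, which rests on the Lipschitz cone condition and a Rychkov-style translation-and-mollification scheme.
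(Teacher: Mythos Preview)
The paper does not actually prove this proposition: it simply cites Triebel \cite{Tri02,Tri06} and refers the reader to \cite[Proposition~2.11]{S-Y21-1} for details. Your proposal, by contrast, sketches the standard arguments explicitly --- the Bessel-potential isometry for (i), translation-plus-mollification along the Lipschitz cone for (ii), and abstract subspace/quotient duality together with reflexivity for (iii) --- and these are exactly the ingredients that underlie the cited references. So your approach is consistent with the paper's, only more fleshed out.

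One small remark on your discussion of (ii): the translation-and-mollification scheme you describe in fact establishes the density $\overline{C_0^\infty(\Om)}^{H^{s,p}}=\{f\in H^{s,p}(\R^N):\supp f\subseteq\ov\Om\}$ for \emph{all} $s\in\R$ on Lipschitz domains, not just $s>\frac1p-1$; the argument does not see the boundary-trace obstruction you mention. The restriction $s>\frac1p-1$ in the statement is there because that is the range needed downstream (and is how the result is packaged in the references), not because your mechanism breaks outside it. This does not affect the correctness of your proof. Also note that in (iii) you do not actually need (ii) to identify the annihilator of $H^{s,p}_0(\Om)$: since $H^{s,p}_0(\Om)$ is \emph{defined} as the closure of $C_0^\infty(\Om)$, its annihilator is $\{g:g|_\Om=0\}$ directly, and then reflexivity gives both dualities.
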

	The proof can be done using various results in Triebel \cite{Tri02,Tri06}. The reader can refer to \cite[Proposition 2.11]{S-Y21-1} for details.

	\begin{defn}\label{Defn::DyaRes}
		A \emph{regular dyadic resolution} is a sequence $\phi=(\phi_j)_{j=0}^\infty$ of Schwartz functions, denoted by $\phi\in \mf{R}$, such that
		\begin{itemize}
		\item $\int\phi_0=1$, $\int x^\alpha\phi_0(x)dx=0$ for all $\alpha\in\N^N\backslash\{0\}$.
		\item $\phi_j(x)=2^{Nj}\phi_0(2^jx)-2^{N(j-1)}\phi_0(2^{j-1}x)$, for $j\ge 1$.
		\end{itemize}
		
		A \emph{generalized dyadic resolution} is a sequence $\psi=(\psi_j)_{j=0}^\infty$ of Schwartz functions, denoted by $\psi\in\mf{G }$, such that
		\begin{itemize}
			\item  $\int x^\alpha\psi_1(x)dx=0$ for all $\alpha\in\N^N$.
			\item $\psi_j(x)=2^{N(j-1)}\psi_1(2^{j-1}x)$, for $j\ge1$.
		\end{itemize}
		Here $\psi_0$ can be an arbitrary Schwartz function.
	\end{defn}
	
	\begin{lemma} \label{Lem::K-pair}
		\begin{enumerate}[(i)]
			\item  There is a $g \in \Ss(\R)$ such that $\supp g\subseteq [1,  \infty)$, $\int_{\R} g = 1$ and $\int_{\R} t^k g(t) \, dt =0 $ for all $k \in \Z^+$. 
			\item There exist $\phi \in \Rf$ and $\psi \in \Gf$ such that 
			\begin{itemize} 
				\item 
				$\supp \phi_j$, $\supp \psi_j \subset - \Kb  \cap \{ x_N < -2^{-j} \}$ for all $j \geq 0$. 
				\item
				$\sum_{j=0} \psi_j \ast \phi_j = \del_0 $ is the Dirac delta measure. 
			\end{itemize}            
		\end{enumerate}
	\end{lemma}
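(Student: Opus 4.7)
The plan is to build $g$ in part (i) by a Fourier-analytic argument, then use $g$ as the one-dimensional building block to construct $\phi_0$ and $\psi_1$ on $\R^N$ with cone support, and finally to choose $\psi_0$ so the Calder\'on-type identity in part (ii) holds.

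For part (i), I would pick any $h\in\Ss(\R)$ whose Fourier transform $\widehat h$ is compactly supported and identically $1$ in a neighborhood of the origin; this yields $\int h=1$ and $\int t^k h\,dt=0$ for all $k\ge1$, but $h$ has full support. To move the support into $[1,\infty)$ without breaking the moment conditions, I would write $g=\zeta h+r$, where $\zeta\in C^\infty(\R)$ is a cutoff equal to $1$ on $[1,\infty)$ and $0$ on $(-\infty,0]$, and $r\in\Ss(\R)$ is supported in $[1,\infty)$ and solves the rapidly decaying Stieltjes-type moment problem $\int t^k r\,dt=-\int t^k(1-\zeta)h\,dt$ for $k\ge0$; such $r$ exists by classical half-line moment theory in the Schwartz class.

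For part (ii), fix $\omega\in\Ss(\R^{N-1})$ whose Fourier transform satisfies $\widehat\omega(0)=1$ and is flat at the origin (so $\omega$ has $\int\omega=1$ and annihilates every nonconstant polynomial), rescale $g$ so that $\supp g\subseteq[2,\infty)$, and set $\tilde g(t):=g(-t)$. Choose $\delta>1/4$ and put
\[
\phi_0(x',x_N):=\tilde g\bigl(x_N+\delta|x'|^2\bigr)\,\omega(x').
\]
On the support one has $x_N\le-2-\delta|x'|^2<-|x'|$ since $\delta r^2-r+2>0$ for all $r\ge0$, so $\supp\phi_0\subseteq-\Kb\cap\{x_N<-1\}$; the substitution $u=x_N+\delta|x'|^2$ reduces $\int x^\alpha\phi_0\,dx$ to $(-\delta)^{\alpha_N}\int\omega(x')(x')^{\alpha'}|x'|^{2\alpha_N}dx'$, which vanishes for $\alpha\ne0$ because $\omega$ kills all nonconstant polynomials. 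For $\psi_1$ I repeat the recipe with $g$ replaced by $h_1(t):=g(t)-\tfrac12 g(t/2)$: this $h_1\in\Ss(\R)$ is supported in $[2,\infty)$ and has \emph{every} moment zero, because $g(t)$ and the rescaled copy $\tfrac12 g(t/2)$ share the same moment sequence $(1,0,0,\ldots)$. Then $\psi_1(x):=\tilde h_1(x_N+\delta|x'|^2)\omega(x')$ has every polynomial moment zero and, after dyadic scaling, satisfies $\supp\psi_j\subseteq-\Kb\cap\{x_N<-2^{-j}\}$.

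The main step is then the Calder\'on identity $\sum_{j\ge0}\psi_j\ast\phi_j=\delta_0$, to be enforced by choosing $\psi_0$. Passing to Fourier this reads
\[
\widehat{\psi_0}(\xi)\,\Phi(\xi)+\sum_{j\ge1}\Psi_1(2^{-(j-1)}\xi)\bigl[\Phi(2^{-j}\xi)-\Phi(2^{-(j-1)}\xi)\bigr]=1,
\]
with $\Phi=\widehat{\phi_0}$ and $\Psi_1=\widehat{\psi_1}$. Writing $F(\xi)$ for the sum over $j\ge1$, I would first check that $F\in\Ss(\R^N)$ with $F(0)=0$ and $F$ flat at $0$, both consequences of the flatness of $\Psi_1$ and $\Phi-1$ at the origin together with standard dyadic tail bounds, and then \emph{define} $\widehat{\psi_0}:=(1-F)/\Phi$; smoothness near the origin follows because numerator and denominator both equal $1$ to infinite order, and Schwartz decay at infinity is extracted from the explicit product structure of $\Phi$ built out of $\widehat{\tilde g}$ and $\widehat\omega$. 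The main obstacle is verifying that the inverse Fourier transform $\psi_0$ is again supported in $-\Kb\cap\{x_N<-1\}$: I would obtain this via a Neumann-series / iterative construction in the spirit of Rychkov \cite{Ry99}, exploiting that the cone $-\Kb$ is stable under dyadic dilations and under convolution with cone-supported Schwartz functions, so that the cone support propagates through the solution formula.
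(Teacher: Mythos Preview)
The paper itself does not prove this lemma; it simply cites Rychkov \cite[Theorem~4.1(i) and Proposition~2.1]{Ry99}. Your attempt at a self-contained argument is more ambitious, but several of the steps do not go through as written.

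In part (i), the appeal to ``classical half-line moment theory in the Schwartz class'' is not a result one can simply cite. The sequence $c_k=\int t^k(1-\zeta)h\,dt$ is the moment sequence of a Schwartz function and can grow factorially; producing a Schwartz $r$ supported in $[1,\infty)$ realizing an arbitrary such sequence is itself a nontrivial construction, not folklore. (There is also the minor slip that $\zeta h$ is supported in $[0,\infty)$, not $[1,\infty)$.)

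In part (ii) the Fourier-division step fails. Your claim that $F\in\Ss(\R^N)$ is incorrect: for large $|\xi|$ the dominant contribution to $F(\xi)$ comes from the finitely many indices $j$ with $2^{-j}|\xi|\approx1$, and each such term has size $O(1)$, so $F$ is merely bounded. Indeed, if the identity is to hold with $\psi_0\in\Ss$ then $1-F=\widehat{\psi_0}\,\Phi\in\Ss$, forcing $F\to1$ at infinity, not $F\to0$. Even granting $1-F\in\Ss$, defining $\widehat{\psi_0}=(1-F)/\Phi$ cannot yield a Schwartz function: $\Phi=\widehat{\phi_0}$ is itself Schwartz and hence rapidly decaying, so the quotient blows up at infinity unless $1-F$ happens to decay faster than $\Phi$, and $\Phi$ may in addition vanish away from the origin. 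This is exactly why Rychkov does \emph{not} construct $\psi_0$ by Fourier division. His argument builds $\psi$ from $\phi$ through an explicit algebraic identity (a binomial-type expansion and telescoping) carried out so that every building block is a convolution of cone-supported Schwartz functions, whence cone support is preserved at each step. The ``Neumann-series / iterative construction'' you defer to in the final paragraph is in fact the entire substance of Rychkov's proof, not a detail to be filled in afterward.
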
		
	See \cite[Theorem 4.1(i) and Proposition 2.1]{Ry99}. 
	
	\begin{defn}
		We call $(\phi, \psi)$ which satisfies \rl{Lem::K-pair} a $\Kb$ dyadic pair. 
	\end{defn}

	% For a general bounded Lipschitz domain $\Om$, we can define the extension operator of $\Om$ via patches of special type domain. Take a finite open covering $\bigcup_{j=1}^M U_j \supset b\Om$ such that each $1\le j\le M$ we have an invertible affine linear transform $\Phi_j$ and a special type domain $\Om'_j$ such that $U_j \cap \Om:= U_j\cap\Phi_j(\Om'_j)$. For each $1\le j\le M$ we define $E_jf:=(E_{\Phi_j(\Om'_j)}[f\circ\Phi_j])\circ\Phi_j^{-1}$ where $E_{\Phi_j(\Om'_j)}$ is in \eqref{Eqn::UniExt::HalfPlane}.  Take $\chi_0\in C_c^\infty(\omega)$ and take $\chi_j\in C_c^\infty(U_j)$ for $1\le j\le M$ such that $\sum_{j=0}^M \chi_j^2|_\omega = 1$. We then define 
	% \begin{equation}\label{Eqn::UniExt::BddDomain}
	%     \Ec_{\Om} f  =\chi_0^2f+ \sum_{j=1}^M  \chi_j E_{\Om'_j} (\chi_j f ). 
	% \end{equation}

	\begin{prop} \label{ext_op}
		Let $\Om$ be a bounded Lipschitz domain. Then there exists an extension operator $\Ec$ such that $\Ec: H^{s,p}(\Om) \to H^{s,p} (\R^N)$ is a bounded operator for all $s \in \R$ and $1<p< \infty$. 
	\end{prop}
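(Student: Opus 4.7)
The plan is to carry out a Rychkov-style construction \cite{Ry99} of a universal extension operator, using the $\Kb$ dyadic pair and the Littlewood-Paley characterization gathered in this section. First reduce to the case when $\Omega$ is a special Lipschitz domain $\omega$; the passage to a general bounded Lipschitz domain will be a routine partition-of-unity argument at the end.

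Fix a $\Kb$ dyadic pair $(\phi,\psi)$ from Lemma~\ref{Lem::K-pair}, so $\supp\phi_j,\supp\psi_j\subseteq -\Kb\cap\{x_N<-2^{-j}\}$ and $\sum_j \psi_j\ast\phi_j=\delta_0$. The geometric engine is the property $\omega+\Kb\subseteq\omega$: for $x\in\omega$ and $y\in\supp\phi_j\subseteq -\Kb$, we have $x-y\in x+\Kb\subseteq\omega$. Consequently, for $f\in H^{s,p}(\omega)\subseteq \Ss'(\omega)$, the convolution $(\phi_j\ast f)(x):=\langle f,\phi_j(x-\cdot)\rangle$ is well-defined for $x\in\omega$ using only the germ of $f$ on $\omega$, and similarly for $\psi_j\ast g$. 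Define
\[
\Ec f := \sum_{j=0}^\infty \psi_j\ast\bigl(\mathbf{1}_\omega\cdot(\phi_j\ast f)\bigr),
\]
where $\mathbf{1}_\omega\cdot(\phi_j\ast f)$ means extension from $\omega$ to $\R^N$ by zero. For $x\in\omega$, the cone property applied to $\psi_j$ shows that $\mathbf{1}_\omega(x-y)=1$ whenever $y\in\supp\psi_j$, so the indicator disappears and $(\Ec f)|_\omega=\sum_j\psi_j\ast\phi_j\ast f=\delta_0\ast f=f$. Thus $\Ec$ is genuinely an extension operator, independent of any ambient representative of $f$.

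For boundedness, apply Proposition~\ref{Prop::L-P} with a regular dyadic resolution $(\la_k)$. Because the Fourier supports of $\la_k$ and $\widehat{\psi_j}$ live on essentially disjoint dyadic annuli, $\la_k\ast\psi_j$ is negligible unless $|k-j|$ is bounded by a universal constant. This yields
\[
\|\Ec f\|_{H^{s,p}(\R^N)}\ \lesssim\ \Bigl\|\Bigl(\sum_{j=0}^\infty 2^{2js}\bigl|\mathbf{1}_\omega\cdot(\phi_j\ast f)\bigr|^2\Bigr)^{1/2}\Bigr\|_{L^p(\R^N)},
\]
and the indicator is harmless on the $L^p$ side since it enters only as a pointwise cutoff outside the square function. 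Pick any extension $\tilde f\in H^{s,p}(\R^N)$ with $\|\tilde f\|_{H^{s,p}(\R^N)}\le 2\|f\|_{H^{s,p}(\omega)}$. By the same support argument, $(\phi_j\ast f)(x)=(\phi_j\ast \tilde f)(x)$ for $x\in\omega$, so the last display is bounded by Rychkov's discrete characterization of $\Fs^s_{p2}(\R^N)$ using the non-standard resolution $(\phi_j)$, giving $\|\Ec f\|_{H^{s,p}(\R^N)}\lesssim\|\tilde f\|_{H^{s,p}(\R^N)}\lesssim\|f\|_{H^{s,p}(\omega)}$.

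For a general bounded Lipschitz $\Omega$, cover $b\Omega$ by finitely many neighborhoods $U_1,\dots,U_M$ on which $\Omega$ is affinely equivalent to a special Lipschitz domain, adjoin an interior patch $U_0\Subset\Omega$, and take a subordinate smooth partition of unity $\{\chi_i\}$. Apply the construction above in each boundary chart to get local extensions $\Ec_i$, and glue via $\Ec f:=\chi_0\tilde f+\sum_{i=1}^M\chi_i\,\Ec_i(f|_{\Omega\cap U_i})$. Boundedness is preserved because multiplication by a smooth compactly supported function and affine change of variables act boundedly on $H^{s,p}(\R^N)$ for every $s\in\R$ and $1<p<\infty$. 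The main obstacle throughout is the Littlewood-Paley equivalence used in Step~3: one must show that the non-smooth analyzer $\phi$ gives an equivalent Triebel-Lizorkin norm for \emph{arbitrarily negative} $s$, which rests on a Peetre-type maximal function estimate whose validity depends crucially on the infinite-order moment vanishing of $\psi_1$ (cf.\ the definition of $\Gf$).
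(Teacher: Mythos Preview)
The paper does not give a proof here; it simply cites \cite[Theorem~4.1(ii)]{Ry99}. Your outline is precisely Rychkov's construction, so you are on the same path the paper invokes.

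There is one concrete slip. You write that ``the Fourier supports of $\la_k$ and $\widehat{\psi_j}$ live on essentially disjoint dyadic annuli, so $\la_k\ast\psi_j$ is negligible unless $|k-j|$ is bounded.'' This is false for the $\Kb$ dyadic pair: the $\psi_j$ are supported in $-\Kb$ in \emph{physical} space, so $\widehat{\psi_j}$ is entire and has no annular Fourier support, and $\la_k\ast\psi_j$ never vanishes identically. The almost-orthogonality you need is real, but it comes from the infinite-order moment vanishing of $\psi_1$ and $\la_1$ via the Heideman-type estimate (Lemma~\ref{Heideman}), giving $\|\la_k\ast\psi_j\|$ decaying like $2^{-M|j-k|}$ rather than being zero. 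You correctly name this mechanism in your last sentence, so the repair is already in hand: replace the Fourier-support reasoning by the Heideman/Peetre argument, and note that passing from the almost-orthogonality to the square-function bound also requires the Peetre maximal operators of Proposition~\ref{Peetre} (or Fefferman--Stein), not merely $L^1$ control of $\la_k\ast\psi_j$.
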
 
	See \cite[Theorem 4.1(ii)]{Ry99}. 
	
	\begin{prop} \label{Prop::C3} 
		Let $\Om$ be a bounded Lipschitz domain. Denote $\delta(x)=\dist(x,b\Omega)$. 
		Then for any $s \geq 0 $ and $1<p<\infty$ there is a $C=C_{s,p,\Omega}>0$ such that 
		$\| \delta^{-s}f\|_{L^p(\Omega)}\leq C  \|f\|_{H^{s,p}_0(\Om)}$. 
	\end{prop}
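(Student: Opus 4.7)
The plan is to prove this Hardy-type inequality in two stages: first establishing it for non-negative integers via the classical Hardy inequality, and then obtaining the general case by complex interpolation.

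For the integer case $s = k \in \mathbb{N}$, I would start with the trivial observation that $H^{0,p}_0(\Omega) = L^p(\Omega)$ handles $s = 0$. For $s = 1$, the classical Hardy inequality on bounded Lipschitz domains gives $\|\delta^{-1} f\|_{L^p(\Omega)} \leq C\|\nabla f\|_{L^p(\Omega)} \leq C\|f\|_{W^{1,p}_0(\Omega)}$ whenever $f \in W^{1,p}_0(\Omega) = H^{1,p}_0(\Omega)$; this is the Hardy inequality due to Ne\v{c}as/Kufner for Lipschitz domains. For higher integer $k \geq 2$, I would iterate: if $f \in W^{k,p}_0(\Omega)$, then $D^\alpha f \in W^{k-|\alpha|,p}_0(\Omega)$ for all $|\alpha| \leq k$, and by repeatedly applying the $s=1$ case one obtains
\[
\|\delta^{-k}f\|_{L^p(\Omega)} \leq C\|\delta^{-(k-1)}\nabla f\|_{L^p(\Omega)} \leq \cdots \leq C\|\nabla^k f\|_{L^p(\Omega)} \leq C\|f\|_{W^{k,p}_0(\Omega)}.
\]

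For general non-integer $s \geq 0$, I would use complex interpolation. Fix integers $k_0, k_1$ with $k_0 \leq s \leq k_1$ (e.g.\ $k_0 = \lfloor s\rfloor$, $k_1 = \lfloor s\rfloor + 1$), and set $\theta = (s-k_0)/(k_1-k_0)$. From Triebel's interpolation theory (\cite{Tri02,Tri06}, applied also in \cite{S-Y21-1}) one has $[H^{k_0,p}_0(\Omega), H^{k_1,p}_0(\Omega)]_\theta = H^{s,p}_0(\Omega)$, with the possible exceptional exponents $s = m + \frac{1}{p}$ handled by adjusting $k_0, k_1$ or by a standard perturbation argument. For the target side, complex interpolation of weighted Lebesgue spaces yields $[L^p(\Omega, \delta^{-k_0 p}\,dV), L^p(\Omega, \delta^{-k_1 p}\,dV)]_\theta = L^p(\Omega, \delta^{-sp}\,dV)$. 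The identity map $\iota\colon f \mapsto f$ is bounded from $H^{k_i,p}_0(\Omega)$ into $L^p(\Omega, \delta^{-k_i p}\,dV)$ for $i = 0,1$ by the integer case, so by interpolation $\iota\colon H^{s,p}_0(\Omega) \to L^p(\Omega, \delta^{-sp}\,dV)$ is bounded, which is exactly the claimed inequality.

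The main obstacle is justifying the interpolation identity $[H^{k_0,p}_0(\Omega), H^{k_1,p}_0(\Omega)]_\theta = H^{s,p}_0(\Omega)$ cleanly at the exceptional half-integer values $s \in \mathbb{Z} + \frac{1}{p}$, where the $H^{s,p}_0$ and $H^{s,p}$ scales display boundary phenomena. A robust workaround is to bypass the interpolation of the spaces themselves and instead apply Stein's complex interpolation directly to the analytic family of operators $T_z f := \delta^{-z} f$ acting on a fixed common dense subspace (e.g.\ $C^\infty_c(\Omega)$): for $\operatorname{Re} z = k_0$ or $k_1$, the bound $\|T_z f\|_{L^p(\Omega)} = \|\delta^{-\operatorname{Re} z} f\|_{L^p(\Omega)} \leq C\|f\|_{H^{\operatorname{Re} z,p}_0(\Omega)}$ holds uniformly in $\operatorname{Im} z$ since $|\delta^{-i\,\operatorname{Im} z}| = 1$, and a density argument extends the resulting bound at $\operatorname{Re} z = s$ to all of $H^{s,p}_0(\Omega)$.
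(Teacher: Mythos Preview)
The paper does not actually prove this proposition; it simply cites \cite[Proposition~5.6]{S-Y21-1}. So there is no ``paper's own proof'' to compare against in detail. Your outline (classical Hardy for integer exponents, then complex interpolation) is a standard and essentially correct route, but two points need tightening.

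\medskip
\textbf{The integer step.} The chain
\[
\|\delta^{-k}f\|_{L^p(\Omega)} \le C\|\delta^{-(k-1)}\nabla f\|_{L^p(\Omega)} \le \cdots \le C\|\nabla^k f\|_{L^p(\Omega)}
\]
is correct, but each link is a \emph{weighted} Hardy inequality, not a consequence of ``repeatedly applying the $s=1$ case''. Applying the unweighted Hardy bound to $g=\delta^{-(k-1)}f$ does not close: the commutator term $\nabla(\delta^{-(k-1)})\cdot f$ reproduces $\|\delta^{-k}f\|_{L^p}$ on the right with an uncontrolled constant. What you actually need is the one-dimensional Hardy inequality with power weights,
\[
\int_0^\infty t^{-kp}|F(t)|^p\,dt \le C_{k,p}\int_0^\infty t^{-(k-1)p}|F'(t)|^p\,dt,\qquad F(0)=0,\ k>1/p,
\]
applied in the normal direction after localizing to a special Lipschitz graph (e.g.\ Kufner's treatment of weighted Sobolev spaces). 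This is classical, but it is a separate ingredient, not an iteration of the $k=1$ statement.

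\medskip
\textbf{The interpolation step.} Your proposed ``workaround'' via Stein interpolation of the analytic family $T_z f=\delta^{-z}f$ does not actually bypass anything: Stein's theorem still only gives boundedness from $[H^{k_0,p}_0,H^{k_1,p}_0]_\theta$ into $[L^p,L^p]_\theta=L^p$, so you must identify the domain interpolation space anyway. Fortunately the direct route works cleanly here. Since $k_0,k_1\in\mathbb{N}$ and $s\ge0$ all satisfy $>\frac1p-1$, Proposition~\ref{Prop::DualSpace}\,\ref{H_0space} identifies each $H^{\,\cdot,p}_0(\Omega)$ with $\widetilde H^{\,\cdot,p}(\Omega)=\{g\in H^{\,\cdot,p}(\R^N):g|_{\overline\Omega^c}=0\}$, and these $\widetilde H$-spaces form a complex interpolation scale on bounded Lipschitz domains (retraction--coretraction via Rychkov's universal extension on the exterior domain; cf.\ \cite{Tri02}). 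So the identity $[H^{k_0,p}_0(\Omega),H^{k_1,p}_0(\Omega)]_\theta=H^{s,p}_0(\Omega)$ holds for all $0\le k_0<s<k_1$ without exceptional values, and together with $[L^p(\delta^{-k_0p}),L^p(\delta^{-k_1p})]_\theta=L^p(\delta^{-sp})$ your argument closes. Drop the Stein paragraph and state this interpolation fact directly.
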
	
	%We are going to prove that $E_{\omega}$ has some smoothing effect in $\omega^c$. The result can somehow recover Proposition \ref{Prop::ext_op} \ref{Item::ext_op::Bdd}. See Theorem ?? and Remark ??.
{This is a standard result. For example, the reader may refer to \cite[Proposition 5.6]{S-Y21-1} for a proof. The strategy is to first prove the statement for integer $s$, and then use complex interpolation for non-integer $s$. We note that a similar result was proved in \cite[Theorem 1.4.4.4]{GrisBook} for Sobolev-Slobodeckij space $W^{s,p}=\Bs_{pp}^s$. The proof for \rp{Prop::C3} when $\Om$ is smooth can be found in \cite[Chapter 5.8]{TriebelStructureFunctions}.}

	\section{The Anti-derivative Operator}\label{Section3}
	In this section we construct the ``anti-derivative'' operators that make a distribution more regular in the Sobolev space, while preserving the vanishing condition on bounded Lipschitz domains. The proof uses an estimate for the Peetre's maximal operators. 
	\begin{defn} 
		Let $M > 0$ and let $\eta = (\eta_j)_{j=0}^{\infty}$ be a sequence of Schwartz functions. The associated \emph{Peetre maximal operators} $\Pc_j^{\eta, M}$ to $\eta$ is given by
		\[
		\Pc^{\eta, M}_j f (x) := \sup_{y \in \R^N} \frac{|(\eta_j \ast f) (x-y)|}{(1+2^j |y|)^M}, \quad f \in \Ss' (\R^N), \quad x \in \R^N. 
		\]  
	\end{defn} 
	
	\begin{prop}[Peetre's maximal estimate] \label{Peetre} 
		Let $0 < p < \infty$. Let $s \in \R$, $M >N / \min(p,2)$, and $\eta = (\eta_j)_{j=0}^{\infty} \in \Gf $. Then there is a $C= C_{p, q,s,M, \eta} >0$ such that 
		\begin{equation}
		\| (2^{js} \Pc_j^{\eta, M} f )_{j=0}^{\infty} \|_{L^p(\ell^2)}
		\leq C \|  f \|_{H^{s,p}(\R^N)}.   
		\end{equation}
	\end{prop}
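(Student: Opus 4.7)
The plan is to use the Triebel-Lizorkin identification $H^{s,p}(\R^N)=\Fs^s_{p2}(\R^N)$ from \rp{Prop::L-P} and prove the bound in terms of a Littlewood-Paley norm of $f$. I would fix a $\Kb$ dyadic pair $(\phi,\psi)$ provided by \rl{Lem::K-pair}, giving the reproducing identity $f=\sum_{k\ge 0}\psi_k\ast\phi_k\ast f$ in $\Ss'(\R^N)$. Convolving with $\eta_j$ and associating yields
\[
\eta_j\ast f\;=\;\sum_{k\ge 0}(\eta_j\ast\psi_k)\ast(\phi_k\ast f).
\]

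The first technical step is an almost-orthogonality kernel bound: for any $L,M'>0$,
\[
|(\eta_j\ast\psi_k)(z)|\;\le\;C_{L,M'}\,2^{-L|j-k|}\,2^{N\min(j,k)}\bigl(1+2^{\min(j,k)}|z|\bigr)^{-M'}.
\]
This is the standard ``Taylor expansion against cancellation'' argument: when $j,k\ge 1$, one expands the smoother factor to order $L$ at the point of evaluation, uses the infinite-order moment vanishing of the other factor to kill the polynomial part, and estimates the remainder via rapid Schwartz decay. The boundary cases $j=0$ or $k=0$ give only $O(1)$ terms and require no cancellation. Combining with the elementary inequality $(1+2^k|y+z|)^{M'}\le 2^{M'(k-j)_+}(1+2^k|z|)^{M'}(1+2^j|y|)^{M'}$ and absorbing the denominator $(1+2^j|y|)^M$, the resulting convolution is controlled by $\Pc^{\phi,M}_k f(x)$ (taking $M'=M+N+1$), yielding the pointwise estimate
\[
\Pc^{\eta,M}_j f(x)\;\lesssim\;\sum_{k\ge 0}2^{-L|j-k|}\,2^{M(k-j)_+}\,\Pc^{\phi,M}_k f(x).
\]
Choosing $L$ larger than $M+|s|+1$, multiplying by $2^{js}$, and applying Young's inequality in $j$ (the resulting Schur kernel is bounded on $\ell^2$) reduces the claim to
\[
\bigl\|(2^{ks}\Pc^{\phi,M}_k f)_k\bigr\|_{L^p(\ell^2)}\;\lesssim\;\|f\|_{H^{s,p}(\R^N)}.
\]

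The main obstacle is this last estimate, where the hypothesis $M>N/\min(p,2)$ becomes essential. Since $\phi_k\ast f$ has Fourier support in $\{|\xi|\lesssim 2^k\}$, the Plancherel-Polya sub-mean-value inequality for band-limited functions gives
\[
\Pc^{\phi,M}_k f(x)\;\le\;C\,\bigl(\Mc(|\phi_k\ast f|^r)(x)\bigr)^{1/r}\qquad\text{whenever }Mr>N,
\]
where $\Mc$ denotes the Hardy-Littlewood maximal operator. The hypothesis on $M$ permits a choice of $r<\min(p,2)$ with $Mr>N$; the Fefferman-Stein vector-valued maximal inequality on $L^{p/r}(\ell^{2/r})$, whose applicability requires precisely $p/r,\,2/r>1$, then converts the right-hand side into the $\Fs^s_{p2}$-norm of $f$, which by \rp{Prop::L-P} (combined with the equivalence of admissible resolutions) is comparable to $\|f\|_{H^{s,p}(\R^N)}$. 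The delicate point is the simultaneous use of almost-orthogonality and the Plancherel-Polya bound: each ingredient is classical, but careful bookkeeping is needed to track the dependence of constants on $s$, $M$, and $p$, and to ensure the off-diagonal decay beats the weighting by $2^{js}$ and the growth factor $2^{M(k-j)_+}$.
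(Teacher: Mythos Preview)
The paper does not actually prove this proposition; it simply cites Peetre's original paper (with a correction) and \cite[Theorem 5.1(i)]{BPT} for the non-homogeneous case. Your outline follows the classical route used in those references (almost-orthogonality plus Plancherel--Polya plus Fefferman--Stein), so the overall strategy is the right one.

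There is, however, a genuine gap in your implementation. You choose $(\phi,\psi)$ to be a $\Kb$ dyadic pair from \rl{Lem::K-pair} and then assert that ``$\phi_k\ast f$ has Fourier support in $\{|\xi|\lesssim 2^k\}$''. This is false: by construction $\phi_k$ is supported in the spatial cone $-\Kb\cap\{x_N<-2^{-k}\}$, so by Paley--Wiener $\widehat\phi_k$ cannot be compactly supported. Consequently $\phi_k\ast f$ is \emph{not} band-limited, and the Plancherel--Polya sub-mean-value inequality $\Pc^{\phi,M}_k f\lesssim (\Mc(|\phi_k\ast f|^r))^{1/r}$ is not available. Without this step you cannot invoke Fefferman--Stein, and the reduction collapses.

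The repair is straightforward: run the entire argument with the Littlewood--Paley family $\lambda=(\lambda_j)_{j\ge0}$ of \rp{Prop::L-P} in place of $(\phi,\psi)$. One has $\sum_j\widehat\lambda_j\equiv1$, hence $f=\sum_k\lambda_k\ast f$ in $\Ss'(\R^N)$ and $\eta_j\ast f=\sum_k(\eta_j\ast\lambda_k)\ast f$. The almost-orthogonality bound for $\eta_j\ast\lambda_k$ holds by the same Taylor-expansion argument (indeed more easily, since $\lambda_k$ has Fourier support in a dyadic shell), and now $\lambda_k\ast f$ genuinely has Fourier support in $\{|\xi|\le 2^{k+1}\}$, so Plancherel--Polya applies and your final paragraph goes through verbatim. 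The $\Kb$ dyadic pair plays no role in this proposition; it is needed elsewhere in the paper for support-preservation, not here.
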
 
	
	\begin{proof} See \cite{Peetre} (and  \cite{PeetreCorrection} for a correction) when $\eta$ is taken to be a ``homogeneous Littlewood-Paley family''. In our case we consider the  ``non-homogeneous case'' and the results can be found in, for example \cite[Theorem 5.1(i)]{BPT}. 
	\end{proof}
	
	\begin{lemma}[Heideman type estimate]\label{Heideman}
		Let $\phi,\psi\in\Gf$ be two generalized dyadic resolutions on $\R^N$. Then for any $M,J>0$ there is a $C=C_{\phi,\psi, M, J}>0$ such that
		\begin{equation*} 
		\int|\phi_j\ast\psi_k(x)|(1+2^{\max(j,k)}|x|)^J dx\le C 2^{-M|j-k|}, \quad \forall \, j, k \in \mathbb{N}.
		\end{equation*}
	\end{lemma}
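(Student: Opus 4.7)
The plan is to exploit the infinite moment vanishing of $\psi_k$ (and of $\phi_j$) for indices $\ge 1$ via a Taylor expansion argument, the standard ``almost orthogonality'' trick. By the symmetry of the statement in $j$ and $k$ I may assume $j \le k$. The boundary case $j=k=0$ reduces to showing $\phi_0\ast\psi_0$ is Schwartz, which is immediate. The case $j=0$, $k\ge 1$ is parallel to the main case once we use moment vanishing of $\psi_k$ and replace $\phi_j$ by the fixed Schwartz function $\phi_0$ in the estimates; I'll focus on $1\le j\le k$.

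For the main case, Taylor-expand $\phi_j(x-y)$ in $y$ around $0$ to order $L-1$:
\[
\phi_j(x-y)=\sum_{|\alpha|<L}\frac{(-y)^\alpha}{\alpha!}D^\alpha\phi_j(x)+R_L(x,y),
\]
with $R_L$ a Lagrange remainder of order $L$. Since $\psi_k$ for $k\ge 1$ has vanishing moments of \emph{all} orders, the polynomial part integrates to zero against $\psi_k$, so
\[
\phi_j\ast\psi_k(x)=\int R_L(x,y)\,\psi_k(y)\,dy.
\]
From $\phi_j(z)=2^{N(j-1)}\phi_1(2^{j-1}z)$ and the Schwartz decay of $D^\alpha\phi_1$, I get $|D^\alpha\phi_j(z)|\lesssim 2^{(N+L)j}(1+2^{j-1}|z|)^{-Q}$ for any $Q>0$. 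Combined with the elementary inequality $(1+|a-b|)^{-Q}\le(1+|b|)^Q(1+|a|)^{-Q}$, this gives
\[
|R_L(x,y)|\lesssim |y|^L\,2^{(N+L)j}(1+2^{j-1}|x|)^{-Q}(1+2^{j-1}|y|)^Q.
\]

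Using the Schwartz bound $|\psi_k(y)|\lesssim 2^{Nk}(1+2^{k-1}|y|)^{-Q'}$ and the change of variable $z=2^k y$ in the resulting $y$-integral (where $k\ge j$ ensures $2^{j-1-k}\le 2^{-1}$), one concludes
\[
|\phi_j\ast\psi_k(x)|\lesssim 2^{Nj}\,2^{-(k-j)L}(1+2^{j-1}|x|)^{-Q},
\]
provided $Q'$ is chosen large compared to $Q+L+N$. Finally, estimate the weighted integral by $(1+2^k|x|)^J\le 2^{(k-j+1)J}(1+2^{j-1}|x|)^J$ and integrate $(1+2^{j-1}|x|)^{J-Q}$ over $\R^N$, which contributes a factor $2^{-jN}$ cancelling $2^{Nj}$. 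Choosing $Q>N+J$ and $L>J+M$ then yields
\[
\int |\phi_j\ast\psi_k(x)|(1+2^k|x|)^J\,dx\lesssim 2^{-(k-j)(L-J)}\le C\,2^{-M(k-j)}.
\]

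The main obstacle is just bookkeeping: one must thread $Q$, $Q'$, and $L$ through the Taylor estimate, the $y$-integral, and the final weighted $x$-integral while keeping the constants independent of $j,k$. The key observation that unlocks the whole argument is that because \emph{both} $\phi,\psi\in\Gf$ and all frequencies (for index $\ge 1$) have infinite moment vanishing, we are free to take $L$ as large as desired; no sharp moment condition is being used, only the existence of moment cancellation to arbitrary order.
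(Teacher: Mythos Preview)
Your argument is correct; the Taylor-expansion-plus-moment-vanishing computation is the standard almost-orthogonality proof of such estimates, and the bookkeeping you outline (choosing $L>M+J$, $Q>N+J$, $Q'>Q+L+N$) goes through exactly as stated. The paper does not give its own proof but simply cites \cite[Lemma~2.1]{B-P-T96}, so your direct argument is precisely the content one would find in that reference.
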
 
	\begin{proof}
		This is a special case of \cite[Lemma 2.1]{B-P-T96}.  
	\end{proof}

	\begin{prop} \label{T_prop}  
		Let $r\in\R$. Fix $\phi,\psi \in \Gf$. We define the operator $T^{r}$ by 
		\begin{equation}\label{Eqn::ExtSmooth::DefT}
		T^{r}f:=\sum_{j=0}^\infty 2^{jr}\psi_j \ast \phi_j \ast f, \quad f \in \Ss. 
		\end{equation}
		Then for any $s \in \R$, $1 \leq p < \infty$, there is a $C = C(\phi, \psi, p,r, s) $ such that 
		\begin{equation}\label{T_est}
		\| T^r f \|_{H^{s-r,p} (\R^N)} \leq C  \|  f \|_{H^{s,p}(\R^N)}. 
		\end{equation}  
	\end{prop}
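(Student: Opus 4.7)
The plan is to use the Littlewood--Paley characterization of $H^{s,p}$ from \rp{Prop::L-P}, reducing the boundedness of $T^r$ to a Peetre-type maximal estimate combined with the almost-orthogonality afforded by \rl{Heideman}. Fix $f\in\Ss(\R^N)$; since $\Ss$ is dense in $H^{s,p}(\R^N)$, it suffices to prove \re{T_est} for such $f$. Choose a Littlewood--Paley family $\la=(\la_k)_{k\ge0}$ as in \rp{Prop::L-P}. Writing $\la_1(x)=2^N\la_0(2x)-\la_0(x)$ one checks $\la\in\Gf$, since $\widehat{\la_1}$ vanishes to infinite order at the origin. The vanishing moments of $\phi_j,\psi_j$ furthermore guarantee that the partial sums of \re{Eqn::ExtSmooth::DefT} converge in $\Ss'(\R^N)$ when $f\in\Ss$.

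Convolving \re{Eqn::ExtSmooth::DefT} termwise with $\la_k$ and inserting the Peetre maximal function, the elementary bound
\[
|(\la_k\ast\psi_j)\ast(\phi_j\ast f)(x)|\le \Pc_j^{\phi,M}f(x)\cdot\int_{\R^N}|\la_k\ast\psi_j(y)|(1+2^j|y|)^M\,dy,
\]
combined with $(1+2^j|y|)^M\le(1+2^{\max(j,k)}|y|)^M$ and \rl{Heideman} (applied to $(\la,\psi)\in\Gf\times\Gf$), gives for any preassigned $L>0$,
\[
|(\la_k\ast\psi_j)\ast(\phi_j\ast f)(x)|\ls 2^{-L|j-k|}\Pc_j^{\phi,M}f(x).
\]
Multiplying by $2^{k(s-r)}2^{jr}$ and using the identity $2^{k(s-r)}2^{jr}2^{-js}=2^{(k-j)(s-r)}$, summing in $j$ gives pointwise in $x$,
\[
2^{k(s-r)}|\la_k\ast T^r f(x)|\ls \sum_{j=0}^\infty 2^{(k-j)(s-r)-L|k-j|}\bigl(2^{js}\Pc_j^{\phi,M}f(x)\bigr).
\]
Fixing $L>|s-r|$, the kernel $m\mapsto 2^{m(s-r)-L|m|}$ lies in $\ell^1(\Z)$, so Young's convolution inequality $\ell^1\ast\ell^2\hra\ell^2$ applied pointwise in $x$ bounds $\bigl\|(2^{k(s-r)}|\la_k\ast T^r f(x)|)_k\bigr\|_{\ell^2}$ by a constant times $\bigl\|(2^{js}\Pc_j^{\phi,M}f(x))_j\bigr\|_{\ell^2}$. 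Taking $L^p$-norms and fixing $M>N/\min(p,2)$, the left-hand side is comparable to $\|T^r f\|_{H^{s-r,p}(\R^N)}$ by \rp{Prop::L-P}, while the right-hand side is controlled by $\|f\|_{H^{s,p}(\R^N)}$ thanks to \rp{Peetre}. This is the desired estimate \re{T_est}.

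The main obstacle I anticipate is the treatment of the boundary indices $j=0$ and $k=0$: here $\psi_0$ and $\la_0$ need not possess any vanishing moments, so the geometric decay $2^{-L|j-k|}$ is not a transparent consequence of naive integration-by-parts on one side. Fortunately \rl{Heideman} is stated uniformly for all pairs $(j,k)\in\N\times\N$, the decay being supplied by the vanishing moments of whichever of $\phi_j,\psi_k$ (or $\la_k,\psi_j$) sits at the higher frequency scale. Granted this estimate, the remainder of the argument is essentially an exercise with geometric series and the extension from $\Ss(\R^N)$ to $H^{s,p}(\R^N)$ is by density; the endpoint $p=1$, if wanted, needs to be interpreted via the Triebel--Lizorkin scale since \rp{Prop::L-P} is formally stated for $1<p<\infty$.
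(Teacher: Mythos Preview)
Your proof is correct and follows essentially the same approach as the paper's: Littlewood--Paley characterization via \rp{Prop::L-P}, almost-orthogonality from \rl{Heideman}, insertion of the Peetre maximal function, Young's inequality in $\ell^2$, and then \rp{Peetre}. The only differences are cosmetic---your indices $(j,k)$ are the paper's $(k,j)$, and you keep the Heideman decay exponent $L$ separate from the Peetre exponent $M$, which is slightly cleaner than the paper's choice $M=|s-r|+1$ followed by ``$M>N$''.
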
 
	\begin{proof}
		Using the Littlewood-Paley characterization of Sobolev norm, we can write \re{T_est} as 
		\begin{equation} \label{T_est2}
		\left\| \left( 2^{j (s-r) } \la_j \ast \sum_{k=0}^{\infty} 2^{kr} \psi_k \ast \phi_k \ast f  \right)_{j=0}^{\infty} \right\|_{L^p (\ell^2)} 
		\leq C \| ( 2^{js} \la_j \ast f )_{j=0}^{\infty} \|_{L^p(\ell^2)}. 
		\end{equation}
		Fixing $j$ on the left-hand side of \re{T_est2}, we apply \rl{Heideman} to get 
		\begin{align*} 
		2^{j(s-r) } 2^{kr} | \la_j \ast \psi_k \ast \phi_k  \ast f (x) | 
		&\leq 2^{j(s-r)} 2^{kr} \int_{\R^N} |\la_j \ast  \psi_k (x-y) | | \phi_k \ast f (y)| \, dy
		\\ &\leq 2^{j(s-r)} 2^{kr} \int_{\R^N}  |\la_j \ast  \psi_k (x-y) | (1+ 2^k |x-y|)^M  
		\sup_{t \in \R^N} \frac{ | \phi_k \ast  f (t) |}{(1+ 2^{k} | x-t |)^M } \, dy
		\\&\lesssim 2^{j(s-r)} 2^{kr}  2^{-M |j-k|} ( \Pc_k^{\phi, M} f ) (x)
		\\&=2^{k (s-r)} 2^{(j-k) ( s-r) } 2^{kr} 2^{-M|j-k| } ( \Pc_k^{\phi, M} f ) (x)
		\\&= 2^{ks} 2^{(j-k) (s-r)} 2^{-M|j-k|} ( \Pc_k^{\phi, M} f ) (x)
		\leq 2^{-|j-k| }  2^{ks} ( \Pc_k^{\phi, M} f ) (x), 
		\end{align*} 
		where in the last inequality we choose $M=| s-r|+ 1$. 
		It follows that 
		\begin{align} \label{T_est_LPjterm}  
		2^{j(s-r)} | \la_j \ast T^r f (x) | 
		&\leq \sum_{k=0}^{\infty} 2^{j (s-r)}   2^{kr} | \la_j \ast \psi_k \ast \phi_k  \ast f (x) | 
		\\ \nn &\leq \sum_{k=0}^{\infty} 2^{-|j-k|} 2^{ks} (\Pc_k^{\phi, M} f)(x). 
		\end{align}
		Applying Young's inequality and \rp{Peetre} with $M>N$ we have 
		\begin{align} \label{T_est_LPjterm2} 
		\left\| \left( \sum_{k=0}^{\infty} 2^{-|j-k|} 2^{ks} (\Pc_k^{\phi, M} f)  \right)_{j=0}^{\infty} \right\|_{L^p (\ell^2)} 
		&\lesssim \left\| \left(2^{ks} \Pc_k^{\phi, M} f \right)_{k=0}^{\infty} \right\|_{L^p(\ell^2)} 
		\lesssim  \|  f  \|_{H^{s,p} (\R^N)}. 
		\end{align} 
		Combinining \re{T_est_LPjterm} and \re{T_est_LPjterm2} we obtain \re{T_est2}.
	\end{proof}

	\begin{lemma}\label{1d_antider} 
		Suppose $u \in \Ss_0(\R)$ is supported in $\R^+$ and $\int_{\R} u=0$. Then $(2\pi i\xi)^{-1}\widehat u(\xi)$ is the Fourier transform of the function $v(x):=\int_{-\infty}^x u(s)ds=\int_{\R_+}u(x-s)ds$. In addition, $v \in \Ss_0( \R)$ and $v$ is also supported in $\R^+$. 
	\end{lemma}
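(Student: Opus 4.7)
The plan is to verify the four claims in the order: (1) $v$ vanishes on $(-\infty,0]$; (2) $v \in \Ss(\R)$; (3) $\widehat v(\xi) = \widehat u(\xi)/(2\pi i \xi)$; (4) all moments of $v$ vanish, equivalently $v \in \Ss_0(\R)$.

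For (1), since $\supp u \seq \R^+$, the integrand in $v(x) = \int_{-\infty}^x u(s)\,ds$ is identically zero whenever $x \leq 0$, so $v(x) = 0$ there and $\supp v \seq \overline{\R^+}$. For (2), the hypothesis $\int_\R u = 0$ lets us rewrite $v(x) = -\int_x^\infty u(s)\,ds$ for $x \geq 0$. Because $u$ is Schwartz, for every $M$ we have $|u(s)| \lesssim_M (1+|s|)^{-M}$, so
\[
|v(x)| \le \int_x^\infty |u(s)|\,ds \ls_M (1+x)^{-(M-1)}, \qquad x \geq 0,
\]
which, combined with $v \equiv 0$ on $(-\infty,0]$, gives rapid decay of $v$ itself. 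For $k \geq 1$ the identity $v^{(k)} = u^{(k-1)}$ shows each derivative of $v$ is Schwartz, so $v \in \Ss(\R)$.

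For (3), both $u$ and $v$ are now Schwartz, so we may take Fourier transforms of $v' = u$ with the paper's convention $\widehat g(\xi) = \int g(x)e^{-2\pi i x\xi}\,dx$ to get $2\pi i \xi\, \widehat v(\xi) = \widehat u(\xi)$. Dividing gives $\widehat v(\xi) = \widehat u(\xi)/(2\pi i\xi)$ for $\xi \neq 0$; at $\xi = 0$ the quotient extends smoothly, since $\widehat u(0) = \int_\R u = 0$ and $\widehat u$ is smooth. For (4), the hypothesis $u \in \Ss_0$ means $\widehat u(\xi) = O(|\xi|^N)$ as $\xi \to 0$ for every $N$; dividing by $2\pi i\xi$ preserves this, so $\widehat v(\xi) = O(|\xi|^{N-1})$ for every $N$, i.e.\ $v \in \Ss_0(\R)$.

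The only step that is more than bookkeeping is the decay estimate on $v$ in (2): without the cancellation $\int u = 0$, the primitive $v(x)$ would in general tend to a nonzero constant as $x \to +\infty$ and fail to lie in $\Ss$. Thus the hypothesis $\int u = 0$ is used crucially to convert $v$ from a forward integral into a tail integral of the rapidly decaying function $u$, and everything else follows from routine properties of the Fourier transform and of $\Ss_0$.
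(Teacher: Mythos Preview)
Your proof is correct and takes a genuinely different route from the paper. The paper argues in the reverse order: it first observes that $(2\pi i\xi)^{-1}\widehat u \in \Ss_0(\R)$ directly from the behavior of $\widehat u$ at $0$ and $\infty$, and then identifies its inverse Fourier transform with $v$ by computing $\big(\operatorname{p.v.}(2\pi i\xi)^{-1}\big)^\vee = \tfrac12\operatorname{sgn}$ and convolving, using $\int u = 0$ to discard the constant term. Your approach instead establishes $v \in \Ss(\R)$ directly via the tail-integral representation and $v^{(k)} = u^{(k-1)}$, then simply takes the Fourier transform of the identity $v' = u$. Your route is more elementary in that it avoids the principal-value distribution entirely; the paper's route has the small advantage that membership in $\Ss_0$ is read off immediately on the Fourier side without a separate decay argument for $v$.

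One minor point: in step (1) you conclude only $\supp v \subseteq \overline{\R^+}$, whereas the lemma asserts $\supp v \subset \R^+$ (the open half-line). The missing observation is that $\supp u$ is closed and contained in $(0,\infty)$, hence $\supp u \subseteq [\varepsilon_0,\infty)$ for some $\varepsilon_0 > 0$, so in fact $v$ vanishes on $(-\infty,\varepsilon_0)$; the paper makes this explicit.
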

	\begin{proof}
		By the assumption, $\hht{u}$ has infinite order vanishing at $0$ and rapid decay at $\infty$, thus $(2 \pi i \xi)^{-1} \hht{u} \in \Ss_0 (\R^N)$. 
		
		It suffices to show that $\widehat v(\xi)=(2\pi i\xi)^{-1}\widehat u(\xi)$. Indeed since $\big(\operatorname{p.v.}(2\pi i\xi)^{-1}\big)^\vee(x)=\frac12\operatorname{sgn}(x)$, we have
		\begin{align*}
		\big((2\pi i\xi)^{-1}\widehat u(\xi)\big)^\vee(x) 
		&= \frac12\operatorname{sgn}\ast f (t) 
		\\&=\frac12 \int_{\R_+}u(t -s)ds - \frac12\int_{\R_-}u(t -s)ds
		\\ &=\int_{\R_+}u(t -s)ds-\frac12\int_\R u(t -s)ds
		\\ &=\int_{\R_+}u(t -s) ds= v(t), 
		\end{align*}
		where we use the fact that $\int_\R u(t-s)ds=\int_\R u(s)ds =0$ for all $x\in\R$. 
		Finally since $\supp u \subset \R^+$, we have $\supp u \seq [\ve_0, \infty)$ for some $\ve_0 >0$. Therefore $v(t) = \int_{\R^+} u(t-s) \, ds = 0$ when $t < \ve_0$, in particular $\supp v \in \R^+$.  
	\end{proof} 
For $u,v$ given in \rl{1d_antider}, we shall use the notation: 
$v= D_t^{-1} u$. 
	\begin{prop} \label{antider} 
		Let $K \subset \R^N$ be an open convex cone centered at $0$, and suppose $ f\in \Ss_0(\R^N)$ is supported in $K$. Then there exist $\si_1, \dots \si_N \in \Ss_0(\R^N)$ which satisfy the following:
		\begin{enumerate}[(i)]
			\item  \label{antider_1}
			For each $l =1, \cdots, N$, $\supp \si_l$ is supported in $K$.  
			
			\item \label{antider_2}
			$f = \sum_{l=1}^N D_{x_l} \si_l $. 
		\end{enumerate}
	\end{prop}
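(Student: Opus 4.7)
The plan is to first make a linear change of coordinates and then construct the $\sigma_l$ via a modified one-dimensional anti-derivative in a distinguished direction. A linear change of coordinates multiplies $(\partial_{x_l})$ by an invertible matrix, so any decomposition $f = \sum_l \partial_{x_l}\sigma_l$ with $\sigma_l\in\Ss_0$ supported in $K$ transfers between coordinate systems by taking linear combinations of the $\sigma_l$. I may therefore assume $K$ contains the cone $\{(x',x_N):x_N > c_0|x'|\}$ for some $c_0 > 0$, obtained by rotating any interior direction of $K$ to $e_N$ and using that $K$ is open. Fix the smooth polynomial $\rho(x') = \tfrac{c_0}{2}(1+|x'|^2) \geq c_0|x'|$ so that $\{x_N > \rho(x')\}\subset K$, and a bump $\chi\in\Ss(\R)$ supported in $[1,\infty)$ with $\int\chi=1$ and all higher moments zero (Lemma \ref{Lem::K-pair}(i)).

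Next, I would define
\[
\sigma_N^{(0)}(x',x_N) := \int_{-\infty}^{x_N}\bigl[f(x',t)-F(x')\,\chi(t-\rho(x'))\bigr]\,dt, \qquad F(x') := \int_\R f(x',t)\,dt.
\]
Since $f$ and the correction both vanish for $t \le \rho(x')$, $\sigma_N^{(0)}$ is supported in $K$. The bracket has zero total $t$-integral for every $x'$ (by $\int\chi=1$), giving Schwartz decay in $x_N$; Schwartz decay in $x'$ comes from that of $f$ and $F$. Moment vanishing $\sigma_N^{(0)}\in\Ss_0$ follows from integration by parts in $x_N$ combined with the polynomial form of $\rho$, the moment vanishing of $f$, and the higher-moment vanishing of $\chi$. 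Thus $f = \partial_{x_N}\sigma_N^{(0)} + R$, where $R := F(x')\chi(x_N - \rho(x'))\in\Ss_0$ is supported in $K$.

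To decompose $R$, note $F \in \Ss_0(\R^{N-1})$ by the moment hypothesis on $f$, and write $F = \sum_{l<N}\partial_{x_l} G_l$ via the Fourier multiplier $\hat G_l(\xi') = \frac{\xi_l}{2\pi i |\xi'|^2}\hat F(\xi')$, well-defined and in $\Ss_0(\R^{N-1})$ because $\hat F$ vanishes to infinite order at $\xi'=0$. Set $\sigma_l := G_l(x')\chi(x_N - \rho(x'))$ for $l<N$; these lie in $\Ss_0$ and are supported in $\{x_N > \rho(x')\}\subset K$. The chain rule yields
\[
\sum_{l<N}\partial_{x_l}\sigma_l = F(x')\chi(x_N - \rho(x')) - Q(x')\chi'(x_N - \rho(x')), \qquad Q := \sum_{l<N}(\partial_l\rho)G_l \in \Ss_0(\R^{N-1}).
\]
Since $\int\chi'=0$, the cross-term equals $\partial_{x_N}[Q(x')\chi(x_N-\rho(x'))]=:\partial_{x_N}\sigma_N^{(1)}$, where $\sigma_N^{(1)}\in\Ss_0$ is supported in $K$ by the same kind of argument as $\sigma_N^{(0)}$. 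Setting $\sigma_N := \sigma_N^{(0)}+\sigma_N^{(1)}$ completes the decomposition $f = \sum_l \partial_{x_l}\sigma_l$.

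The hard part is balancing three competing requirements — Schwartz regularity, infinite-order moment vanishing, and containment in the cone $K$. The naive 1D anti-derivative in direction $e_N$ fails to be Schwartz since the slice integrals $F(x')$ do not vanish, and the remedy of subtracting a tensor-like correction must be adapted to preserve support in $K$, which forces the shift $t-\rho(x')$ to depend nonlinearly on $x'$. The chain-rule cross-term this produces threatens to start an infinite regress, but the vanishing condition $\int\chi'=0$ lets us absorb it in one step via Lemma \ref{1d_antider} applied to the $x_N$-direction; the polynomial choice of $\rho$ is essential to keep all the moment integrals against $f$ and the $G_l$ in $\Ss_0$ tractable.
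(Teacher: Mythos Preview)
Your argument is essentially correct and takes a genuinely different route from the paper. The paper reduces by a linear change of variables to $K\supseteq(0,\infty)^N$ and then builds $\sigma_l$ as dyadic convolution sums $\sigma_l=\sum_{j\in\Z}\lambda_{l,j}\ast f$, where the $\lambda_{l,j}$ are tensor products of dilates of the one-dimensional function $g$ from Lemma~\ref{Lem::K-pair}; the identity $f=\sum_l\partial_{x_l}\sigma_l$ then comes from a telescoping product on the Fourier side. Your construction is instead a direct one-step anti-derivative in a distinguished direction with a shifted correction, followed by an $(N-1)$-dimensional Riesz-type Fourier decomposition of the slice integral $F$. Your approach is more elementary and avoids the dyadic machinery, at the cost of the nonlinear shift $x_N\mapsto x_N-\rho(x')$, which forces the slightly delicate Schwartz and $\Ss_0$ checks you sketch. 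The paper's convolution structure, on the other hand, feeds directly into the Littlewood--Paley framework used later (Proposition~\ref{special_Lip_anti}).

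One point to fix: the sentence ``$f$ and the correction both vanish for $t\le\rho(x')$'' is not correct for $f$. You only arranged $K\supseteq\{x_N>c_0|x'|\}$, not equality, so $f(x',t)$ may well be nonzero for some $t\le\rho(x')$. The support conclusion for $\sigma_N^{(0)}$ is still right, but for a different reason: since $K$ is an open convex cone containing $e_N$, one has $K+\R_{>0}e_N\subseteq K$; hence if $(x',x_N)\notin K$ then $(x',t)\notin K$ for all $t\le x_N$, so $f(x',t)=0$ on the range of integration. The correction term vanishes there for the reason you give, since $(x',x_N)\notin K$ forces $x_N\le c_0|x'|\le\rho(x')$. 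Also, the absorption of the cross term $Q(x')\chi'(x_N-\rho(x'))$ into $\partial_{x_N}\sigma_N^{(1)}$ is just the chain rule; neither $\int\chi'=0$ nor Lemma~\ref{1d_antider} is needed there.
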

	
	\begin{proof} 
		By passing to an invertible linear transformation and taking  suitable linear combinations, we can assume that  $K\supseteq(0,\infty)^N$. Thus by convexity we have $K+[0,\infty)^N\subseteq K$.
		
		By taking Fourier transform, it suffices to find $\si = (\si_1, \cdots, \si_N)$ such that 
		\eq{thh_Fsum} 
		\hht{f} (\xi) = \sum_{l=1}^N ( 2 \pi i \xi_l) \hht{\si}_l (\xi).  
		\eeq  
		Let $g(t)$ be the function as in \rl{Lem::K-pair}, namely that $g \in \mathscr S (\R)$, $\supp g \seq [1, \infty)$, $\int g = 1 $ and $ \int_{\R} t^k g(t) = 0$ for all $k \in \Z^+$. Define
		\[
		g_j (t):= 2^j g(2^j t) - 2^{j-1} g (2^{j-1} t), \quad  j \in \Z, 
		\]
		so that $g_j$ are Schwartz functions supported in $\R^+$. By the moment condition on $g$, we have $g_j \in \Ss_0(\R)$. It follows from  \rl{1d_antider} that $D_t^{-1} g_j \in \Ss_0(\R)$ and $\supp D_t^{-1} g_j \subset \R^+$.

		For $l=1,\dots,N$ and $j\in\Z$, we define $\lambda_{l,j}\in\Ss(\R^N)$ by
		\begin{equation}\label{laljhat} 
		\la_{l, j}(x) 
		:= \left( \prod_{1\leq i \leq l-1} 2^j g(2^j x_i) \right) 
		(D_t^{-1}g_j) (x_l) 
		\left( \prod_{l+1 \leq i \leq N} 2^{j-1} g(2^{j-1} x_i) \right). 
		\end{equation}
		The corresponding Fourier transform satisfies 
		\begin{align} \label{lalj}
		\hht{\la} _{l,j} ( \xi) 
		= \left( \prod_{1 \leq i \leq l-1} \hht{g} (2^{-j} \xi_i)  \right) 
		\left[ \frac{\hht{g} (2^{-j} \xi_l)  - \hht{g} (2^{1-j} \xi_l )}{ 2 \pi i \xi_l}  \right] 
		\left(  \prod_{l+1 \leq i \leq N} \hht{g} (2^{1-j} \xi_{i}) \right). 
		\end{align} 
		By the support conditions	 for $g$ and $D_t^{-1} g_j$, we have  $\supp\lambda_{l,j}\subseteq [2^{-j},\infty)^N$ for all $1\le l\le N$ and $j\in\Z$. 
		Also, we have the scaling relation: 
		\begin{equation} \label{la_scaling} 
		\hht{\la}_{l,j} (\xi) = 2^{-j} \hht{\la}_{l,0} (2^{-j} \xi) ,
		\quad 	\la_{l,j} (x) = 2^{-j} 2^{jN} \la_{l,0} (2^{j} x). 
		\end{equation} 
		
		We now define formally
		\begin{equation}\label{Eqn::anti_deri_psi::DefMu}
		\si_l (x) := \sum_{j \in \Z} \la_{l,j} \ast f (x), \quad 1 \leq j \leq N, 
		\end{equation} 
		and accordingly 
		\begin{equation} \label{mu_l}
		\widehat \si_l(\xi) = \sum_{j\in\Z} \widehat \la_{l,j}(\xi) \hht{f}(\xi),\quad 1\le l\le N. 
		\end{equation} 
		Since $\supp \la_{l,j}  \seq [0,\infty)^N$, we have $\supp \si_l \seq \ov{\bigcup_{j \in \Z} \supp \la_{l,j} }+K  \seq K$. 
		This proves \ref{antider_1}.  
		
		To show that $\si_l \in \Ss_0(\R^N)$, it suffices to show that the sum 
		\[
		\sum_{j \in \Z} \hht{\la}_{l, j} (\xi) \hht{f} (\xi), \quad  1 \leq l \leq N 
		\]
		converges in $\Ss(\R^N)$ and has infinite order vanishing at $\xi = 0$. To this end, we show that for any $M \in \Z$ and $\all, \beta \in \N^N$, the following series converges to a bounded function uniformly for $\xi \in \R^N$: 
		\begin{equation} \label{S0_cvg} 
		| \xi |^M \sum_{j \in \Z} | \pa^{\all}  \hht{\la}_{l,j}(\xi) | |\pa^{\beta} \hht{f}(\xi)|,
		\quad 1 \leq l \leq N. 
		\end{equation} 
		We separate the sum into $\sum_{j >0}$ and $\sum_{j\leq 0}$. By \re{la_scaling} we have 
		\begin{equation} \label{la_der_scaling}
		\pa^{\all} \hht{\la}_{l,j} (\xi) = 2^{- (1 + |\all|) j} \pa^{\all}  (\hht{\la}_{l,0} ) (2^{-j} \xi). 
		\end{equation}
		Since $\hht{f}$ has rapid decay as $\xi \to \infty$ and is flat at $\xi =0$, we have  
		$\sup_{\xi \in \R^N}$ $|\xi|^M |\pa^{\beta} \hht{f} (\xi) |< \infty$. Thus  
		\[
		\sum_{j=1}^{\infty} |\xi|^M |\pa^{\all} \hht{\la}_{l,j} (\xi) | | \pa^{\beta} \hht{f} (\xi) | 
		\leq \| \pa^{\all} \hht{\la}_{l,0} \|_{C^0} \| |\xi|^M \pa^{\beta} \hht{f} \|_{C^0} 
		\sum_{j=1}^{\infty} 2^{- (1+ |\all|) j} < \infty. 
		\]
		On the other hand, by \re{la_der_scaling} and the fact that $\sup_{\xi \in \R^N}  |\xi|^{|\all| + 2} |  \pa^{\all} \hht{\la}_{l,0}(\xi)| = C_0 < \infty$ we have
		\begin{align*}
		| \pa^{\all}\hht{\la} _{l,j} (\xi)|  
		&=  2^{- ( 1 + |\all|)j} |\pa^{\all} \hht{\la}_{l,0} ( 2^{-j} \xi) | 
		\\  &\leq C_0  2^{- ( 1 + |\all|)j} |2^{-j} \xi|^{-|\all| -2}
		\\ &\leq C_0 2^j |\xi|^{-|\all| -2}. 
		\end{align*}
		Setting $ \sup_{\xi \in \R^N}  |\xi|^{M- |\all| -2} |\pa^{\beta} \hht{f} (\xi)| = C_1$, we get 
		\[
		\sum_{j= -\infty}^0 |\xi|^M | \pa^{\all}  \hht{\la}_{l,j} (\xi) | \pa^{\beta} \hht{f} (\xi)| 
		\leq C_0 C_1 \sum_{j= -\infty}^0 2^j < \infty.
		\]
		Thus $\re{S0_cvg}$ converges uniformly for all $M$, and multi-index $\all$ and $\beta$, and we conclude that $\sum_{j \in \Z} \la_{l,j} \ast  f \in \Ss_0 (\R^N)$ for $l = 1, \dots, N$. 
		
		Finally we prove \ref{antider_2}. It suffices to show that for $\xi\in\R^N$
		\begin{equation} \label{antider_Fourier} 
	\hht{f} (\xi) = \sum_{l=1}^N \sum_{j \in \Z} (2 \pi i \xi_l) \hht{\la_{l,j}} (\xi) \hht{f} (\xi). 
		\end{equation}
		Indeed, using \re{laljhat} we have for $\xi \neq 0$, 
		\begin{align*}
  \sum_{l=1}^N  \sum_{j \in \Z} (2 \pi i \xi_l)  \hht{\la}_{l, j}
	&= \sum_{j\in\Z} \sum_{l=1}^N \widehat{g} (2^{-j} \xi_1) \cdots \widehat{g}  (2^{-j} \xi_{l-1})   
		\big(\widehat{g} (2^{-j} \xi_l)  - \widehat{g} (2^{1-j} \xi_l )\big)
		\widehat{g} (2^{1-j} \xi_{l+1}) \cdots \widehat{g}  (2^{ 1-j} \xi_N) 
		\\ &= \lim_{J \to \infty} \sum_{j=-J}^J \widehat g(2^{-j}\xi_1)\cdots \widehat g(2^{-j}\xi_N)-\widehat g(2^{1-j}\xi_1)\cdots\widehat g(2^{1-j}\xi_N) 
		\\
		& =  \lim_{J \to \infty} \widehat g(2^{-J}\xi_1)\cdots \widehat g(2^{-J}\xi_N)-\widehat g(2^{J}\xi_1)\cdots\widehat g(2^{J}\xi_N) = 1,  
		\end{align*} 
		where we used the fact $\hht{g}(0) = 1$ and $\lim_{\xi \to \pm \infty} \hht{g} (\xi) = 0$. 
		This proves \re{antider_Fourier} and \ref{antider_2}.   
	\end{proof}
	
	By induction on the order of derivatives, we easily obtain the following. 
	\begin{prop} \label{hord_antider}  
		Let $K \subset \R^N$ be a cone centered at $0$ and let $g \in \Ss_0 (\R^N )$ be supported in $K$. Then for each $m \in \Z^+$ there exists $\{ \mu_{\beta} \}_{|\beta| = m} \in \Ss_0 (\R^N)$ such that   
		\begin{enumerate}[(i)]
			\item  \label{hord_antider_1}  $\supp \mu_{\beta} \subset K$. 
			\item \label{hord_antider_2}
			 $g = \sum_{|\beta| = m} D^{\beta} \mu_{\beta} $.
		\end{enumerate} 
	\end{prop}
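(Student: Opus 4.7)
The plan is to induct on the order $m$, using \rp{antider} as both the base case and the engine of the inductive step. The hypotheses on $g$ (belonging to $\Ss_0(\R^N)$, supported in the cone $K$) are exactly what \rp{antider} consumes and what it returns for each anti-derivative, so the induction will be self-propagating.

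For the base case $m=1$, \rp{antider} applied to $g$ directly produces $\sigma_1,\ldots,\sigma_N \in \Ss_0(\R^N)$, each supported in $K$, with $g=\sum_{l=1}^N D_{x_l}\sigma_l$. Setting $\mu_{e_l}:=\sigma_l$, where $e_l$ denotes the multi-index with a $1$ in the $l$-th slot and $0$ elsewhere, this is precisely the conclusion indexed by the $N$ multi-indices of length $1$.

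For the inductive step, assume the conclusion for some $m \geq 1$. Applying it to $g$ yields $\{\mu_\beta\}_{|\beta|=m} \subset \Ss_0(\R^N)$ with $\supp \mu_\beta \subset K$ and $g = \sum_{|\beta|=m} D^\beta \mu_\beta$. Each $\mu_\beta$ again satisfies the hypotheses of \rp{antider}, so I decompose $\mu_\beta = \sum_{l=1}^N D_{x_l}\sigma_{\beta,l}$ with $\sigma_{\beta,l} \in \Ss_0(\R^N)$ supported in $K$. Substituting gives
\[
g \;=\; \sum_{|\beta|=m}\sum_{l=1}^N D^{\beta+e_l}\sigma_{\beta,l}.
\]
Collecting the terms with a common new multi-index $\gamma = \beta+e_l$ (of length $m+1$), define
\[
\mu_\gamma \;:=\; \sum_{\substack{|\beta|=m,\ 1 \le l \le N \\ \beta+e_l = \gamma}} \sigma_{\beta,l},
\]
which is a finite sum of elements of $\Ss_0(\R^N)$ supported in $K$, hence itself an element of $\Ss_0(\R^N)$ with $\supp \mu_\gamma \subset K$. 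Then $g = \sum_{|\gamma|=m+1} D^\gamma \mu_\gamma$, closing the induction.

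There is essentially no real obstacle here: the analytic content lives entirely in \rp{antider} (the construction of anti-derivatives preserving both $\Ss_0$ and the cone-support condition), while the present step is pure multi-index bookkeeping. The only points to verify are that $\Ss_0(\R^N)$ and the support property in $K$ are preserved under finite sums (immediate) and under the anti-derivative operation from \rp{antider} (built into that proposition's conclusion), both of which are automatic.
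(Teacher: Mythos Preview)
Your proposal is correct and matches the paper's approach exactly: the paper simply states that the result follows ``by induction on the order of derivatives'' from \rp{antider}, and you have written out precisely that induction. There is nothing to add.
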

	
	By combining \rp{hord_antider} with \rp{T_prop} we have the following: 
	\begin{prop}\label{special_Lip_anti} 
		For each $m \in \Z^+$, there exist convolution operators $\Fc_0$ and $\Fc_{\beta}$ with $|\beta| = m$, such that
		\begin{enumerate}[(i)]
			\item \label{special_Lip_anti_1}
			$\Fc_0: H^{s,p} (\R^N) \to H^{s+m, p} (\R^N)$, and $\Fc_{\beta}: H^{s,p} (\R^N) \to H^{s+m, p} (\R^N) $ for each $\beta \neq 0$. 
			\item \label{special_Lip_anti_2}
			For any $f \in \Ss'(\R^N)$, $f = \Fc_0 f + \sum_{|\beta| = m} D^{\beta} \Fc_{\beta} f$. 
			\item \label{special_Lip_anti_3} 
			Let $\omega$ be a special Lipschitz domain and suppose $f \in \Ss'(\R^N)$ satisfies $f|_{\om} \equiv 0$. Then $\Fc_{\beta} f|_{\om} \equiv 0$ for all $\beta$.   
		\end{enumerate}
	\end{prop}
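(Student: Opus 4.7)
The plan is to define $\Fc_0$ and $\Fc_{\beta}$ using the $\Kb$ dyadic pair $(\phi,\psi)$ from \rl{Lem::K-pair} (with $\phi \in \Rf$ and $\psi \in \Gf$), splitting off the $j=0$ term from the reproducing formula $\sum_{j=0}^{\infty}\psi_j\ast\phi_j=\delta_0$ and applying the higher order antiderivative to the remaining terms. Observe that $\phi_1 = 2^N\phi_0(2\cdot)-\phi_0$ lies in $\Ss_0(\R^N)$ (its moments all vanish because $\phi \in \Rf$) and is supported in $-\Kb$. Applying \rp{hord_antider} to $g=\phi_1$ with $K=-\Kb$ produces $\mu_{\beta}\in\Ss_0(\R^N)$, $|\beta|=m$, with $\supp\mu_{\beta}\subset-\Kb$ and $\phi_1=\sum_{|\beta|=m}D^{\beta}\mu_{\beta}$.

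Next, for $j\ge 1$ set $\mu_{\beta,j}(x):=2^{N(j-1)}\mu_{\beta}(2^{j-1}x)$ and $\mu_{\beta,0}:=0$; then $\tilde\mu_{\beta}=(\mu_{\beta,j})_{j=0}^{\infty}\in\Gf$ by construction. Using the scaling $\phi_j(x)=2^{N(j-1)}\phi_1(2^{j-1}x)$ and chain rule on $D^{\beta}$, one computes that for $j\ge 1$,
\[
\phi_j=\sum_{|\beta|=m}2^{-m(j-1)}D^{\beta}\mu_{\beta,j},\qquad \psi_j\ast\phi_j=\sum_{|\beta|=m}2^{-m(j-1)}D^{\beta}(\psi_j\ast\mu_{\beta,j}).
\]
This motivates the definition
\[
\Fc_0 f:=\psi_0\ast\phi_0\ast f,\qquad \Fc_{\beta}f:=2^m\sum_{j=0}^{\infty}2^{-mj}\psi_j\ast\mu_{\beta,j}\ast f\quad(|\beta|=m).
\]

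For \ref{special_Lip_anti_1}, the operator $\Fc_0$ is convolution by the Schwartz function $\psi_0\ast\phi_0$, hence smoothing of all orders on $H^{s,p}(\R^N)$, and $\Fc_{\beta}$ equals $2^m T^{-m}f$ in the notation of \rp{T_prop} applied to the pair $(\tilde\mu_{\beta},\psi)\in\Gf\times\Gf$, giving the bound $H^{s,p}(\R^N)\to H^{s+m,p}(\R^N)$. For \ref{special_Lip_anti_2}, combining the reproducing formula $f=\sum_{j=0}^{\infty}\psi_j\ast\phi_j\ast f$ with the identity above for $j\ge 1$ yields $f=\Fc_0f+\sum_{|\beta|=m}D^{\beta}\Fc_{\beta}f$, where the commutation of $D^{\beta}$ with the sum over $j$ is justified by convergence in $\Ss'(\R^N)$.

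The main obstacle is \ref{special_Lip_anti_3}, the vanishing condition. Since $\supp\phi_j,\supp\psi_j,\supp\mu_{\beta,j}\subset-\Kb$ and the special Lipschitz domain satisfies $\omega+\Kb\subseteq\omega$, for any test function $\chi\in C_c^{\infty}(\omega)$ the reflection $\tilde g(x):=g(-x)$ sends these kernels into $\Kb$, so
\[
\supp(\tilde\psi_j\ast\tilde\phi_j\ast\chi)\subset\Kb+\Kb+\omega\subseteq\omega,\qquad\supp(\tilde\psi_j\ast\tilde\mu_{\beta,j}\ast\chi)\subset\omega,
\]
uniformly in $j$. Duality then gives $\langle\psi_j\ast\phi_j\ast f,\chi\rangle=\langle f,\tilde\psi_j\ast\tilde\phi_j\ast\chi\rangle=0$ and likewise for the $\mu_{\beta,j}$ pieces, since $f|_{\omega}\equiv 0$. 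The only subtlety is swapping the infinite sum with the pairing in the definition of $\Fc_{\beta}$, which I would handle by taking partial sums (each annihilated by the support argument) and passing to the $\Ss'$-limit, so that $\langle \Fc_{\beta}f,\chi\rangle=0$ for every $\chi\in C_c^{\infty}(\omega)$, proving $\Fc_{\beta}f|_{\omega}\equiv 0$.
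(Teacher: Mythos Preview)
Your proof is correct and follows essentially the same construction as the paper. The only cosmetic difference is that you apply \rp{hord_antider} to $\phi_1$ (after checking $\phi_1\in\Ss_0$), whereas the paper applies it to $\psi_1$; since both sequences lie in $\Gf$ and \rp{T_prop} only requires $\Gf\times\Gf$, the two choices are interchangeable. For \ref{special_Lip_anti_3} the paper argues directly via $\supp(\eta_j\ast\phi_j\ast f)\subseteq -\Kb-\Kb+\omega^c\subseteq\omega^c$ rather than by your duality pairing, but the two arguments are equivalent.
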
 
	\begin{proof}
		Let $(\phi,\psi)$ be a $\Kb$-dyadic pair. Since $\psi_1 \in \Ss_0 (\R^N)$ is supported $-\Kb$, in view of \rp{hord_antider} with $g=\psi_1$, there exist $\mu_{\beta} \in \Ss_0(\R^N)$ supported in $-\Kb$ such that 
		\begin{equation} \label{psi1_sum} 
		\psi_1 = \sum_{|\beta| = m} D^{\beta} \mu_{\beta}. 
		\end{equation}
		Define  
		\[
		\eta_0^{\beta} := 0, \quad \eta^{\beta}_j (x) := 2^{N (j-1)} \mu_{\beta}(2^{j-1} x ), \quad j \geq 1. 
		\]
		In particular, we have $ \eta_1^{\beta} (x) = \mu_{\beta}(x)$ and $\eta_j^{\beta} (x) = 2^{N(j-1)} \eta_1^{\beta} (2^{j-1} x)$.  
		Hence $\eta^{\beta} \in \mf{G}$, and $ \eta_j^{\beta} \subset - \Kb$. Setting $g := \psi_1$ in \rp{hord_antider}, we can write for $j \geq 1$, 
		\begin{align*}
		\psi_j(x) &= 2^{N(j-1)} \psi_1 (2^{j-1} x)
		\\ &=  \sum_{| \beta| = m} 2^{N(j-1)} (D^{\beta} \mu_{\beta})(2^{j-1} x)
		\\ &= \sum_{| \beta| = m} 2^{m(1-j) }D^{\beta} \eta_j^{\beta} (x). 
		\end{align*}
		Therefore 
		\begin{align*}
		f &= \sum_{j=0}^{\infty} \psi_j \ast \phi_j \ast f 
		\\ &=  \psi_0 \ast \phi_0 \ast f 
		+ \sum_{|\beta| = m} D^{\beta}  \sum_{j=1}^{\infty} 2^{m(1-j)}  \eta_j^{\beta} \ast \phi_j \ast f   
		\\ &=: \Fc_0 f + \sum_{|\beta| = m}  D^{\beta}( \Fc_{\beta} f), 
		\end{align*}
		where we set 
		\begin{equation} \label{antider_Fbeta} 
		\Fc_0 f := \psi_0 \ast \phi_0 \ast f, \quad 
		\Fc_{\beta} f := \sum_{j=1}^{\infty} 2^{m(1-j)}  \eta_j^{\beta} \ast \phi_j \ast f  , \quad  |\beta| =m. 
		\end{equation}
		Clearly $\Fc_0:H^{s,p}(\R^N)\to H^{s+l,p}(\R^N)$ for any $l \in \Z^+$, (since $\psi_0, \phi_0 \in \Ss(\R^N)$), in particular for $l=m$. By \rp{T_prop}, $\Fc_{\beta}: H^{s,p} (\R^N) \to H^{s+m, p} (\R^N)$. Finally if $\supp  f \subseteq \om^c$, then 
		$\supp (\eta_j \ast \phi_j \ast f) \subseteq \supp \eta_j + \supp \phi_j + \supp f =
		-\Kb - \Kb + \om^c  \subseteq \om^c$, for each $j \geq 0$. By \re{antider_Fbeta} we have $\supp \Fc_{\beta}f \subseteq \om^c$.  
	\end{proof} 
	
	We are ready to prove the main result of the section, namely constructing anti-derivative operators on bounded Lipschitz domains. This is done by using partition of unity and \rp{special_Lip_anti}.   
	
	We now introduce the following notation for partition of unity: 
	\begin{note}\label{Note::PartitionUnity}
		Let $\Omega\subset\R^N$ be a bounded Lipschitz domain, and let $\Uc$ be a bounded open set containing $\overline{\Omega}$. We shall use the following construction, which can be obtained by standard partition of unity argument:
		\begin{itemize}
			\item $(U_\nu)_{\nu=1}^M$ are finitely many bounded open sets on $\R^N$.
			\item$(\Phi_\nu:\R^N\to\R^N)_{\nu=1}^M$ are invertible affine linear transforms.
			\item $(\chi_\nu)_{\nu=0}^M$ as $C_c^\infty$-functions on $\R^N$, $ 0 \leq \chi_{\nu} \leq 1$. 
			\item $(\omega_\nu)_{\nu=1}^M$ as special Lipschitz domains on $\R^N$. That is $\omega_\nu=\{x_N>\rho_\nu(x')\}$ for some $\|\nabla\rho_\nu\|_{L^\infty}<1$.
		\end{itemize}
		They have the following properties: 
		\begin{itemize} 
			\item $b\Omega\subset\bigcup_{\nu=1}^MU_\nu \subset \subset \Uc$.
			\item\label{Item::PartitionUnity::chi} $\chi_0\in C_c^\infty(\Omega)$, $\chi_\nu\in C_c^\infty(U_\nu)$ for $1\le \nu\le M$, and $\chi_0+\sum_{\nu=1}^M\chi_\nu^2\equiv1$ in a neighborhood of $\overline\Omega$.
			\item\label{Item::PartitionUnity::Phi} For each $1\le \nu\le M $, $U_\nu=\Phi_\nu(\B^N)$ and $U_\nu\cap\Omega=U_\nu\cap\Phi_\nu(\omega_\nu)$.
		\end{itemize} 
	\end{note} 
	
	\begin{prop} \label{Thm::Lip_anti}
		Let $\Om$ be a bounded Lipschitz domain.  For any $m \in \Z^+$, there exist operators $\Sc_{\Om}^{m, \gm}: \Ss' (\R^N) \to \Ss'(\R^N)$, with $|\gm| \leq m$ such that  
		\begin{enumerate}[(i)] 
			\item 
			$\Sc^{m, \gm}_{\Om}: H^{s,p} (\R^N)  \to H^{s+m,p} (\R^N)$ for all $1 < p < \infty$ and $s \in \R$. 
			\item 
			\[ 
			f =  \sum_{|\gm| \leq m } D^{\gm} \Sc^{m,\gm}_{\Om} f,  \quad \text{for $ f \in \Ss'(\R^N)$}.    
			\] 
			\item 
			If $f \in \Ss'(\R^N)$ satisfies $f|_{\Om} \equiv 0$, then $\Sc_{\Om}^{m, \gm} f|_{\Om} \equiv 0$ for all $\gm$.   
		\end{enumerate} 
	\end{prop}

	\begin{proof}
		We adopt the Notation  \ref{Note::PartitionUnity}.
		Define $\chi_\infty: = 1- \chi_0 - \sum_{\nu=1}^M \chi_\nu^2$. Let $\mu_\infty\in C^\infty(\R^N)$ be such that 
		\begin{itemize}
			\item $1- \mu_\infty\in C_c^\infty(\R^N)$,\quad $\supp\mu_\infty\subseteq\R^N\sm\overline\Omega$,\quad and $\mu_\infty\chi_\infty\equiv\chi_\infty$.
		\end{itemize}
		For $f \in \Ss' (\R^N)$, we have
		\begin{align}  \label{antider_fsum}
		f &= \chi_0 f + \chi_{\infty} f + \sum_{\nu=1}^M \chi_{\nu}^2 f 
		\\ \nn &= \chi_0 f + \mu_\infty\chi_{\infty} f  + \sum_{\nu=1}^M \chi_{\nu}  \left(  ( \chi_{\nu}f ) \circ \Phi_{\nu} \circ \Phi_{\nu}^{-1}  \right).  
		\end{align}
		By \rp{special_Lip_anti}, there exist operators $\Fc_0: H^{s,p}(\R^N) \to H^{s+m,p} (\R^N)$ as well, $\Fc_{\beta}: H^{s,p}(\R^N) \to H^{s+m,p} (\R^N)$, $|\beta| =m$ (formula \re{antider_Fbeta}) such that 
		\begin{align} \label{antider_fsum2}
		f &=  \Fc_0 (\chi_0 f) + \mu_{\infty}\cdot \Fc_0 (  \chi_{\infty} f) + \sum_{\nu=1}^M \chi_{\nu} \cdot \left( \Fc_0 \left[ (\chi_{\nu} f ) \circ \Phi_{\nu} \right] \circ \Phi_{\nu}^{-1} \right) 
		\\ \nn &\quad+ \sum_{|\beta| =m}  \left( D^{\beta}\Fc_{\beta} (\chi_0 f ) 
		+ \mu_{\infty}\cdot D^{\beta}  \Fc_{\beta}  (  \chi_{\infty} f ) +  
		\sum_{\nu =1}^M  \chi_{\nu} \cdot  \left( D^{\beta} \Fc_{\beta} \left[ (\chi_{\nu }f) \circ \Phi_{\nu} \right]  \circ \Phi_{\nu}^{-1} \right) \right).  
		\end{align} 
		%\blu{Here we use the fact that $\chi_\infty f=\mu_\infty\chi_\infty f=\mu_\infty\cdot \Fc_0(\chi_\infty f)+\sum_{|\beta|=m}\mu_\infty\cdot D^\beta\Fc_\beta(\chi_\infty f)$.}
		
		Set $\wti{\Fc}_{\beta, \nu} (f) := \Fc_{\beta} \left[ (\chi_{\nu }f) \circ \Phi_{\nu} \right]  \circ \Phi_{\nu}^{-1}$, so then $\Fc_{\beta} \left[ (\chi_{\nu }f)  \circ \Phi_{\nu}  \right] = (\wti{\Fc}_{\beta, \nu}f)\circ \Phi_{\nu}$. Since $\Phi_{\nu}$ are linear,
		\begin{equation} \label{Fbeta_der} 
		D^{\beta} \Fc_{\beta} \left[ (\chi_{\nu }f)  \circ \Phi_{\nu} \right]
		= \sum_{| \tau| = m}  c_{\nu,\beta,\tau} [ D^{\tau}( \wti{\Fc}_{\beta, \nu}f)]\circ  \Phi_{\nu}, \quad |\beta| =m 
		\end{equation} 
		for some constants $c_{\nu,\beta,\tau}$. 
		Using \re{Fbeta_der} and the Leibnitz rule we get   
		\begin{align*}
		&\sum_{|\beta| =m} \sum_{\nu =1}^M  \chi_{\nu} \cdot  \left( D^{\beta} \Fc_{\beta} \left[ (\chi_{\nu }f) \circ \Phi_{\nu} \right]  \circ \Phi_{\nu}^{-1} \right)
		\\  &\quad = \sum_{\nu =1}^M  \sum_{|\beta|, | \tau| = m}  c_{\nu,\beta,\tau}  \chi_{\nu} \cdot D^{\tau} \wti{\Fc}_{\beta, \nu}(f)
		\\  & \quad = \sum_{ |\gm| \leq m} D^{\gm} \left( \sum_{\nu=1}^M \sum_{|\beta|=m} h_{\nu, \beta, \gm} \cdot \wti{\Fc}_{\beta, \nu}(f) \right), 
		\end{align*}
		where $ h_{\nu, \beta, \gm}$ are linear combinations of $c_{\nu,\beta,\tau} \chi_{\nu}$ and their derivatives up to order $m$. 
		In particular $h_{\nu, \beta, \gm}$ are $C^{\infty}$ functions supported in $U_{\nu}$. 
		Substituting into \re{antider_fsum2} we obtain    
		\begin{align*}
		f &=  \sum_{ | \gm| \leq m} D^{\gm} \Sc^{m, \gm}_\Om   f,  
		\end{align*}
		where we define
		\begin{gather*}
		\Sc^{m,0}_\Om f :=  \Fc_0 (\chi_0 f) + \mu_{\infty}\cdot \Fc_0 (\chi_{\infty} f) + \sum_{\nu=1}^M \chi_{\nu} \cdot \left( \Fc_0 \left[ (\chi_{\nu} f ) \circ \Phi_{\nu} \right] \circ \Phi_{\nu}^{-1} \right)  
		+ \sum_{\nu=1}^M \sum_{| \beta| = m} h_{\nu, \beta, 0}\cdot \wti{\Fc}_{\beta, \nu} (f)   
		\\ 
		\Sc^{m, \gm}_\Om f: = \sum_{\nu=1}^M \sum_{|\beta|=m} h_{\nu, \beta, \gm}\cdot  \wti{\Fc}_{\beta, \nu}(f),  \quad  1 \leq |\gm| < m, 
		\\  
		\Sc^{m, \gm}_\Om f: = \Fc_{\gm}  ( \chi_0 f) + \mu_{\infty}\cdot \Fc_{\gm} (\chi_{\infty} f) + \sum_{\nu=1}^M \sum_{|\beta|=m} h_{\nu, \beta, \gm} \cdot \wti{\Fc}_{\beta, \nu}(f),  \quad |\gm| =m . 
		\end{gather*} 
		The boundedness of $\Sc_{m, \gm}: H^{s,p}(\R^N) \to H^{s+m,p} (\R^N)$ follows from that of $\Fc_0$ and $\Fc_{\beta}$.  
		Suppose now that $f|_{\Om} \equiv 0$; then $\chi_0 f \equiv 0$ in $\R^N$, and $(\chi_{\nu} f) \circ \Phi_{\nu} \equiv 0$ in $\om_{\nu}$  for each $1 \leq \nu \leq M$.  By \rp{special_Lip_anti} \ref{special_Lip_anti_3},  $\Fc_{0} [(\chi_{\nu} f) \circ \Phi_{\nu}], \Fc_{\beta} [(\chi_{\nu} f) \circ \Phi_{\nu}]  \equiv 0$ in $\om_{\nu}$ for all $|\beta| = m$. It follows that 
		\begin{gather*}
		\Fc_0 (\chi_0 f) := \psi_0 \ast \phi_0 \ast (\chi_0 f) \equiv 0, \quad \text{in $\R^N$}; \quad \mu_{\infty} \cdot\Fc_0 (\chi_{\infty} f), \, \mu_{\infty} \cdot\Fc_{\beta} (\chi_{\infty} f) \equiv 0, \quad \text{in $\Om$};  
		\\  
		\chi_{\nu} \cdot \left( \Fc_0 \left[ (\chi_{\nu} f ) \circ \Phi_{\nu} \right] \circ \Phi_{\nu}^{-1} \right)  \equiv 0, \quad 
		h_{\nu, \beta, 0}\cdot \wti{\Fc}_{\beta, \nu} (f), \, 
		h_{\nu, \beta, \gm} \cdot \wti{\Fc}_{\beta, \nu}(f)  \equiv 0, \quad \text{in $\Om$}. 
		\end{gather*} 
		We conclude that $(\Sc_{m, \gm} f)|_{\Om} \equiv 0$ with $ |\gm| \leq m$. 
	\end{proof}

	\section{Sobolev Estimates of Homotopy Operators}\label{Section4}  
	In this section we prove Theorems \ref{Thm::Ck_bdy_intro} and \ref{Thm::Cinfty_bdy_intro}.
	First we apply our anti-derivative operators from the last section to the commutator. 
	
	\begin{lemma}\label{comm_antider}
		Let $\Omega\subset\C^n$ be a bounded Lipschitz domain, and let $\Ec = \Ec_{\Om}$ be the extension operator given by Proposition \ref{ext_op}. Then for any $l \in \Z^+$, there exists a collection of operators $(\Sc^{l,\gm}_{\Om})_{|\gm| \leq l}$, such that for any $s\in\R$ and $1<p<\infty$, 
		\begin{enumerate}[(i)] 
			\item\label{comm_antider_1} 
			$ \Sc^{l, \gm}_{\Om}:H^{s,p}_{(0,q+1)}(\Om)\to H^{s+l ,p}_{(0,q+1)}(\C^n)$ are bounded linear operators for $1 \le q\le n$. 
			\item\label{comm_antider_2} For every $\var \in H^{s,p}_{(0,q)}(\Om)$, we have $\Sc^{l, \gm}_{\Om} [\dbar, \Ec] \var |_\Omega \equiv 0$, and  \begin{equation}\label{Eqn::DefEcGcForPsiCX::AntiDev}
			[\dbar,\Ec]\varphi= \sum_{ |\gm| \leq l} D^{\gm} \Sc^{l, \gm}_{\Om} [\dbar, \Ec] \var \quad\text{in }\C^n.
			\end{equation}
			Here $D^{\gm} $ acts on $\Sc^{l,\gm}_{\Om} [\dbar, \Ec]\var$ component-wise. 
		\end{enumerate}
	\end{lemma}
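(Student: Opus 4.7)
The plan is to invoke \rt{Thm::Lip_anti} with the commutator $[\dbar,\Ec]\varphi$ as input, exploiting the fact that this commutator vanishes identically on $\Om$ so that the support-preservation clause of that theorem applies.

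I would first record the vanishing. Since $\Ec$ is an extension operator, $(\Ec\varphi)|_\Om = \varphi$ in the distributional sense on $\Om$, so $\dbar(\Ec\varphi)|_\Om = \dbar\varphi$ coefficient-wise. Likewise $\Ec(\dbar\varphi)|_\Om = \dbar\varphi$, and subtraction gives $[\dbar,\Ec]\varphi|_\Om \equiv 0$. Moreover, by \rp{ext_op} the operator $\Ec$ is bounded $H^{s,p}(\Om)\to H^{s,p}(\C^n)$, and together with the fact that $\dbar$ is first order this shows that $\varphi\mapsto[\dbar,\Ec]\varphi$ is bounded from $H^{s,p}_{(0,q)}(\Om)$ into $H^{s-1,p}_{(0,q+1)}(\C^n)$.

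I would then apply \rt{Thm::Lip_anti} with the appropriate choice of $m$ to obtain scalar operators $\Sc^{l,\gm}_\Om$ on $\Ss'(\R^{2n})$ for $|\gm|\le l$, and let them act coefficient-wise on $(0,q+1)$-forms. The three conclusions of \rt{Thm::Lip_anti} then transfer essentially verbatim: the Sobolev boundedness yields (i) after composing with the bounded map $\varphi\mapsto[\dbar,\Ec]\varphi$; the reconstruction identity $f=\sum_{|\gm|\le l} D^\gm \Sc^{l,\gm}_\Om f$ applied to each coefficient of $[\dbar,\Ec]\varphi$ gives the display equation \re{Eqn::DefEcGcForPsiCX::AntiDev}; and the support-preservation property combined with the vanishing of $[\dbar,\Ec]\varphi$ on $\Om$ gives $\Sc^{l,\gm}_\Om[\dbar,\Ec]\varphi|_\Om \equiv 0$.

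No new analytic estimate is needed beyond what has already been carried out in \rt{Thm::Lip_anti}. The only content past routine bookkeeping (extending scalar operators to act coefficient-wise on forms and tracking the derivative balance between the one order lost by $\dbar$ and the $l$ orders gained by the anti-derivative) is the vanishing observation for the commutator, which is immediate from the extension property of $\Ec$; this is really the only nontrivial step, and it is the ingredient that brings the problem into the scope of the anti-derivative construction from Section \ref{Section3}.
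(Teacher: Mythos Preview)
Your proposal is correct and follows exactly the paper's approach: the paper's proof is a two-line remark that the components of $[\dbar,\Ec]\varphi$ are linear combinations of the components of $[D_i,\Ec]\varphi$, so the result follows immediately from \rt{Thm::Lip_anti} applied coefficient-wise. Your write-up supplies the same argument with the bookkeeping (vanishing of the commutator on $\Om$, Sobolev mapping properties) made explicit; note only that item \ref{comm_antider_1} as stated concerns the boundedness of $\Sc^{l,\gm}_\Om$ itself rather than its composition with $\varphi\mapsto[\dbar,\Ec]\varphi$, so no composition is needed there.
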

	
\begin{proof}
    Since the coordinate components of $[\dbar,\Ec]\varphi$ are the linear combinations of the coordinate components of $[D_1,\Ec]\varphi,\dots,[D_{2n},\Ec]\varphi$, where $D_i = \frac{\partial}{\partial x_i}$, the results follow immediately by  applying \rt{Thm::Lip_anti}.
\end{proof}
Our homotopy operator is given by
	\begin{equation}\label{Hq_def}
	\begin{aligned}
	\Hc_q \varphi(z) :=& \int_{\Uc} K^0_{0, q-1}(z,\cdot) \wedge \Ec \var
	+ (-1)^{|\gm|} \sum_{|\gm| \leq l} \int_{\Uc\sm\ov\Omega}  D^{\gm}( K^{01}_{0, q-1}(z,\cdot)) \wedge \Sc^{l, \gm}_{\Om} [\dbar, \Ec] \var, 
	\end{aligned}
	\end{equation}
Here $\Ec$ and $\Sc^{l, \gm}_{\Om}$ are operators given in Lemma \ref{comm_antider}, and we multiply $\Ec$ and $\Sc^{l, \gm}_{\Om}$ by a suitable smooth cut-off function so that $ \supp (\Ec \var), \supp (\Sc^{l, \gm}_{\Om} [\dbar, \Ec] \var) \subset \Uc$. The kernels $K^0$ and $K^{01}$ are given below by formulae \re{K0} and \re{K01}. The operators $\Ec, D^{\gm}_{\zeta}$ and $\Sc^{l, \gm}$ are applied to each component of $\var$, $K^{01}_{0,q-1}$ and $[\dbar, \Ec] \var $ respectively, and the integrals are interpreted as the sum of the integrals over all component functions. 

We need to interpret formula \re{Hq_def} when the coefficients of $\var$ are tempered distributions in the class $H^{s,p}$ where $s$ is negative. As it turns out, the first integral can be written as a linear combination of the convolutions with kernel $\frac { \ov{z_i}}{|z|^{2n}}$, and therefore makes sense for any tempered distribution. For the second integral, in view of Proposition \ref{Prop::Hq_Ckbdy}, we have $\Sc^{l, \gm}_{\Om} [\dbar, \Ec] \var \in H^{s,p}(\Uc)$ for positive $s$ if $l$ is sufficiently large, in particular $\Sc^{l, \gm}_{\Om} [\dbar, \Ec] \var \in L^1_{\loc}$. Since $K^{01}$ is smooth for fixed $z \in \Om$, the integrand defines a function in $L^1_\loc(\Uc\sm\Omega)$ for $z \in \Om$. 

When the boundary is $C^{k+2}$, we can take $l$ to be any integer greater or equal to $k+1$. Later we will show the $\Hc_q \var$ does not depend on the choice of $l \geq k+1$ (see the remark after \rc{Cor::hf}). 
	
The main result of the section is the following:
	
	\begin{thm}\label{Thm::EstHq}
		Let $\Omega\subset\C^n$ be a bounded strictly pseudoconvex domain with $C^{k+2}$ boundary, for $k \geq 1$. Let $1\le q\le n$ and let $\Hc_q$ be defined by \eqref{Hq_def}, where $l$ is any positive integer greater or equal to $k+1$. Then for any $s>\frac1p - k$ and $1<p<\infty$, $\Hc_q$ defines a bounded linear operator $\Hc_q: H_{(0,q)}^{s,p} (\Om)\to H^{s+\frac12,p}_{ (0,q-1)}(\Om)$.
	\end{thm}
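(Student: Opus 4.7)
The plan is to split $\Hc_q\var = \Hc_q^{(1)}\var + \Hc_q^{(2)}\var$ corresponding to the two groups of integrals in \eqref{Hq_def}. For the first piece, $K^0_{0,q-1}(z,\zeta)$ is a finite linear combination of Bochner-Martinelli components $c\,\overline{(\zeta_i-z_i)}/|\zeta-z|^{2n}$, so $\Hc_q^{(1)}\var$ is (up to a smooth cut-off) the convolution of $\Ec\var$ with such a kernel. These are classical fractional integrations of order $1$ which map $H^{s,p}(\R^{2n})$ boundedly into $H^{s+1,p}(\R^{2n})$ for every $s\in\R$ and $1<p<\infty$. Combined with the extension bound $\|\Ec\var\|_{H^{s,p}(\R^{2n})}\lesssim\|\var\|_{H^{s,p}(\Om)}$ from Proposition \ref{ext_op}, this yields $\|\Hc_q^{(1)}\var\|_{H^{s+1,p}(\Om)}\lesssim\|\var\|_{H^{s,p}(\Om)}$, which is strictly stronger than the required $H^{s+\frac12,p}(\Om)$ bound.

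The substantive work lies in the second piece. Lemma \ref{comm_antider} supplies $\Sc^{l,\gm}_\Om[\dbar,\Ec]\var\in H^{s+l,p}(\C^n)$ with support in $\overline\Om^c$ and with norm $\lesssim\|\var\|_{H^{s,p}(\Om)}$. Choosing $l=k+1$, the hypothesis $s>\frac1p-k$ forces $s+l>\frac1p+1$, so each anti-derivative sits comfortably in the positive Sobolev range where pointwise considerations apply. The whole problem therefore reduces to the \emph{kernel estimate}
\begin{equation*}
\Big\|\int_{\Uc\sm\overline\Om}D^{\gm}_\zeta K^{01}_{0,q-1}(z,\zeta)\wedge\phi(\zeta)\Big\|_{H^{s+\frac12,p}(\Om)}\lesssim\|\phi\|_{H^{s+l,p}(\C^n)},\qquad |\gm|\le l,
\end{equation*}
valid for every $\phi\in H^{s+l,p}(\C^n)$ supported in $\overline\Om^c$. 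Heuristically, $K^{01}$ supplies a $\frac12$-derivative gain (as in \cite{S-Y21-1}), while $|\gm|$ derivatives on the kernel contribute at most a loss of $|\gm|$ derivatives; since $|\gm|\le l$, the net loss of $l-\frac12$ derivatives exactly matches the gap between $H^{s+l,p}$ and $H^{s+\frac12,p}$. Summing over $|\gm|\le l$ and applying \rl{comm_antider} then gives the theorem.

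To establish the kernel estimate I would invoke Gong's \cite{Gong19} explicit pointwise bounds on derivatives of the regularized Henkin-Ramirez kernel, which for $C^{k+2}$ boundary control $|D^a_zD^b_\zeta K^{01}(z,\zeta)|$ in terms of negative powers of $|z-\zeta|$ and $\dist(\zeta,b\Om)$ up to order $k+1$ in each variable. Combining these with Hardy-type estimates (Proposition \ref{Prop::C3}) that exploit the vanishing of $\phi$ on $\Om$, and with the Littlewood-Paley characterization (Proposition \ref{Prop::L-P}) to reduce the Sobolev norm to an $L^p(\ell^2)$ estimate as in the proof of Proposition \ref{T_prop}, one can push the bounds through. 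I expect the main obstacle to be the extremal case $|\gm|=l=k+1$, in which the kernel is as singular as the $C^{k+2}$-regularity of $b\Om$ allows; here the boundary vanishing of $\phi=\Sc^{l,\gm}_\Om[\dbar,\Ec]\var$ and its higher Sobolev regularity must together absorb exactly the singularity of $D^{\gm}K^{01}$, most cleanly via a dyadic decomposition of $\Uc\sm\overline\Om$ into boundary shells $\{2^{-(j+1)}<\dist(\zeta,b\Om)\le 2^{-j}\}$ on which $D^{\gm}K^{01}$ can be treated as a smoothing operator of controlled order.
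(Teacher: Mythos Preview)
Your overall architecture matches the paper: the split $\Hc_q=\Hc_q^0+\Hc_q^1$, the treatment of $\Hc_q^0$ via convolution with Bochner--Martinelli components (this is exactly Proposition~\ref{Prop::H0_est}), and the observation that the anti-derivatives push $[\dbar,\Ec]\var$ into a positive-index Sobolev space. One correction: $[\dbar,\Ec]\var$ lies in $H^{s-1,p}$, not $H^{s,p}$, so $\Sc^{l,\gm}_\Om[\dbar,\Ec]\var\in H^{s+l-1,p}$; with $l\ge k+1$ and $s>\frac1p-k$ this still gives $s+l-1>\frac1p>0$, so positivity survives, but your derivative-counting heuristic is off by one throughout.

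Where your proposal diverges from the paper, and where it has a genuine gap, is in the execution for $\Hc_q^1$. You propose to control $\|\Hc_q^1\var\|_{H^{s+\frac12,p}(\Om)}$ via Littlewood--Paley and a dyadic boundary-shell decomposition, invoking the $L^p(\ell^2)$ machinery of Proposition~\ref{T_prop}. But $K^{01}$ is not a convolution kernel---it depends on $z$ and $\zeta$ separately through the Leray map $W(z,\zeta)$---so Proposition~\ref{T_prop} does not apply, and it is not clear how to make a shell-by-shell argument produce an $H^{s+\frac12,p}$ norm on the output. The paper instead proves a \emph{weighted} estimate $\|\delta^{m-s-\frac12}D^r\Hc_q^1\var\|_{L^p(\Om)}\lesssim\|\var\|_{H^{s,p}(\Om)}$ for all $0\le r\le m$ and $m$ large (Propositions~\ref{Prop::Hq_Ckbdy} and \ref{lord_est}), via a Schur test: H\"older with a free parameter $\eta$, Fubini, and the explicit integral bound of Lemma~\ref{lem::intest}; your Proposition~\ref{Prop::C3} enters here to convert $\|\delta^{-(s+l-1)}\Sc^l f\|_{L^p}$ to $\|\Sc^l f\|_{H^{s+l-1,p}_0}$. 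The weighted estimate is then upgraded to an honest $H^{s+\frac12,p}(\Om)$ bound via the reverse Hardy--Littlewood inequality of Proposition~\ref{H-L}, which is the key bridge you do not mention. Without that last step (or an equivalent), your outline has no mechanism to pass from pointwise kernel bounds to the fractional Sobolev norm on $\Om$.
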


	\rt{Thm::EstHq} allows us to prove a homotopy formula under much weaker regularity assumption and \rt{Thm::Ck_bdy_intro} follows as a result.  
	\begin{cor}[Homotopy formula] \label{Cor::hf} 
		Under the assumptions of \rt{Thm::EstHq}, suppose $\varphi\in H^{s,p}_{0,q}(\Om)$ satisfies $\dbar\varphi\in H^{s,p}_{(0,q+1)}(\Om)$, for $1\le q\le n$ and $s>\frac1p-k$. Then the following homotopy formula holds in the sense of distributions: 
		\begin{equation}\label{hf_eqn}
		\varphi=\dbar \Hc_q\varphi+\Hc_{q+1}\dbar\varphi.
		\end{equation}
		In particular for $\varphi$ in $\Omega$ with $H^{s,p}_{(0,q)}(\Om)$ which is $\dbar$-closed, we have $\dbar \Hc_q \var = \var$ and $\Hc_q \var \in H^{s+\frac12,p}_{ (0,q-1)} (\Om)$. 
	\end{cor}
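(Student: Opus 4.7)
The plan is to first verify the identity for smooth $\var\in C^\infty(\ov\Om)$, then pass to the general case by approximating $\var$ in the \emph{graph norm} $\|\var\|_{H^{s,p}(\Om)}+\|\dbar \var\|_{H^{s,p}(\Om)}$ by smooth forms, and finally invoke the continuity of $\Hc_q$ and $\Hc_{q+1}$ from \rt{Thm::EstHq}.

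For smooth $\var$, the idea is to reverse the construction of $\Hc_q$: in the second integral of \re{Hq_def}, integrate by parts in the $\zeta$-variable to move the derivatives $D^{\gm}$ off $K^{01}$ and onto $\Sc^{l,\gm}_{\Om}[\dbar,\Ec]\var$. The boundary contributions on $b\Om$ vanish because \rt{Thm::Lip_anti}\ref{Item::anti-der_intro::3} gives $\Sc^{l,\gm}_{\Om}[\dbar,\Ec]\var\equiv 0$ on $\Om$, which in the smooth case extends by continuity to $b\Om$; the contributions on $b\Uc$ vanish by the compact support built into the cutoffs. Combined with the decomposition $f=\sum_{|\gm|\le l}D^{\gm}\Sc^{l,\gm}_{\Om}f$ from \rt{Thm::Lip_anti}\ref{Item::anti-der_intro::2}, this collapses $\Hc_q\var$ to
\[
\Hc_q\var=\int_\Uc K^0\we\Ec\var+\int_{\Uc\sm\ov\Om}K^{01}\we[\dbar,\Ec]\var,
\]
which is a classical Koppelman-Henkin-Ramirez integral representation. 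The homotopy identity $\var=\dbar\Hc_q\var+\Hc_{q+1}\dbar\var$ on $\Om$ for this classical operator, built from Gong's regularized Henkin-Ramirez kernels, is the standard Koppelman formula established in \cite{Gong19} and \cite{S-Y21-1}.

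For the density step, I would employ a Friedrichs-type shift-and-mollify procedure adapted to the Lipschitz boundary. Using \ref{Note::PartitionUnity} to localize, the problem reduces to a special Lipschitz domain $\om$ with cone $\Kb$ satisfying $\om+\Kb\seq\om$. Fix a unit vector $v\in\Kb$, set $\wti\var:=\Ec\var\in H^{s,p}(\R^{2n})$, and define $\var_\eps(z):=\wti\var(z+\eps v)$ for small $\eps>0$. Since translation commutes with $\dbar$ and $(\dbar\wti\var)|_\om=\dbar\var$, the cone property $\om+\Kb\seq\om$ ensures $(\dbar\var_\eps)|_\om(z)=(\dbar\var)(z+\eps v)$, and continuity of translation on $H^{s,p}(\R^{2n})$ then yields both $\var_\eps\to\var$ and $\dbar\var_\eps\to\dbar\var$ in $H^{s,p}(\om)$ as $\eps\to 0^+$. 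A subsequent smooth convolution $\rho_\delta\ast\var_\eps$ with $\delta\ll\eps$ produces $C^\infty(\ov\om)$ forms preserving both convergences (since mollification also commutes with $\dbar$). Reassembling by partition of unity furnishes $\var_j\in C^\infty(\ov\Om)$ with $\var_j\to\var$ and $\dbar\var_j\to\dbar\var$ in $H^{s,p}(\Om)$.

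To conclude, apply the smooth case to each $\var_j$, obtaining $\var_j=\dbar\Hc_q\var_j+\Hc_{q+1}\dbar\var_j$. By \rt{Thm::EstHq}, both $\Hc_q\var_j\to\Hc_q\var$ and $\Hc_{q+1}\dbar\var_j\to\Hc_{q+1}\dbar\var$ in $H^{s+\yh,p}(\Om)$, so $\dbar\Hc_q\var_j\to\dbar\Hc_q\var$ in $\Dc'(\Om)$, and the identity passes to the distributional limit. The ``in particular'' statement follows immediately by specializing to $\dbar\var=0$. The main delicacy is the graph-norm density: a naive mollification of $\Ec\var$ gives convergence of $\dbar$ only in $H^{s-1,p}$, and it is precisely the preliminary translation into the cone $\Kb$ that allows $H^{s,p}$ convergence of $\var_j$ and $\dbar\var_j$ simultaneously.
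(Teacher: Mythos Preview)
Your proposal is correct and follows essentially the same approach as the paper: integration by parts for smooth $\var$ (using the vanishing of $\Sc^{l,\gm}_\Om[\dbar,\Ec]\var$ on $\ov\Om$) to reduce to the classical Koppelman--Leray formula from \cite{Gong19}, followed by graph-norm approximation and the continuity from \rt{Thm::EstHq}. Your two-step translate-then-mollify argument is a standard variant of the paper's one-step convolution with kernels supported in $-\Kb$ (packaged there as \rp{Prop::Sobolev_smoothing}); both handle the partition-of-unity cross terms $(\dbar\chi_\nu)\wedge\var$ in the same way.
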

	\begin{proof}
		For simplicity we will write $\Sc^{l, \gm}$ instead of $\Sc^{l,\gm}_{\Om}$. 
		First we show that \eqref{hf_eqn} is valid for $\varphi \in C^\infty_{(0,q)}(\overline{\Om})$.  
		Indeed, when $\varphi\in C^\infty$, by \rl{comm_antider} \ref{comm_antider_1} $\Sc^{l,\gm} ([\dbar, \Ec] \var) \in C^\infty_{c}(\Uc)$. 
		By \rl{comm_antider} \ref{comm_antider_2},  $\Sc^{l, \gm} ([\dbar, \Ec] \var)|_{\Omega} \equiv 0$ which implies $\Sc^{l,\gm}([\dbar,\Ec] \var)|_{b\Om}\equiv0$ by continuity. 
		Applying integration by parts on the domain $\U\sm\Om$ we get for $z \in \Om$, 
		\begin{align*}
		\Hc_q \varphi(z) :=& \int_{\Uc} K^0_{0, q-1}(z,\cdot) \wedge \Ec \var  + (-1)^{|\gm|} \sum_{|\gm| \leq l} \int_{\Uc \sm\ov \Om} D^{\gm}(K^{01}_{0, q-1}(z,\cdot)) \wedge \Sc^{l,\gm}  ([\dbar,\Ec]\varphi)  
		\\
		=&\int_{\Uc} K^0_{0, q-1}(z,\cdot) \wedge \Ec \var + \int_{\Uc\sm\ov\Omega}  K^{01}_{0, q-1}(z,\cdot) \wedge  \sum_{|\gm| \leq l}D^{\gm} \Sc^{l,\gm}([\dbar,\Ec]\varphi)
		\\
		=&\int_{\Uc} K^0_{0, q-1}(z,\cdot) \wedge \Ec \var + \int_{\Uc\sm\ov\Omega}  K^{01}_{0, q-1}(z,\cdot) \wedge[\dbar,\Ec]\varphi.
		\end{align*}
		By \cite[Proposition 2.1]{Gong19}, 
		$\var = \dbar \Hc_q \var + \Hc_{q+1} (\dbar \var)$. 
		For general $\var \in H^{s,p}_{(0,q)}(\Om)$ such that $\dbar\varphi\in H^{s,p}_{(0, q+1)}(\Om)$, we use approximation. In view of \rp{Prop::Sobolev_smoothing}, we can find a sequence $\var_{\ve}\in C^{\infty}(\ov \Om)$ such that 
		\begin{gather*}
		\var_{\ve} \overset{\ve \to 0}{\ra} \var \quad \text{in $H^{s,p}(\Om)$ },  
		\\ 
		\dbar \var_{\ve} \overset{\ve \to 0}{\ra}  \dbar \var \quad \text{in $H^{s,p}(\Om)$}. 
		\end{gather*} 
		By \rt{Thm::EstHq} we have for any $s> -k + \frac{1}{p}$,  
		\begin{align*} 
		\| \dbar  \Hc_q (\var_{\ve} - \var) \|_{H^{s-\yh, p} (\Om)} 
		&\leq \|  \Hc_q (\var_{\ve} - \var) \|_{H^{s+\yh, p} (\Om)}  
		\\ &\leq \| \var_{\ve} -  \var \|_{H^{s,p}}, 
		\end{align*}
		and also  
		\[
		\| \Hc_{q+1} \dbar (\var_{\ve} - \var) \|_{H^{s+\yh, p}(\Om)} 
		\leq \| \dbar (\var_{\ve}- \var)  \|_{H^{s,p} (\Om)}. 
		\]
		Letting $\ve \to 0$ we get \re{hf_eqn}. 
	\end{proof} 
\begin{rem} \label{Rem::l_ind} 
 In the expression for $\Hc_q$ (see \re{H0H1}), we use $l$ to denote the number of anti-derivatives. In view of \rt{Thm::EstHq}, we can show that $\Hc_q^1$ actually does not depend on $l$, for $l \geq k+1$. This is seen as follows. Denote 
\[
  T_q^l \var:= \sum_{|\gm| \leq l}(-1)^{|\gm|} \int_{\U \sm \ov\Om} D^{\gm}_\zeta K^{01}_{0,q} \we \Sc^{l,  \gm}_{\Om} [\dbar, \Ec ]\var 
\] 
By the proof of \rc{Cor::hf}, we have $T_q^{l_1} \var = T_q^{l_2} \var$ for $\var \in C^{\infty}(\ov \Om)$ for any $l_1, l_2$, and is equal to 
\[
  T \var (z)= \int_{\Uc\sm\ov\Omega}  K^{01}_{0, q-1}(z,\cdot) \wedge[\dbar,\Ec]\varphi, \quad \var \in C^{\infty}(\ov \Om). 
\] 
For general $\var \in H^{s,p}(\Om)$, $s> \frac1p -k$, we can take $\var_{\ve} \in C^{\infty}(\ov{\Om})$ such that $\| \var_\ve - \var \|_{H^{s,p}(\Om)} \to 0$. By \rt{Thm::EstHq}, for any $l \geq k+1$ and $s>\frac1p -k$, $T^l_q$ defines a bounded linear operator from $H^{s,p}(\Om) \to H^{s+\yh, p} (\Om)$. Thus we have
\[
 T^l_q \var = \lim_{\ve \to 0} T^l_q \var_{\ve},  
\] 
where the limit is taken with respect to the norm $H^{s+\yh,p}(\Om)$. Hence $T^{l_1}_q \var = T^{l_2}_q \var$ for all $\var \in H^{s,p}(\Om)$, $s> \frac1p - k $ and any $l_1, l_2 \geq k +1$. 

Accordingly, we can define $\Hc^1_q$ on $H^{s,p}(\Om)$ with $s > \frac1p -k$, by setting 
\[
  \Hc^1_q \var := T^l_q \var, \quad \var \in H^{s,p}_{(0,q)}(\Omega), 
\] 
where $l$ is any positive integer greater or equal to $k+1$.  
\end{rem}
For the remainder of the section we will prove \rt{Thm::EstHq}. 
We adopt the following notation. 
	\begin{gather*}
	\Om_{\ve} = \{ z \in \C^n: \dist(z, \Om) < \ve \} ,    
	\quad \Om_{-\ve} = \{ z \in \Om: \dist(z, b\Om) > \ve \}. 
	\end{gather*} 
When the boundary is only finitely smooth, we need the following regularized Henkin-Ramirez function constructed by Gong \cite{Gong19}. 
\begin{lemma}
  Let $\Om$ be a bounded domain in $\C^{n}$ with $C^{k+2}$ boundary. Let $\rho_{0}$ be a $C^{k+2}$ defining function of $\Om$. That is, there exists a neighborhood $\mc{U}$ of $\ov{\Om}$ such that $\Om = \{ z \in \mc{U}: \rho_{0} < 0 \}$ and $\na \rho_0 \neq 0$ on $b\Om$. Let $\wti{\rho}_0$ be the Whitney extension of $\rho_0$ from $\Om$. Then $\wti{\rho_{0}} \in C^{k+2} (\C^{n}) \cap C^{\infty} (\C^{n} \sm \ov{\Om})$ with $\wti{\rho_0} = \rho_{0} $ in $\ov{\Om}$, and for $0 < \del(x):= \dist(x, \Om) < 1$, we have
\eq{regdfest}
	|D_{x}^{i} \wti{\rho_{0}} (x) | \leq C_{i} \| \rho_0  \|_{C^{k+2}(\ov\Om)} (1+ \delta(x)^{k+2-i} ), 
\eeq
for $i \in \Nb$.  
\end{lemma}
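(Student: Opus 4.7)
The claim is essentially the classical Whitney extension theorem applied to the defining function, so the plan is to invoke the standard construction and track constants carefully. Note first that although the statement only requires $\rho_0 \in C^2$ for the purposes of being a defining function, the estimate \eqref{regdfest} really needs control of derivatives of $\rho_0$ up to order $k+2$; this is available since $b\Om$ is $C^{k+2}$, so one may replace $\rho_0$ by a $C^{k+2}$ defining function (for example, a patchwork of local graph realizations $x_N - h(x')$ with $h\in C^{k+2}$, or the signed distance function near $b\Om$). From here on I treat $\rho_0 \in C^{k+2}(\ov\Om)$.

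To construct $\wti{\rho_0}$ on $\C^n\sm\ov\Om$, take a Whitney cube decomposition $\{Q_j\}$ with $\diam Q_j \approx \dist(Q_j,\ov\Om)$ and a subordinate smooth partition of unity $\{\phi_j\}$ on $\C^n\sm\ov\Om$ associated to slight dilations of the $Q_j$. For each $j$, pick a nearest boundary point $p_j\in b\Om$ to $Q_j$ and let $P_j$ be the degree $k+1$ Taylor polynomial of $\rho_0$ centered at $p_j$. Define
\[
\wti{\rho_0}(x) := \begin{cases} \rho_0(x), & x \in \ov\Om, \\ \sum_j \phi_j(x)\, P_j(x), & x \in \C^n \sm \ov\Om. \end{cases}
\]
Since each $P_j$ is a polynomial and $\{\phi_j\}$ is locally finite with smooth cutoffs, $\wti{\rho_0} \in C^\infty(\C^n \sm \ov\Om)$ is immediate. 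The boundary coherence uses the classical Whitney arguments built on the Taylor remainder $|P_j(x) - \rho_0(x)| \leq C \|\rho_0\|_{C^{k+2}}\, \delta(x)^{k+2}$ together with $\sum_j D^\alpha \phi_j \equiv 0$ for $|\alpha|\geq 1$, which jointly show that $\wti{\rho_0}\in C^{k+2}(\C^n)$ and matches $\rho_0$ to order $k+2$ along $b\Om$; in particular $\wti{\rho_0}\in C^2(\C^n)$ as stated.

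To prove \eqref{regdfest} I would split into two regimes. For $i \leq k+2$, the derivative $D^i\wti{\rho_0}$ is continuous on $\C^n$, hence bounded on bounded neighborhoods of $\ov\Om$ by $C_i \|\rho_0\|_{C^{k+2}}$; this is absorbed into the $1$ of $1+\delta(x)^{k+2-i}$. For $i > k+2$, Leibnitz differentiation of $\sum_j \phi_j P_j$ gives a sum in which each derivative of $\phi_j$ contributes a factor $\lesssim \delta(x)^{-1}$ on any cube with $\diam Q_j \sim \delta(x)$, while $|D^\alpha P_j|\leq C\|\rho_0\|_{C^{k+2}}$ for $|\alpha|\leq k+1$ and $D^\alpha P_j \equiv 0$ for $|\alpha|> k+1$; counting derivatives yields $|D^i\wti{\rho_0}(x)| \leq C_i\|\rho_0\|_{C^{k+2}}\delta(x)^{k+2-i}$ for $0 < \delta(x) < 1$, which dominates the first term for $i>k+2$. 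Combining the two regimes produces the bound \eqref{regdfest}. The main technical obstacle, and the only step that is not bookkeeping, is the boundary-smoothness/Taylor-coherence assertion that $\wti{\rho_0}\in C^{k+2}(\C^n)$; this is exactly Whitney's original matching estimate and can be cited (e.g.\ from Stein, \emph{Singular Integrals}, Chapter VI) rather than reproduced.
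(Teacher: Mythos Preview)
The paper does not prove this lemma; it simply cites \cite[Lemma~3.7]{Gong19}. Your Whitney-extension sketch is exactly the construction in that reference, so the overall approach is correct and matches the paper.

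There is one genuine gap, however, in your derivation of the bound for $i>k+2$. The argument you give --- that $D^\beta P_j\equiv 0$ for $|\beta|>k+1$, so at most $k+1$ derivatives can land on $P_j$ and the rest land on $\phi_j$ at a cost of $\delta^{-1}$ each --- does not by itself produce $\delta(x)^{k+2-i}$. The terms in the Leibniz expansion with small $|\beta|$ (e.g.\ $|\beta|=0$) put $i$ derivatives on $\phi_j$ and give a contribution of order $\delta^{-i}$ (or $\delta^{1-i}$ after using $\rho_0(p_j)=0$), not $\delta^{k+2-i}$. To recover the correct power you must use the cancellation you already invoked for the boundary matching: fix a cube $Q_{j_0}\ni x$, write
\[
\sum_j \phi_j P_j = P_{j_0} + \sum_j \phi_j\,(P_j - P_{j_0}),
\]
note that $D^iP_{j_0}=0$ for $i>k+1$, and use the Whitney comparison estimate $|D^\beta(P_j-P_{j_0})(x)|\lesssim \|\rho_0\|_{C^{k+2}}\,\delta(x)^{k+2-|\beta|}$ for neighboring cubes. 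Then every Leibniz term contributes $\delta^{-|\alpha|}\cdot\delta^{k+2-|\beta|}=\delta^{k+2-i}$. So the $i>k+2$ case is not pure bookkeeping; it needs the same Taylor-coherence input you flagged as the ``main technical obstacle''. Your observation that the estimate genuinely requires the $C^{k+2}$ norm of a defining function, not merely the $C^2$ norm appearing in the displayed inequality, is also correct.
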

\begin{proof}
  See \cite[Lemma 3.7]{Gong19}.   
\end{proof}
We call $\wti{\rho_{0}}$ the \emph{regularized defining function with respect to $\rho_0$}.  

%\bro{Starting here every $\rho_0$ below is $\widetilde{\rho_0} $ right? Or you should swap $\rho_0$ and $\widetilde{\rho_0}$ in the above lemma}
\begin{prop}[Regularized Henkin-Ramirez function] \label{Prop::Reg_H-R}

Let $\Om \subset \C^n$ be strictly pseudoconvex with $C^{k+2}$ boundary, with $k \geq 0$. Let $\rho_1 = e^{L_{0} \rho_{0} -1}$, where $L_0$ is sufficiently large so that $\rho_1$ is strictly plurisubharmonic in a neighborhood $\om$ of $b\Om$. Let $\rho$ be the regularized defining function with respect to $\rho_1$. Then there exist $\del_0 > 0$, $\ve>0$ and function $W$ (called regularized Leray map) in $\Om_{\ve} \times (\Om_{\ve} \sm \Om_{-\ve})$ satisfying the following. 
		\begin{enumerate}[(i)] 
		    \item \label{Item::Reg_H-R::1} $W: \Om_{\ve} \times (\Om_{\ve} \sm \Om_{-\ve}) \to \C^{n}$ is a $C^{k+1}$ mapping, $W(z, \zeta)$ is holomorphic in $z \in \Om_{\ve}$, and $\Phi (z, \zeta) = W(z, \zeta) \cdot (\zeta -z) \neq 0$ for $\rho(z) < \rho(\zeta)$. 
			\item
			\label{Item::Reg_H-R::2} If $|\zeta -z| < \del_0$, and $\zeta \in \Om_{\ve} \sm \Om_{-\ve} $, then $\Phi(z, \zeta) = F(z, \zeta) P(z, \zeta)$, $P(z, \zeta) \neq 0$ and 
			\[ 
			F(z, \zeta) = - \sum_{j=1}^n \DD{\rho}{\zeta_{j}} (z_{j} - \zeta_{j}) + \sum_{j,k=1}^n a_{jk} (\zeta) (z_{j} - \zeta_{j}) (z_{k} - \zeta_{k}), 				\] 
			\[ 
			Re F(z, \zeta) \geq \rho(\zeta) - \rho(z) + |\zeta -z|^{2} / C, 
			\]
			with $P,F \in C^{k+1}(\Om_{\ve} \times (\Om_{\ve} \sm \Om_{- \ve}))$ and $a_{jk} \in C^{\infty} (\C^{n})$. 
			\item[(iii)] 		\label{Item::Reg_H-R::3}
			For each $z \in \Om_{\ve}$, $\zeta \in \Om_{\ve} \sm \ov{\Om}, 0 \leq i, j < \infty$, the following holds:
			\eq{West}
			| D_{z}^{i} D_{\zeta}^{j} W(z, \zeta) | \leq C_{i,j}(\Om, \|\rho_{0}\|_{C^{k+2}(\ov{\Om})}, \del_0) (1+  \del(\zeta)^{k+1 -j}). 
			\eeq
\end{enumerate} 
	\end{prop}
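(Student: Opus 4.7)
My plan is to follow the classical Henkin--Ramirez construction of the holomorphic support function, adapted to low boundary regularity by replacing the usual defining function with the Whitney-regularized $\rho$. The standard construction costs one derivative when globalizing to a holomorphic-in-$z$ support function, which is why ordinary Henkin--Ramirez requires $C^{k+3}$ boundary to produce a $C^{k+1}$ Leray map; Gong's regularization recovers the missing derivatives off $\ov\Om$, at the price of polynomial blow-up of high-order $\zeta$-derivatives, as recorded in \re{regdfest} and which is exactly what propagates into \re{West}.

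First I would write down the Levi polynomial
\[
F(z,\zeta) = -\sum_j \DD{\rho}{\zeta_j}(\zeta)(z_j - \zeta_j) + \sum_{j,k} a_{jk}(\zeta)(z_j-\zeta_j)(z_k-\zeta_k),
\]
with $a_{jk} \in C^\infty$ chosen so that, using strict plurisubharmonicity of $\rho_1$ (hence of $\rho$) in a neighborhood of $b\Om$ plus the second-order Taylor expansion, one obtains $\RE F(z,\zeta) \geq \rho(\zeta) - \rho(z) + |\zeta - z|^2/C$ for $|\zeta - z| < \del_0$ with $\del_0$ sufficiently small. The Whitney bound \re{regdfest} applied to $\rho$ then gives $F \in C^{k+1}(\Om_\ve \times (\Om_\ve \sm \Om_{-\ve}))$ together with $|D_\zeta^j F| \ls (1 + \del(\zeta)^{k+2-j})$.

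Next I would globalize $F$ to a holomorphic-in-$z$ function $\Phi$ on $\Om_\ve \times (\Om_\ve \sm \Om_{-\ve})$ by a cut-off plus a $\dbar_z$-correction: with a smooth cutoff $\chi(z,\zeta)$ equal to $1$ on $|z-\zeta| < \del_0/2$ and vanishing on $|z-\zeta| > \del_0$, set $\Phi_0 = \chi F + (1-\chi) g$ for a suitable globally defined nonvanishing $g$, solve a $\dbar_z u = \dbar_z\log(\Phi_0/F)$ problem on the set where $\Phi_0/F$ is well defined, and put $\Phi = F e^u =: F \cdot P$ with $P$ nowhere vanishing. This yields the factorization of \ref{Item::Reg_H-R::2}, and the nonvanishing of $\Phi$ on $\{\rho(z) < \rho(\zeta)\}$ demanded in \ref{Item::Reg_H-R::1} follows from the inequality on $\RE F$ together with $P \neq 0$. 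A Hefer-type integral along the segment from $\zeta$ to $z$ then produces the Leray map
\[
W_j(z,\zeta) = -\int_0^1 \DD{\Phi}{z_j}(\zeta + t(z-\zeta),\zeta)\, dt,
\]
which is holomorphic in $z$ and satisfies $W \cdot (\zeta - z) = \Phi$. Differentiating under the integral and inserting the derivative bounds on $\rho$ and on $P$ produces \re{West}.

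The main obstacle is the $\dbar_z$-correction: one must verify that the solution operator preserves $C^{k+1}$-regularity across the product domain, that the resulting $P$ is nowhere zero, and above all that the $\zeta$-derivative losses are controlled by the polynomial growth rate $\del(\zeta)^{k+1-j}$ rather than being unbounded. Tracking this propagation of Whitney-type bounds through an integral representation of $u$, while using that $\rho \in C^\infty$ off $\ov\Om$, is the technical heart of Gong's argument in \cite[Section 5]{Gong19}.
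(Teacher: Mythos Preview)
Your sketch is correct and in fact supplies more detail than the paper does: the paper's own ``proof'' consists only of a reference to \cite[Proposition~5.1]{Gong19} for the case $k=0$, with the remark that the general case follows by the same construction. What you have outlined---Levi polynomial built from the regularized $\rho$, $\dbar_z$-globalization via a cutoff and logarithmic correction to obtain $\Phi = FP$, Hefer-type division to recover $W$, and propagation of the Whitney bound \eqref{regdfest} through these steps to obtain \eqref{West}---is precisely the architecture of Gong's argument in \cite[Section~5]{Gong19}, so there is no discrepancy of approach to report.

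One minor caveat worth flagging: the segment formula $W_j(z,\zeta)=-\int_0^1\partial_{z_j}\Phi(\zeta+t(z-\zeta),\zeta)\,dt$ requires the segment from $\zeta$ to $z$ to remain in the domain of holomorphy of $\Phi(\cdot,\zeta)$, which need not hold globally on $\Om_\ve$. In practice one either works on a slightly larger strictly pseudoconvex neighborhood (so $\Phi$ extends), or invokes a genuine Hefer decomposition rather than the naive integral. This is routine and handled in \cite{Gong19}, but since you wrote the formula globally it is worth noting.
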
 
\begin{proof}
  See \cite[Proposition 5.1]{Gong19} for the case $k=0$. The general case follows by similar construction and we omit the details. 
\end{proof} 

When the boundary is $C^{\infty}$, we have an analogous version of \rp{Prop::Reg_H-R}. In this case the usual Henkin-Ramirez function $W$ suffices. Namely, $W$ satisfies above properties \ref{Item::Reg_H-R::1} and \ref{Item::Reg_H-R::2}, and estimate \re{West} is replaced by 
\begin{equation} \label{West2}
  | D^i_z D^j_\zeta W (z, \zeta)| \leq C_{ij}, \quad 0 \leq i, j < \infty.   
\end{equation}

	We call $\Phi(z, \zeta)$ a holomorphic support function. 
	Near every $\zeta^{\ast} \in b\Om $, one can find a neighborhood $\mc{V}$ of $\zeta^{\ast}$ such that for all $z \in \V$, there exists a coordinate map $\phi_{z}: \mc{V} \to \C^n$ given by $\phi_{z}: \zeta \in \V \to (s, t) = (s_{1}, s_{2}, t_{3},\dots, t_{2n})$, where $s_1 = \rho(\zeta)$ (thus $s_1(\zeta) \approx \del(\zeta)$ for $\zeta\in\V\sm\Omega$). 
	Moreover for $z \in \V \cap \Om$, $\zeta \in \V \sm\overline \Om$, $\Phi  = W(z,\zeta) \cdot (\zeta -z)$ satisfies 
	\eq{Phie1}
	|\Phi (z, \zeta) | \geq c \left( \del(z) + s_{1} + |s_{2}| + |t|^{2} \right),  \quad \del(z) := \dist(z, b\Om), 
	\eeq
	\eq{Phie2}
	|\Phi(z, \zeta)| \geq c |z - \zeta|^{2}, \quad \quad |\zeta -z| \geq c( s_1 + |s_2| + |t|), 
	\eeq
	for some constant $c$ depending on the domain.  
	This is a consequence of \rp{Prop::Reg_H-R} \ref{Item::Reg_H-R::2}. The reader can also refer to \cite{Gong19} for details.   
	
	From now on, we shall fix an open set $\U=\U_{\Om}$ such that $ \Om \subset \subset \U \subset \subset \Om_{\ve} $, and we will use our extension operator $\Ec$ given in \rp{ext_op} with $\supp \Ec \var \subset \U$ for all $\var$. Our homotopy operator $\Hc_q $ has the form 
  \begin{equation} \label{Hqsum}
    	\Hc_q \var =  \Hc_q^0 \var + \Hc_q^1 \var,   
  \end{equation} 
where
	\begin{align} \label{H0H1} 
	\Hc_q^0 \var(z) := \int_{\U} K^0_{0, q-1}(z,\cdot) \we \Ec \var,  
	\quad  
	\Hc_q^1 \var(z) :=  \sum_{|\gm| \leq l}(-1)^{|\gm|} \int_{\U \sm \ov\Om} D^{\gm}(K^{01}_{0,q}(z,\cdot)) \we \Sc^{l,  \gm}_{\Om} [\dbar, \Ec ]\var, 
	\end{align} 
	Here $l$ can be chosen to be any integer greater or equal to $k+1$ (see \rp{Prop::Hq_Ckbdy}). The kernels $K^0$ and $K^{01}$ are given by 
	\begin{equation} \label{K0} 
	K^{0} (z, \zeta) = \frac{1}{(2 \pi i)^{n}} \frac{\left<\ov{\zeta} - \ov{z} \, , \, d \zeta \right>}{|\zeta -z |^{2}} \we \left( \dbar_{\zeta,z} \frac{ \left< \ov{\zeta} - \ov{z} \, , \, d \zeta \right>}{|\zeta -z|^{2}} \right)^{n-1}, \quad \dbar_{\zeta,z} = \dbar_{\zeta} + \dbar_{z} ; 
	\end{equation} 
	\begin{align} \label{K01} 
	& K^{0, 1} (z, \zeta) 
	= \frac{1}{(2 \pi i)^{n}} \frac{\left<\ov{\zeta} - \ov{z} \, , \, d \zeta \right>}{|\zeta -z |^{2}} \we \frac{\left< W(z,\zeta), d \zeta  \right>}{\left< W(z,\zeta) \, , \, \zeta -z \right>} 
	\\ \nonumber & \qquad \we \sum_{i+j=n-2} \left[ \frac{\left< d\ov{\zeta} - d\ov{z} \, , \, d \zeta \right>}{|\zeta -z |^{2}} \right]^{i}   \we \left[ \dbar_{\zeta,z} \frac{ \left< W(z,\zeta)  ,  d \zeta  \right>}{\left< W(z,\zeta), \zeta -z \right>} \right]^{j}.
	\end{align}
	We set $K^{1}_{0,-1} = 0$ and $K^{0,1}_{0,-1} =0$.

	\begin{prop} \label{Prop::H0_est} 
		Let $s \in \R$ and $1 < p < \infty$. 
		Suppose $\var \in H^{s,p}_{(0, q)} (\Om)$. Then $\Hc^0_q \var \in H^{s+1,p}_{(0, q-1)} (\C^n)$. More precisely, there exists a constant $C = C(\Om,s, p)$ such that 
		\begin{equation}  \label{H0est} 
		\| \Hc^0_q \var  \|_{H^{s+1,p} (\C^n)} \leq C \| \var \|_{H^{s,p} (\Om)}. 
		\end{equation} 
	\end{prop}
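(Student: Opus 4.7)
The key observation is that $K^0(z,\zeta)$ depends only on $w=\zeta-z$, so that after invoking the extension operator $\Ec$ from \rp{ext_op}, the operator $\Hc^0_q$ becomes a sum of convolutions against homogeneous convolution kernels on $\R^{2n}$. First I would expand \re{K0} and write
$$K^0_{0,q-1}(z,\zeta)=\sum_\alpha c_\alpha\,G_\alpha(\zeta-z)\,d\bar z^{I_\alpha}\we d\zeta^{J_\alpha},$$
where each $G_\alpha$ is smooth on $\R^{2n}\sm\{0\}$, rational in $(w,\bar w)$, and homogeneous of degree $-(2n-1)$; the prototype is $G(w)=\bar w_j/|w|^{2n}$. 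Each scalar component of $\Hc^0_q\var$ is then a finite linear combination of convolutions $G_\alpha\ast(\Ec\var)_{J_\alpha}$ on $\R^{2n}$, with $\supp(\Ec\var)\subset\Uc$ bounded by construction.

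The central analytic point is that each convolution $f\mapsto G_\alpha\ast f$ is bounded $H^{s,p}(\R^{2n})\to H^{s+1,p}(\R^{2n})$ when applied to inputs supported in a fixed bounded set (such as $\Uc$). To see this, observe that $G_\alpha$ is a first partial derivative (in $w$ or $\bar w$) of a kernel $\wti G_\alpha$ of homogeneity $-(2n-2)$; for the model $\bar w_j/|w|^{2n}=c\,\pa_{w_j}(|w|^{-(2n-2)})$, the function $|w|^{-(2n-2)}$ is (up to constants) the Newton kernel in $\R^{2n}$, so convolution with $\wti G_\alpha$ is an order $-2$ operator and composing with $\pa_{w_j}$ yields an operator of order $-1$. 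Equivalently, $\hht G_\alpha$ is smooth on $\R^{2n}\sm\{0\}$ and homogeneous of degree $-1$. Combining this with the boundedness of $\Ec:H^{s,p}(\Om)\to H^{s,p}(\R^{2n})$ from \rp{ext_op} would yield
$$\|\Hc^0_q\var\|_{H^{s+1,p}(\C^n)}\ls\sum_\alpha\|G_\alpha\ast(\Ec\var)_{J_\alpha}\|_{H^{s+1,p}(\R^{2n})}\ls\|\Ec\var\|_{H^{s,p}(\R^{2n})}\ls\|\var\|_{H^{s,p}(\Om)}.$$

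The main technical obstacle is the low-frequency singularity $\hht G_\alpha(\xi)\sim|\xi|^{-1}$ at $\xi=0$, which prevents $\hht G_\alpha$ from satisfying the Mikhlin condition globally, so the convolution is \emph{not} a priori bounded $H^{s,p}(\R^{2n})\to H^{s+1,p}(\R^{2n})$ on arbitrary inputs. The resolution uses compact support of $\Ec\var$: split $G_\alpha=\chi G_\alpha+(1-\chi)G_\alpha$ with a cutoff $\chi$ supported near $0$. The piece $(1-\chi)G_\alpha$ is a Schwartz function, and convolution with it is smoothing of infinite order and causes no trouble on compactly supported inputs. The piece $\chi G_\alpha$ has Fourier transform agreeing with $\hht G_\alpha$ away from $\xi=0$ modulo a Schwartz correction, to which the Mikhlin multiplier theorem applies after dividing by $(1+|\xi|^2)^{1/2}$. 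Alternatively, one can directly invoke mapping properties of the Newton potential on compactly supported data followed by a single distributional derivative. This strategy is essentially that of the authors' companion paper \cite{S-Y21-1}, which I would adapt here.
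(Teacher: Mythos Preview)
Your structural reduction is correct and matches the paper exactly: $K^0$ is a convolution kernel, its scalar components are $c\,\bar w_j/|w|^{2n}=c'\,\partial_{z_j}\Nc$ with $\Nc$ the Newton potential in $\R^{2n}$, and composing with the extension $\Ec$ reduces matters to a fixed compactly supported input.

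However, your primary Fourier-side argument has a slip: with a physical-space cutoff $\chi$ near $0$, the piece $(1-\chi)G_\alpha$ is \emph{not} a Schwartz function --- it is smooth but decays only like $|w|^{-(2n-1)}$ at infinity --- so convolution with it is not ``smoothing of infinite order'' in any global $H^{s+1,p}(\R^{2n})$ sense. (On the bounded set $\Uc$ the output is $C^\infty$, which is all the paper actually uses downstream, but your claim as written is false.) The compactly supported piece $\chi G_\alpha$ can indeed be handled roughly as you say, but the tail needs its own treatment.

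Your ``alternative'' is in fact the paper's route, and it sidesteps this issue. The paper writes each component as $\partial_{z_j}\Nc\ast(\Ec\var)$, cites \cite[Proposition~3.2]{Shi21} for non-negative integer $s$, and for $s\le-1$ argues by duality: given $g\in H_0^{-s-1,p'}(\Uc)$ one has
\[
\langle D\Nc\ast f,\,g\rangle=-\langle f,\,D\Nc\ast g\rangle,
\]
and the already-established positive-index bound gives $D\Nc\ast g\in H^{-s,p'}_{\loc}$; since $f=\Ec\var$ is compactly supported in $\Uc$, the pairing is controlled by $\|f\|_{H^{s,p}}\|g\|_{H^{-s-1,p'}}$, so \rp{Prop::DualSpace}\,\ref{Item::DualSpace::H0Char} yields the $H^{s+1,p}$ bound. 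Interpolation fills in the remaining range of $s$. Replace the multiplier splitting with this three-step structure (positive integers by citation, $s\le-1$ by duality, interpolation) and your argument goes through.
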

	\begin{proof}
	The proof for positive integer $s$ can be found in \cite[Proposition 3.2]{Shi21}. We now prove the case $s \leq -1$. The rest is done by interpolation. 
	
	 $\Hc^0_q \var$ can be written as a linear combination of
		\[
		\int_{\U} \frac{\ov{\zeta^i - z^i}}{|\zeta- z|^{2n}} f(\zeta) \, dV(\zeta)  = \pa_{z_i} \Nc \ast f(z), 
		\]
		where $f$ is a coefficient function of $\Ec \var$ and $\Nc (x):= c_n \frac{1}{|x|^{2n-2}}$ is the Newtonian potential. In particular $f$ has compact support in $\U$. Let $g \in H^{-s -1,p'}_0 (\U)$, where $-s -1 \geq0$ ($s \leq -1$) and $\frac1p + \frac{1}{p'} = 1$. Then 
		\begin{align*}
		\left< D \Nc \ast f, g \right> 
		&= - \left< f, D \Nc \ast g \right>. 
		\end{align*} 
		Now $D \Nc \ast g$ is in $H^{-s,p'}_{\loc}(\C^n)$ by the result for positive index. Since $f \in H^{s,p}_c (\U)$ has compact support, we have 
		\[  
		| \left< f, D \Nc \ast g  \right> | 
		\leq \| f \|_{H^{ s,p} (\U)} \| D \Nc \ast g \|_{H^{-s,p' } (\U)} 
		\leq \| f \|_{H^{s,p}(\U)} \| g \|_{H^{-s-1,p'} (\U)}. 
		\]    
		Hence $|  \left< D \Nc \ast f, g \right> | \leq \| f \|_{H^{s,p} (\U)} \| g \|_{H^{-s-1,p'} (\U)}$. Since $H^{s+1,p} (\U) =  \left( H^{-s-1, p'}_0 (\U) \right)'$ (see \rp{Prop::DualSpace} \ref{Item::DualSpace::H0Char}), we see that $D \Nc \ast f \in  H^{s+1,p}(\U)$ and \re{H0est} holds. 
	\end{proof}
	
	In order to estimate $\Hc_q^1$, we need the following lemma. 
	
	\begin{lemma}\label{lem::intest} 
		Let $\beta \geq 0$, $\all > -1$, $\all < \beta - \yh $, and let $0 < \del < \yh $. 
		Define $\del(z):= \dist(z, b \Om)$. Then for any $z, \zeta \in \Uc$: 
		\[
		\int_{\Uc \sm \Om} \frac{ \del(\zeta)^{\all}  \, dV (\zeta)}{|\Phi(z, \zeta)|^{2+\beta} |\zeta -z|^{2n-3} } \leq C \del(z)^{\all - \beta + \yh}; \quad 
		\int_{\Om} \frac{ \del(z)^{\all} \, dV(z)}{|\Phi (z, \zeta)|^{2+\beta} |\zeta -z|^{2n-3} } \leq C \del(\zeta)^{\all - \beta + \yh}.  
		\] 
	where the constants depends only on $\alpha,\beta$, $\Om$ and $\Uc$. 
	\end{lemma}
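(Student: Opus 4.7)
The plan is to prove both estimates by reducing to a computation in local boundary coordinates via a partition of unity, and then carrying out the resulting elementary integrals. Since $|\Phi(z,\zeta)|$ and $|\zeta - z|$ are bounded below by positive constants whenever $(z,\zeta)$ stays away from $b\Om$, the contribution from such a region is trivially bounded. So I would fix a boundary point $\zeta^* \in b\Om$ and work in the associated neighborhood $\V$ equipped with the coordinates $\phi_z$ from the discussion after Proposition~\ref{Prop::Reg_H-R}, namely $\phi_z(\zeta) = (s_1, s_2, t) \in \R \times \R \times \R^{2n-2}$ with $s_1 = \rho(\zeta)$. The Jacobian of $\phi_z$ is uniformly bounded above and below in $z$, and $\phi_z$ may be arranged so that $\phi_z(z) \approx (-\delta(z),0,0)$, giving $|\zeta-z|$ comparable to $|(s_1+\delta(z),s_2,t)|$.

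In these coordinates $\delta(\zeta) \approx s_1$ for $\zeta \in \V \setminus \Om$ (so $s_1>0$), and \eqref{Phie1} yields $|\Phi(z,\zeta)| \gtrsim \delta(z) + s_1 + |s_2| + |t|^2$; we also have the trivial bound $|\zeta-z| \gtrsim |t|$, and hence $|\zeta-z|^{2n-3} \gtrsim |t|^{2n-3}$. Using only this last bound and passing to polar coordinates $dt = r^{2n-3}\,dr\,d\omega$ in $\R^{2n-2}$, the local contribution reduces to a constant times
\begin{equation*}
\int_0^{c_0} s_1^\alpha \, ds_1 \int_{-c_0}^{c_0} ds_2 \int_0^{c_0} \frac{dr}{(\delta(z) + s_1 + |s_2| + r^2)^{2+\beta}}.
\end{equation*}
Integrating successively, the $r$-integral is $\lesssim (\delta(z) + s_1 + |s_2|)^{-3/2-\beta}$ (valid since $\beta\ge 0$), the $s_2$-integral is $\lesssim (\delta(z)+s_1)^{-1/2-\beta}$, and the remaining $s_1$-integral $\int_0^{c_0} s_1^\alpha (\delta(z)+s_1)^{-1/2-\beta}\,ds_1$ is split at $s_1=\delta(z)$: the region $s_1<\delta(z)$ contributes $\delta(z)^{\alpha-\beta+1/2}$ using $\alpha>-1$, while the region $s_1>\delta(z)$ contributes the same exponent using $\alpha<\beta-1/2$.

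The second estimate follows by the same argument after interchanging the roles of $z$ and $\zeta$, using the symmetric form of the lower bound in \eqref{Phie1}: one fixes $\zeta \in \U\setminus\Om$ near $b\Om$ and parametrizes $z\in\Om$ via the analogous coordinate map, so that $\delta(z)$ now plays the role of the integration variable comparable to $s_1$. The main obstacle in making this rigorous is the careful verification that the coordinate map $\phi_z$ can be chosen depending on $z$ in a sufficiently uniform way so that the Jacobian bounds and the comparabilities $|\zeta-z|\gtrsim|t|$ and $|\zeta-z|\gtrsim s_1+\delta(z)$ hold with constants independent of $z$ in the parameter range of interest; this is standard given the $C^{k+2}$ regularity of $b\Om$ and the properties \eqref{Phie1} and \eqref{Phie2}. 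The pleasant simplification is that the half-integer exponent $2n-3$ in $|\zeta-z|^{2n-3}$ never actually causes trouble, since the bound $|\zeta-z|\gtrsim|t|$ exactly cancels the $r^{2n-3}$ factor from polar coordinates.
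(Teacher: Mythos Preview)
Your proof is correct and follows essentially the same route as the paper: partition of unity near $b\Omega$, reduction to the local coordinates $(s_1,s_2,t)$ satisfying \eqref{Phie1}--\eqref{Phie2}, and evaluation of the resulting elementary integral. The paper invokes \cite[Lemma~6.5]{S-Y21-1} for that final integral, whereas you carry out the computation directly using the weaker but sufficient bound $|\zeta-z|\gtrsim|t|$, which makes the cancellation of the $r^{2n-3}$ factor transparent.
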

	\begin{proof}
		By the remark after \rp{Prop::Reg_H-R}, near each $\zeta_0 \in b \Om$ there exists a neighborhood $\Vc_{\zeta_0}$ of $\zeta_0$ and a coordinate system $\zeta \mapsto ( s =(s_1, s_2), t ) \in \R^2 \times \R^{2n-2}$ such that
		\begin{align} \label{zeta_coord_est} 
		|\Phi(z, \zeta)| \gtrsim \del(z) + |s_1| + |s_2| + |t|^2,  \quad
		|\zeta - z| \gtrsim |(s_1, s_2, t)|.  
		\end{align}
		for $z \in \Vc_{\zeta_0} \cap \Om$ and $\zeta \in \Vc_{\zeta_0} \sm \Om$, with $|s_1 (\zeta) | \approx \del(\zeta)$. By switching the roles of $z$ and $\zeta$, we can also find a coordinate system $z \mapsto (\ti{s}= (\ti{s}_1, \ti{s}_2), \ti{t}) \in \R^2 \times \R^{2n-2}$ such that
		\begin{align} \label{z_coord_est} 
		|\Phi(z, \zeta)| \gtrsim \del(\zeta) + |\ti{s}_1| + |\ti{s}_2| + |\ti{t}|^2,  \quad
		|\zeta - z| \gtrsim |(\ti{s}_1, \ti{s}_2, t)| 
		\end{align} 
		for $z \in \Vc_{\zeta_0} \cap \Om$ and $\zeta \in \Vc_{\zeta_0} \sm \Om$, with $|\ti{s}_1 (z) | \approx \del(z)$. 
		By partition of unity in both $z$ and $\zeta $ variables and \re{zeta_coord_est}, we have up to a constant depending only $\Om$,  
		\begin{align*}
		\int_{\Uc \sm \Om} \frac{ \del(\zeta)^{\all}  \, dV (\zeta)}{|\Phi(z, \zeta)|^{2+\beta} |\zeta -z|^{2n-3} }
		&\lesssim \int_0^1 \int_0^1 \int_0^1 \frac{s_1^{\all} t^{2n-3} \, ds_1 \, ds_2 \, dt}{(\del(\zeta) + s_1 + s_2 + t^2) ^{2+\beta}  (\del + s_1+ s_2 + t)^{2n-3} }.  
		\end{align*} 
		By \cite[Lemma 6.5]{S-Y21-1}, the integral is bounded by a constant multiple of $\del(z)^{\all-\beta + \yh}$, which proves the first inequality. The second inequality follows by the same way. 
	\end{proof}

	\begin{prop} \label{Prop::Hq_Ckbdy}  Let $k$ be a positive integer and let $\Om\subset \C^n$ be a bounded strictly pseudoconvex domain with $C^{k+2}$ boundary. 
	For $q \geq 1$, let $\Hc_q^1 \var$ be given by \re{Hq_def}, where we choose $l$ to be any positive number greater or equal to $k+1$. Let $  s > -k + \frac{1}{p}$ and $1 < p < \infty$. Then for any non-negative integer $m>s+ k + \frac12$, there exists a constant $C = C(\Om,k,m,s,p)$ such that for all $\var\in H^{s,p}_{(0,q)}(\Omega)$,
		\[
		\| \del^{m - s-\yh} D^m \Hc_q^1 \var \|_{L^p(\Om)} \leq C \| \var \|_{H^{s,p} (\Om)}. 
		\]
	\end{prop}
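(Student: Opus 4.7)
The plan is to commute the $m$ derivatives in $z$ past the integral defining $\Hc_q^1\var$, use the anti-derivative structure to convert the low regularity of $[\dbar,\Ec]\var$ into a boundary-distance weighted $L^p$ bound via \rp{Prop::C3}, and close through a Schur-type argument against the kernel estimates coming from \re{West} and \rl{lem::intest}.

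Differentiating under the integral in the second summand of \re{H0H1} yields
\begin{equation*}
D^m_z\Hc_q^1\var(z) = \sum_{|\gm|\le l}(-1)^{|\gm|}\int_{\U\sm\ov\Om}D^m_z D^\gm_\zeta K^{01}_{0,q-1}(z,\zeta)\we f_\gm(\zeta)\,dV(\zeta),
\end{equation*}
where $f_\gm := \Sc^{l,\gm}_\Om[\dbar,\Ec]\var$. By \rl{comm_antider}, $\|f_\gm\|_{H^{s+l,p}(\C^n)}\lesssim \|\var\|_{H^{s,p}(\Om)}$ and $f_\gm|_\Om\equiv 0$, and after the cut-off making $\supp f_\gm\subset\U$ one has $f_\gm\in H^{s+l,p}_0(\U\sm\ov\Om)$ by \rp{Prop::DualSpace} \ref{H_0space} (whose threshold $s+l>1/p-1$ holds since $l\ge k+1$ and $s>1/p-k$). \rp{Prop::C3}, applied on $\U\sm\ov\Om$ where the boundary distance is comparable to $\delta(\zeta):=\dist(\zeta,b\Om)$ on $\supp f_\gm$, then gives the weighted bound
\begin{equation*}
\|\delta^{-\si_\gm}f_\gm\|_{L^p(\U\sm\ov\Om)}\lesssim \|\var\|_{H^{s,p}(\Om)}, \qquad 0\le \si_\gm\le s+l.
\end{equation*}

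I next would establish the pointwise kernel estimate
\begin{equation*}
|D^m_z D^\gm_\zeta K^{01}(z,\zeta)|\lesssim \frac{1}{|\Phi(z,\zeta)|^{1+m+|\gm|}|\zeta-z|^{2n-3}},\qquad z\in\Om,\ \zeta\in\U\sm\ov\Om,
\end{equation*}
by direct differentiation of \re{K01} together with \re{West}; the choice $l = k+1$ is what keeps every $\zeta$-derivative of the regularized Leray map $W$ bounded. Inserting $1 = \delta(\zeta)^{\si_\gm}\delta(\zeta)^{-\si_\gm}$ and applying Schur's test with the kernel $\delta(z)^{m-s-\frac12}\delta(\zeta)^{\si_\gm}\,|D^m_z D^\gm_\zeta K^{01}|$ reduces matters to \rl{lem::intest} with $\alpha=\si_\gm$ and $\beta=m+|\gm|-1$ (and its symmetric counterpart). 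Choosing $\si_\gm$ close to $\max(0,\,s+|\gm|-1)$ makes the resulting exponents of $\delta(z)$ and $\delta(\zeta)$ nonnegative; the admissibility $\alpha<\beta-\frac12$ combined with the upper bound $\si_\gm\le s+l$ at the extremal index $|\gm|=l$ is what translates into the hypothesis $m>s+k+\frac12$. Combining these with the weighted $L^p$ bound on $f_\gm$ completes the estimate.

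The main obstacle is the pointwise kernel estimate itself: one must distribute $m+|\gm|$ derivatives among the $1/\Phi$- and $1/|\zeta-z|^2$-type factors in \re{K01} while using \re{West} only within its admissible range $|\gm|\le k+1$ (beyond which the $\zeta$-derivatives of $W$ pick up negative powers of $\delta(\zeta)$ that the weight $\delta^{-\si_\gm}$ can no longer absorb). A secondary subtlety is the joint balancing of the constraints $\si_\gm\in[0,s+l]$, $\si_\gm\ge s+|\gm|-1$, and $\si_\gm<m+|\gm|-1-\frac12$ across $|\gm|\in\{0,\dots,l\}$, which dictates the precise admissibility threshold on $m$; apart from this bookkeeping the scheme is a standard weighted Schur-type estimate enabled by the anti-derivative operators of \rt{Thm::Lip_anti}.
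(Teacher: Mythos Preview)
Your outline has the right architecture (differentiate under the integral, use the anti-derivatives to push $\Sc^{l,\gamma}[\dbar,\Ec]\varphi$ into a positive-index space, then close with a H\"older/Schur argument against \rl{lem::intest} and \rp{Prop::C3}), and this is exactly what the paper does. But the proposed pointwise kernel estimate
\[
|D^m_z D^\gamma_\zeta K^{01}(z,\zeta)|\lesssim |\Phi(z,\zeta)|^{-(1+m+|\gamma|)}|\zeta-z|^{-(2n-3)}
\]
is false, and this is not a technicality that the rest of the scheme survives. The kernel \re{K01} already contains a factor $\dbar_\zeta W$ (through $\dbar_{\zeta,z}\langle W,d\zeta\rangle/\Phi$), so after applying $D^\gamma_\zeta$ with $|\gamma|\le l$ one can have up to $l+1$ $\zeta$-derivatives on a single $W$. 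By \re{West} this gives $|D^{l+1}_\zeta W|\lesssim\delta(\zeta)^{k-l}$, which blows up as $\zeta\to b\Omega$. Even with your preferred choice $l=k+1$ one gets $|D^{k+2}_\zeta W|\lesssim\delta(\zeta)^{-1}$; the statement ``$l=k+1$ keeps every $\zeta$-derivative of $W$ bounded'' is therefore incorrect, and for general $l\ge k+1$ (which the proposition allows) the discrepancy is worse. There is no way to absorb $\delta(\zeta)^{k-l}$ into a power of $|\Phi|^{-1}$: since $|\Phi|\gtrsim\delta(\zeta)$ by \re{Phie1}, the quotient $\delta(\zeta)^{k-l}/|\Phi|^{m+1}$ can dominate any $|\Phi|^{-(1+m+|\gamma|)}$ in the regime where $\delta(\zeta)\to0$ with $|\zeta-z|$ bounded away from $0$.

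The paper handles this by isolating two extremal pieces rather than a single kernel bound: the ``$\Kc_0$'' piece with all $m+l$ derivatives on $\Phi$ (your estimate), and the ``$\Kc_1$'' piece with $l$ $\zeta$-derivatives on $W$ and $m$ on $\Phi$, carrying the explicit factor $\delta(\zeta)^{k-l}$ against the lower power $|\Phi|^{-(m+1)}$. Contrary to your final remark, the weight \emph{does} absorb this blow-up: one runs the same H\"older/Schur argument on $\Kc_1$, and the negative power $\delta(\zeta)^{k-l}$ is exactly compensated by the $H^{s+l-1,p}_0$ regularity of $\Sc^{l,\gamma}[\dbar,\Ec]\varphi$ through \rp{Prop::C3}. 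It is this $\Kc_1$ computation, not $\Kc_0$, that produces the threshold $s>\frac1p-k$ and the hypothesis $m>s+k+\frac12$; with only your $\Kc_0$-type bound one would get no $k$-dependence at all. (A minor point: $[\dbar,\Ec]\varphi\in H^{s-1,p}$, so $f_\gamma\in H^{s+l-1,p}$, not $H^{s+l,p}$; this shifts your $\sigma_\gamma$-bookkeeping by one but is not the essential issue.)
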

	\begin{proof} 
		We estimate the integral 
		\begin{equation} \label{main_int}  
		\int_{\Om} \del(z)^{(m - s-\yh)p} \left| \sum_{|\gm| \leq l}\int_{\U \sm \ov\Om} D_z^m D^{ \gm}_{\zeta} K(z, \cdot) \we \Sc^{l, \gm}([\dbar, \Ec] \var) \right|^p \, dV(z). 
		\end{equation}
		The idea is to choose $l$ large enough so that $\Sc^{l, \gm} [\dbar, \Ec] \var$ lies in some positive index space. Let $f$ be a coefficient function of $[\dbar, \Ec]\var$ so that $f \in H^{s-1,p}(\Uc \sm \ov{\Om})$. By $\Sc^{l, \gm}f $ we mean each component of $\Sc^{l, \gm} ([\dbar, \Ec]\var) $ . In the computation below we shall fix a multi-index $\gm$ and write $\Sc^{l}$ without causing ambiguity. 
		By Lemma \ref{comm_antider}, $\Sc^{l} f \in H^{s+l-1,p}(\Uc \sm \ov{\Om})$. Since $l \geq k+1$, and $s > \frac{1}{p} - k$, we have $ l > -s +1$, or $s + l -1 > 0$. Since $f \equiv 0 $ in $\Om$, by \rp{Prop::DualSpace} \ref{H_0space} we have $\Sc^{l} f \in H^{s +l -1,p}_0 (\U \sm \ov{\Om})$. 
		
We now estimate the inner integral in \re{main_int} which we shall denote by $\Kc f$. In view of \re{K01}, we can  write $D^m_z D^\gm_\zeta K$ as a linear combination of 
\[
		\int_{\U \sm \ov{\Om}} \Sc^l f(\zeta) \frac{P_{\mu_0, \mu_1} (\zeta,z) }{\Phi^{n-i+\mu_2} (z, \zeta) |\zeta -z|^{2i+\mu_3-1}} \, dV(\zeta), \quad \sum_{j=0}^3 \mu_j \leq m+l, \quad 1 \leq i \leq n-1, 
\]  
where $P_{\mu_0, \mu_1}$ takes the form   
\begin{equation} \label{P_mu0_mu1}  
\begin{gathered}
   \prod_{i=1}^n (\zeta_i - z_i)^{\thh_i} (\ov{\zeta_i - z_i} )^{\thh_i'} (D_z^{\si_1+1} D_\zeta^{\nu_1+1} W) \cdots  (D_z^{\si_L+1}   
D_\zeta^{\nu_L+1} W) (D_{z,\zeta} W) ;
 \\ 
 \sum_{i=1}^M \si_i \leq \mu_0 \leq m, \quad \sum_{j=1}^L \nu_j \leq \mu_1 \leq l, \quad \thh_i, \thh_i' \leq 
m+l,
\end{gathered}
\end{equation}
and $D_{z,\zeta} W $ denotes a product of first derivatives of $W$ in $z$ and $\zeta$.  
Since $ |\zeta -z|^2 \lesssim | \Phi (z, \zeta)| \lesssim |\zeta -z|$, it suffices to consider the case when $\mu_3= 0$ and $i = n-1$, namely, 
		\begin{equation} \label{int_der } 
		\int_{\U \sm \ov{\Om}} \Sc^l f(\zeta) \frac{P_{\mu_0,\mu_1} (\zeta,z) }{\Phi^{\mu_2+1} (z, \zeta) |\zeta -z|^{2n-3}} \, dV(\zeta), \quad \mu_0 + \mu_1 + \mu_2 \leq m + l. 
		\end{equation} 
By \re{West}, we have $|D_z^i  
D_{\zeta}^{j+1} W(z, \zeta) | \lesssim 1 + \del(\zeta)^{k-j}$, for $\zeta \in \U \sm \ov{\Om}$ and $k \geq 1$. We make the following observation: if we have already taken $k (<l)$ $\zeta$ derivatives on $W$, then the integral becomes worse if we distribute one more $\zeta$ derivative to $W$ rather than $\Phi$. On the other hand if we take less than $k$ $\zeta$-derivatives on $W$, then the integral becomes worse if we move all the $ \zeta$ derivatives to $\Phi$ instead. Therefore we only have to estimate \re{int_der } for $(\mu_0 = \mu_1 = 0, \mu_2 = m + l)$ and for $ (\mu_0 = 0, 
\mu_1 = l, \mu_2 = m) $, which we denote by $\mc{K}_0 $ and $\mc{K}_1$: 
		\begin{gather*} 
		\mc{K}_0 f (z) := \int_{\U \sm \ov{ \Om}} \Sc^l f (\zeta) \frac{P_{0,0}(\zeta,z)} 
{\Phi^{m+l+1}(z, \zeta)  |\zeta-z|^{2n-3}} \, dV(\zeta), 
		\\ 
		\mc{K}_1f (z) := \int_{\U \sm \ov{ \Om}} \Sc^l f (\zeta) \frac{P_{0,l}(\zeta,z) }{\Phi^{m+1}(z, \zeta)  |\zeta-z|^{2n-3}} \, dV(\zeta). 
		\end{gather*}  
		We will show that for any $s \in \R$, 
		\begin{align} \label{K0est}
		\int_{\Om} \del(z) ^{(m-s - \yh)p } |\Kc_0  f (z) |^p  \, dV(z)  
		\lesssim \|\Sc^l f \|_{H^{s-1+l, p} (\C^n)}^p, 
		\end{align} 
		if $m$ and $l$ are chosen to be sufficiently large, and also for any $ s > -k + \frac{1}{p}$, 
		\begin{align} \label{K1est} 
		\int_{\Om} \del(z)^{(m-s -k -\yh) p} |\Kc_1f (z)|^p \, dV(z)
		\leq \| \Sc^l f \|_{H^{s-1+l,p} (\C^n)}^p, 
		\end{align}  
		if $m$ is sufficiently large.  
		To simplify the notation we shall denote the kernel of $\Kc_0$ and $\Kc_1$ by $B_0$ and $B_1$ respectively. 
	By \re{West} and \re{P_mu0_mu1}, we see that 
 $|P_{0,0}(\zeta,z)| \lesssim 1$ and $|P_{0,l}(\zeta,z)| \lesssim \del(\zeta)^{k-l} $, for $k < l$ and $\del(\zeta) < 1$ (which is true for $\zeta \in \Uc \sm \ov \Om$).   
		Thus $B_0$ and $B_1$ satisfy 
		\begin{gather} \label{B0_est}  
		| B_0 (z,\zeta) | \lesssim \frac{1 }{|\Phi (z, \zeta)|^{m +l+1}  |\zeta -z|^{2n-3}};  
		\\ \label{B1_est} 
		| B_1 (z,\zeta) | \lesssim  \frac{\del(\zeta)^{k-l} }{|\Phi (z, \zeta)|^{m +1}  |\zeta -z|^{2n-3}}. 
		\end{gather}

	\medskip \noindent
	\tit{Case 1: Estimate of the $\Kc_0$ Integral.} 
	
		First we estimate \re{main_int} where the inner integral is $\Kc_0$. 
		Let $\eta$ be some number to be specified; we have
		\begin{align*}
		| \Kc_0 f (z) |  &\leq \int_{\U \sm \ov{\Om}} |  B_0 (z, \zeta) |^{\frac{1}{p}} 
		|  B_0 (z, \zeta) |^{\frac{1}{p'}} | \Sc^l f(\zeta) | \, dV(\zeta)
		\\ &=  \int_{\U \sm \ov{\Om}} \del(\zeta) ^{-\eta} | \del(\zeta)^{\eta} B_0 (z, \zeta) |^{\frac{1}{p}} 
		| \del(\zeta)^{\eta} B_0 (z, \zeta) | ^{\frac{1}{p'}} | \Sc^l f(\zeta) | \, dV(\zeta). 
		\end{align*}
		Applying H\"older's inequality we get  
		\begin{equation}  \label{K0_Holder}
		| \Kc_0 f (z) |^p \leq 
		\left[ \int_{\U \sm \ov{\Om}}  \del(\zeta)^{- \eta p +\eta} | B_0 (z, \zeta) | | \Sc^l f(\zeta)|^p \, dV (\zeta) \right] 
		\left[ \int_{\U \sm \ov{\Om}} \del(\zeta)^{\eta} | B_0 (z, \zeta) | \, dV(\zeta)  \right]^{\frac{p}{p'}}. 
		\end{equation} 
		By \re{B0_est} and \rl{lem::intest}, 
			\begin{align} \label{B0int}
		\int_{\U \sm\ov{\Om}} \del(\zeta)^{\eta} | B_0 (z, \zeta) | \, dV(\zeta) 
		&\leq \int_{ } \frac{\delta(\zeta)^\eta\, dV(\zeta) }{|\Phi (z, \zeta)|^{m +l+1}  |\zeta -z|^{2n-3}} 
		\\ \nn&\lesssim \del(z)^{\eta - (m+l-1) + \yh} 
		= \del(z)^{\eta - (m+l) +  \frac{3}{2}}, 
		\end{align} 
		where we applied \rl{lem::intest} using $\all = \eta$, $\beta = m+l-1$ and by choosing 
		\begin{equation}  \label{K0eta1}
		-1 < \eta < \beta - \yh =  m + l  - \frac{3}{2}. 
		\end{equation}
		Here the constant in the estimate depends only on the domain $\Om$ and the defining function $\rho$, and in particular is independent of $\zeta$. Using \re{K0_Holder} and \re{B0int} and applying Fubini's theorem gives 
		\begin{align} \label{K0_Fubini}  
		\int_{\Om} \del(z) ^{(m-s - \yh)p } |\Kc_0  f (z) |^p  \, dV(z)
		&\lesssim \int_{\Om} \del(z)^{\tau} \left( \int_{\U \sm \ov{\Om}} \del (\zeta)^{(1-p) \eta}  |B_0 (z, \zeta)| |\Sc^lf (\zeta)|^p \, dV(\zeta) \right) \, dV(z)
		\\ \nn &\lesssim  \int_{\U \sm \ov{\Om}}  \del(\zeta)^{(1-p) \eta} \left[ \int_{\Om} \del(z)^{\tau} | B_0 (z, \zeta) | \, dV (z) \right] |\Sc^l f(\zeta) |^p \, dV(\zeta),   
		\end{align}
		where 
		\begin{align} \label{K0tau}   
		\tau &= p \left( m-s-\yh \right) + \left( \eta - m - l + \frac{3}{2} \right) \frac{p}{p'}  &= -sp + \eta (p-1) + (1-l)p +  \left( m + l - \frac{3}{2}  \right). 
		\end{align}
		By estimate \re{B0_est}, the inner integral satisfies 
	\begin{equation} \label{B0int2}
		\int_{\Om} \del(z)^{\tau} |B_0(z, \zeta)| \, dV(z)
		\lesssim \int_{\Om} \frac{\del(z)^{\tau}}{|\Phi(z, \zeta)|^{m+l+1}|\zeta-z|^{2n-3}} \, dV(z).
\end{equation}
		To estimate this integral we apply \rl{lem::intest} with $\all =  \tau$ and $\beta = m + l -1$, and we need 
		$-1 <  \tau < \beta - \yh = m + l - \frac{3}{2}$; thus in view of \re{K0tau}, $\eta$ needs to satisfy
		\begin{equation} \label{K0eta2} 
		\frac{1}{p-1}  \left[ (s+ l -1) p - m -l + \yh\right]
		<  \eta < \frac{1}{p-1} (s+l-1) p.
		\end{equation} 
		where the first inequality follows from $\tau >-1$ and the second inequality follows from $\tau < m+l - \frac{3}{2}$. 
		Now, in view of \re{K0eta1} and \re{K0eta2}, we need to choose  
		\begin{equation}  \label{K0eta}
		\max \left\{ -1,    \frac{1}{p-1}  \left[ (s+ l -1) p - m -l + \yh\right] \right\}    
		< \eta < \min \left\{ m+l- \frac{3}{2}, \frac{1}{p-1} (s+l-1) p \right\}.
		\end{equation}
		An easy computation shows that the above admissible range for $\eta$ is non-empty if and only if $s$ satisfies 
		\begin{equation} \label{K0_s_range} 
		\frac{1}{p} -l  < s< m - \yh + \frac{1}{p}. 
		\end{equation}
		Now with $\eta$ chosen in the range \re{K0eta}, we can apply \rl{lem::intest} to the integral \re{B0int2} to get
		\begin{align*}
		\int_{\Om} \del(z)^{\tau} | B_0 (z, \zeta) | \, dV(z) 
		\lesssim \del(\zeta)^{\tau- (m+l-1) + \yh} = \del(\zeta)^{-sp + \eta(p-1) + (1-l) p }, 
		\end{align*} 
		where the constant is independent of $\zeta$.  Substituting the estimate into \re{K0_Fubini} we obtain 
		\begin{align*}
		\int_{\Om} \del(z) ^{(m-s - \yh)p } |\Kc_0  f (z) |^p  \, dV(z) 
		&\lesssim  \int_{\U \sm \ov{\Om}} \del(\zeta)^{(1-p) \eta} \del(\zeta)^{-sp + \eta(p-1) + (1-l) p } | \Sc^l f(\zeta) |^p \, dV(\zeta) 
		\\ &\lesssim \int_{\U \sm \ov{\Om}} \del(\zeta)^{(-s + 1 -l) p } | \Sc^l f (\zeta) |^p \, dV(\zeta) 
		\\ &\lesssim \|\Sc^l f \|_{H^{s-1+l, p} (\C^n)}^p ,  
		\end{align*}
		where in the last step we used \rp{Prop::C3} and $s-1+l \geq 0$.

		Note that in view of \re{K0_s_range}, we can choose $l$ and $m$ arbitrarily large so that the above estimate holds for any $s \in \R$. 
		
	\medskip 
\nid \tit{Case 2: Estimate of $\Kc_1$ integral.} 
	
		Next we prove estimate \re{K1est}. We shall show that   
		\[ 
		\int_{\Om} \del(z)^{(m-s -j -\yh) p} |\Kc_1f (z)|^p \, dV(z)
		\leq \| \Sc^l f \|_{H^{s-1+l,p} (\C^n)} ^p
		\]
		holds for any $j$ with $j \leq k$ and $s> -j + \frac{1}{p}$. By setting $j=k$ we get \re{K1est}.  
		By the same computation as in \re{K0_Holder} we see that
		\begin{align} \label{K1_Holder}  
		| \Kc_1 f (z) |^p \leq 
		\left[ \int_{\U \sm \ov{\Om}}  \del(\zeta)^{- \eta p +\eta} | B_1 (z, \zeta) | | \Sc^l f(\zeta)|^p \, dV (\zeta) \right] 
		\left[ \int_{\U \sm \ov{\Om}} \del(\zeta)^{\eta} | B_1 (z, \zeta) | \, dV(\zeta)  \right]^{\frac{p}{p'}}. 
		\end{align} 
		where $\eta$ is to be chosen. Using \re{B1_est} we have
		\begin{align*}   
		\int_{\U \sm \ov{\Om}} \del(\zeta)^{\eta} |B_1(z, \zeta)| \, dV(\zeta)
		&\leq \int_{\U \sm \ov{\Om}}  \frac{\del(\zeta)^{\eta+k-l} }{|\Phi (z, \zeta)|^{m+1} |\zeta-z|^{2n-3}} \, dV(\zeta). 
		\end{align*} 
		We now apply \rl{lem::intest} with $\all = \eta + k -l$ and $\beta = m-1$ to get 
		\begin{equation} \label{B1_zeta_int} 
		\int_{\U \sm \ov{\Om}}  \del(\zeta)^{\eta} |B_1(z, \zeta)| \, dV(\zeta) 
		\lesssim \del(z)^{\eta+k-l - (m-1) + \yh} =\del(z)^{\eta+k -l-m + \frac{3}{2}}. 
		\end{equation} 
		Here we need to choose  $-1 < \eta+ k-l < \beta - \yh = m -\frac{3}{2}$ or  
		\begin{equation} \label{K1eta1}
		l - k - 1 < \eta < l - k + m - \frac{3}{2}. 
		\end{equation}
		Applying Fubini's theorem and using \re{K1_Holder} and \re{B1_zeta_int}, we have 
		\begin{align} \label{K1_Fubini}  
		\int_{\Om} \del(z)^{(m-s-j-\yh)p } |\Kc_1 f(z) |^p  \, dV(z) 
		&\lesssim  \int_{\Om} \del(z)^{\kappa} \left( \int_{\U \sm \ov{\Om}} \del (\zeta)^{(1-p) \eta}  |B_1 (z, \zeta)| | \Sc^l f (\zeta)|^p \, dV(\zeta) \right) \, dV(z)
		\\ \nn &= \int_{U \sm \ov{\Om}}  \del(\zeta)^{(1-p) \eta} \left[ \int_{\Om} \del(z)^{\kappa} | B_1 (z, \zeta) | \, dV (z) \right] | \Sc^l f(\zeta) |^p \, dV(\zeta), 
		\end{align}  
		where 
		\begin{equation} \label{K1kappa}  
		\begin{aligned}
		\kappa &= \left( m-s-j- \yh \right)p + \frac{p}{p'} \left(  \eta + k - l - m + \frac{3}{2} \right)
		\\  &=  (-s-l+1) p + \eta(p-1) + l-k + p(k-j) + m - \frac{3}{2} . 
		\end{aligned} 
		\end{equation}
		By estimate \re{B1_est}, we have 
		\begin{align} \label{B1_z_int}  
		\int_{\Om} \del(z)^{\kappa} | B_1 (z, \zeta) | \, dV (z) 
		&= \del(\zeta)^{ k-l} \int_{\Om} \frac{\del(z)^{\kappa} }{|\Phi(z, \zeta)|^{m+1} |\zeta-z|^{2n-3}} \, dV(z). 
		\end{align}
		To estimate this integral we apply \rl{lem::intest} with $\all = \kappa$ and $\beta = m-1$, which requires $-1 < \kappa < \beta -  \yh = m - \frac{3}{2} $. In view of \re{K1kappa}, $\eta$ needs to satisfy
		\begin{equation} \label{K1eta2}
		p' (s+l-1 + j-k) + \frac{1}{p-1} (k-l-m + \yh) < \eta < p'( s+ l -1+ j-k) + \frac{k-l}{p-1}. 
		\end{equation}
		Now for the existence of $\eta$ satisfying both \re{K1eta1} and \re{K1eta2}, $s$ needs to satisfy
		\begin{align} \label{K1_s_range} 
		- j  + \frac{1}{p} < s < -j + \frac{1}{p} + m - \yh. 
		\end{align} 
		In particular for any $s > -j + \frac{1}{p}$, we can choose $m$ large enough so that \re{K1_s_range} holds. 
		With $\eta$ chosen to be in the range \re{K1eta1} and \re{K1eta2}, we can then apply \rl{lem::intest} to integral \re{B1_z_int} to get     
		\[
		\int_{\Om} \frac{\del(z)^{\kappa} }{|\Phi(z, \zeta)|^{m+1} |\zeta-z|^{2n-3}} \, dV(z)  
		\lesssim \del(\zeta)^{\kappa - (m-1) + \yh} 
		= \del(\zeta)^{(-s-l+1)p + \eta(p-1) + l-k + p(k-j)}, 
		\]
		where the constant is independent of $\zeta$.
		Therefore from \re{B1_z_int} we get
		\begin{align} \label{B1_zint_est}
		\int_{\Om} \del(z)^{\kappa} | B_1 (z, \zeta) | \, dV (z) 
		\lesssim \del(\zeta)^{(-s-l+1)p + \eta(p-1) + p(k-j)}. 
		\end{align}
		
		Now by using \re{K1_Fubini} and \re{B1_zint_est} we get 
		\begin{align*}
		\int_{\Om} \del(z)^{(m-s-j-\yh)p } |\Kc_1 f(z) |^p  \, dV(z)  
		&\lesssim \int_{\U \sm \ov{\Om}} \del(\zeta)^{(1-p) \eta + (-s-l+1)p + \eta(p-1)  + p(k-j)} | \Sc^l f(\zeta) |^p \, dV(\zeta) 
		\\ &\lesssim \int_{\U \sm \ov{\Om}} \del(\zeta)^{(-s -l+1) p + p(k-j)}  | \Sc^l f(\zeta) |^p \, dV(\zeta) 
		\\ &\lesssim \| \Sc^l f \|_{H^{s-1+l,p} (\C^n)}^p , 
		\end{align*}
		where in the last inequality we use $k \geq j$ and \rp{Prop::C3} with $s-1+l \geq 0$. This proves \re{K1est} and concludes the proof.  
	\end{proof} 
	
	Next we extend the weighted estimate of \rp{Prop::Hq_Ckbdy} to all lower-order derivatives. 
	\pr{lord_est}
	Keeping the assumptions of \rp{Prop::Hq_Ckbdy}, one has the following
	\[
	\| \del(z)^{m-s-\yh} D^{r} \Hc^1_q \var \|_{L^p (\Om)} \leq C (\Om, p) \| \var \|_{H^{s,p}(\Om)}, 
	\quad 0 \leq r \leq m. 
	\]
	\epr 
	\begin{proof} 
	Following the proof of \rp{Prop::Hq_Ckbdy}, it is sufficient to show that for any $s \in \R$,
	\begin{align} \label{K0'est}
	\int_{\Om} \del(z) ^{(m-s - \yh)p } |\Kc'_0  f (z) |^p  \, dV(z)  
	\lesssim \|\Sc^l f \|_{H^{s-1+l, p} (\C^n)}^p, \quad f = [\dbar, E] \var
	\end{align} 
	if $m$ and $l$ are chosen to be sufficiently large. And also for any $ s > -k + \frac{1}{p}$, 
	\begin{align} \label{K1'est} 
	\int_{\Om} \del(z)^{(m-s -k -\yh) p} |\Kc'_1 f (z)|^p \, dV(z)
	\leq \| \Sc^l f \|_{H^{s-1+l,p} (\C^n)}, 
	\end{align}  
	if $m$ is sufficiently large. Here  
	
	\begin{gather} 
	\mc{K}'_0 f (z) := \int_{\U \sm \ov{ \Om}} \Sc^l f (\zeta) \frac{P_{0,0}(\zeta,z)}{\Phi^{r +l+1}(z, \zeta)  |\zeta-z|^{2n-3}} \, dV(\zeta), \quad 1 \leq r \leq m;  
	\\ 
	\mc{K}'_1f (z) := \int_{\U \sm \ov{ \Om}} \Sc^l f (\zeta) \frac{P_{0,l}(\zeta, z)}{\Phi^{r+1}(z, \zeta)  |\zeta-z|^{2n-3}} \, dV(\zeta), \quad 1 \leq r \leq m,  
	\end{gather}  
with $P_{0,0}, P_{0,l}$ given by formula \re{P_mu0_mu1}.  
	Now $\mc{K}'_0 f$ and $\mc{K}'_1 f$ are both $C^{\infty}$ for $z$ in the interior of $\Om$. By a partition of unity we can assume that $z \in \V \cap \Om$ and $\zeta \in \V \sm \ov{\Om}$, where $\V$ is the neighborhood in the remark right after \rp{Prop::Reg_H-R}. Since $|\Phi(z, \zeta)| \leq C|z-\zeta|$, we can assume that $|\Phi(z,\zeta)| < 1$ for any $z, \zeta \in V$. Therefore for any $r \leq m$, 
	\begin{align} \label{K0'est2}  
	|\Kc_0' f(z)| \leq 
	\int_{\U \sm \ov{ \Om}}\frac{ |\Sc^l f (\zeta)|  |P_{0,0}(\zeta,z)|}{|\Phi^{r +l+1}(z, \zeta)| |\zeta-z|^{2n-3}} \, dV(\zeta) 
	\leq  \int_{\U \sm \ov{ \Om}} \frac{  |\Sc^l f (\zeta)| |P_{0,0}(\zeta,z)|}{|\Phi^{m+l+1}(z, \zeta)| |\zeta-z|^{2n-3}} \, dV(\zeta) 
	\end{align}
	and 
	\begin{align}   \label{K1'est2}  
	|\Kc_1' f(z)| \leq 
	\int_{\U \sm \ov{ \Om}}\frac{ |\Sc^l f (\zeta)|  |P_{0,l}(\zeta,z)|}{|\Phi^{r+1}(z, \zeta)| |\zeta-z|^{2n-3}} \, dV(\zeta) 
	\leq  \int_{\U \sm \ov{ \Om}} \frac{  |\Sc^l f (\zeta)| |P_{0,l}(\zeta,z)|}{|\Phi^{m+1}(z, \zeta)| |\zeta-z|^{2n-3}} \, dV(\zeta) . 
	\end{align}
Inequalities \re{K0'est} and \re{K1'est} then follow by the proof of \rp{Prop::Hq_Ckbdy}, if $s > -k+\frac{1}{p}$, and $m, l$ are sufficiently large. 
	\end{proof} 

	We can now prove \rt{Thm::EstHq} using \rp{lord_est} and \rp{H-L}. 
	
	\begin{proof}[Proof of \rt{Thm::EstHq}]
	We have for any $s > -k + \frac{1}{p}$ and $m \in \Z^+$ sufficiently large, 
	\begin{align} \label{H1_last_est}
	\| \Hc_q^1 \var  \|_{H^{s+ \yh,p} (\Om)} 
	\lesssim \sum_{|\gm| \leq m} \| \del^{m- s - \yh} D^{\gm} \Hc^1_q \var \|_{L^p(\Om)}
	\lesssim  \| \var \|_{H^{s,p}(\Om)}. 
	\end{align} 
	Together with \rp{Prop::H0_est} for the operator $\Hc^0_q$, we have proved \rt{Thm::EstHq}.  
	\end{proof}

We now state and prove the version of \rt{Thm::EstHq} for $C^\infty$ boundary. The homotopy operator in this case still takes the form \re{Hqsum}-\re{H0H1}, but now for the kernel $K^{01}$ given in \re{K01} we use the classical Henkin-Ramirez function $W$ (see the remark after \rp{Prop::Reg_H-R}), and also in the formula for $\Hc^1_q$ (see \re{H0H1}) we take $l \geq -s +1$.   By Remark \ref{Rem::l_ind}, the definition of $\Hc_q$ does not depend the choice of $l$. 

\begin{thm}\label{Thm2::EstHq} 
	Let $\Omega\subset\C^n$ be a bounded strictly pseudoconvex domain with $C^{\infty}$ boundary. Let $1\le q\le n$ and let $\Hc_q$ be defined by \eqref{Hqsum}-\eqref{K01}, where $l$ is any positive integer greater or equal to $-s+1$. Then for any $s \in \R$ and $1<p<\infty$, $\Hc_q$ defines a bounded linear operator $\Hc_q: H_{(0,q)}^{s,p} (\Om)\to H^{s+\frac12,p}_{ (0,q-1)}(\Om)$.
\end{thm}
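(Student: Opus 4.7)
The plan is to mirror the proof of Theorem \ref{Thm::EstHq} while exploiting the stronger bounds \re{West2} available for the classical Henkin-Ramirez function. Decompose $\Hc_q = \Hc_q^0 + \Hc_q^1$ as in \re{Hqsum}-\re{H0H1}. The $\Hc_q^0$ bound is already proved for every $s \in \R$ by Proposition \ref{Prop::H0_est}, so the task reduces to showing $\Hc_q^1 \colon H^{s,p}_{(0,q)}(\Om) \to H^{s+\yh,p}_{(0,q-1)}(\Om)$ is bounded for every $s \in \R$, $1 < p < \infty$, provided $l \geq -s+1$.

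The key observation is that the classical $W$ satisfies $|D^i_z D^j_\zeta W(z,\zeta)| \leq C_{ij}$ uniformly for \emph{every} $i,j \geq 0$, with no $\del(\zeta)^{k+1-j}$ penalty. Consequently, when one differentiates $\Hc_q^1 \var$ up to order $m$ in $z$ and distributes the $\zeta$-derivatives on $K^{01}$ as in the passage leading to \re{int_der }, there is no longer any reason to avoid placing $\zeta$-derivatives on $W$: the integrand $\Sc^l f(\zeta) D_\zeta^{\mu_0+1} W / (\Phi^{\mu_1+1} |\zeta -z|^{2n-3})$ is worst (i.e., most singular in $\Phi$) when $\mu_0 = 0$ and $\mu_1 = m+l$. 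In particular, only the $\Kc_0$-type integral
\[
\Kc_0 f(z) = \int_{\U \sm \ov\Om} \Sc^l f(\zeta)\, \frac{D_\zeta W(z,\zeta)}{\Phi^{m+l+1}(z,\zeta)\,|\zeta-z|^{2n-3}}\, dV(\zeta)
\]
needs to be estimated; the $\Kc_1$-type integral (which in the $C^{k+2}$ setting produced the restriction $s > \tfrac{1}{p} - k$) never appears.

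Now I would follow Case 1 of the proof of Proposition \ref{Prop::Hq_Ckbdy} verbatim: insert a weight $\del(\zeta)^{\pm \eta}$, apply H\"older, use Lemma \ref{lem::intest} twice (once to integrate out $\zeta$, once to integrate out $z$ after Fubini), and use Proposition \ref{Prop::C3} on the remaining $\del(\zeta)^{(-s-l+1)p}$ factor. This yields the weighted estimate
\[
\|\del^{m-s-\yh} \Kc_0 f\|_{L^p(\Om)} \lesssim \|\Sc^l f\|_{H^{s+l-1,p}(\C^n)} \lesssim \|f\|_{H^{s-1,p}(\R^N)} \lesssim \|\var\|_{H^{s,p}(\Om)},
\]
where the second bound uses Theorem \ref{Thm::Lip_anti} and the third uses Proposition \ref{ext_op} together with the fact that $[\dbar,\Ec]$ is a commutator of a differential operator of order one with a bounded extension. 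The admissible range from Lemma \ref{lem::intest}, namely $\tfrac{1}{p} - l < s < m - \yh + \tfrac{1}{p}$, is satisfied for every given $s \in \R$ by choosing $l \geq -s + 1$ and $m$ sufficiently large. Because of Remark \ref{Rem::l_ind}, the resulting operator $\Hc_q^1$ does not depend on the particular $l$ so chosen. Finally, an analogue of Proposition \ref{lord_est} extends the weighted bound to all lower-order $D^r \Hc_q^1 \var$ with $r \leq m$, and the Hardy-Littlewood-type inequality invoked in \re{H1_last_est} converts the family of weighted $L^p$ estimates into the desired $H^{s+\yh,p}(\Om)$ bound.

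The main obstacle is really just bookkeeping: verifying that taking $l$ as large as one wishes is legitimate (guaranteed by Remark \ref{Rem::l_ind}, which requires reading off that $T_q^l$ is independent of $l$ on smooth forms and then extending by density in $H^{s+\yh,p}$), and confirming that the single admissible window for $s$ from the $\Kc_0$ analysis can be arranged to cover every prescribed $s$ by enlarging $l$ and $m$ in tandem. With these two points dispatched, Theorem \ref{Thm2::EstHq} follows from the $\Hc_q^0$ estimate together with the $\Kc_0$ estimate above, exactly as in the proof of Theorem \ref{Thm::EstHq}.
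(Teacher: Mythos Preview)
Your proposal is correct and follows essentially the same route as the paper: the paper's proof also reduces to the $\Kc_0$-type integral alone (since \re{West2} removes the $\del(\zeta)^{k-l}$ penalty that produced $\Kc_1$), invokes Case~1 of Proposition~\ref{Prop::Hq_Ckbdy} to obtain the weighted estimate with admissible range containing any $s$ once $l$ and $m$ are chosen large enough, and then concludes via the analogue of Proposition~\ref{lord_est} and estimate \re{H1_last_est}. Your explicit range $\tfrac{1}{p}-l<s<m-\yh+\tfrac{1}{p}$ is in fact the sharp one from \re{K0_s_range}, matching (and slightly refining) the paper's stated window.
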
 
\begin{proof}
  The proof follows the same strategy as for \rt{Thm::EstHq}, namely proving \rp{Prop::Hq_Ckbdy}, \rp{lord_est} and then using estimate \re{H1_last_est}. The only difference here is that since $W$ satisfies the estimate \re{West2}, in the proof of \rp{Prop::Hq_Ckbdy} we only have to estimate the integral 
  \[
    \int_\Om \del(z)^{(m-s-\yh)p} |\Kc_0 f (z)|^p \, dV(z) 
    \lesssim \| \Sc^l [\dbar, \Ec]\var \|^p_{H^{s-1+l,p}(\Om)}
    \lesssim \| \var \|_{H^{s,p}(\Om)}^p, 
  \] 
  which holds for s in the range $-l +1 \leq s \leq m + \yh$. By choosing suitable $l$ and $m$, it holds for any $s \in \R$. 
\end{proof}

Using \rt{Thm2::EstHq}, we can prove the following homotopy formula on $C^\infty$ domains. The proof is identical to that of \rc{Cor::hf}. 

\begin{cor}[Homotopy formula] \label{Cor2::hf} 
	Under the assumptions of \rt{Thm2::EstHq}, let $1\le q\le n$, $s\in\R$ and $1<p<\infty$. Suppose $\varphi\in H^{s,p}_{0,q}(\Om)$ satisfies $\dbar\varphi\in H^{s,p}_{(0,q+1)}(\Om)$. Then the following homotopy formula holds in the sense of distributions: 
	\begin{equation*}
	\varphi=\dbar \Hc_q\varphi+\Hc_{q+1}\dbar\varphi.
	\end{equation*}
	In particular for $\varphi \in H^{s,p}_{(0,q)}(\Om)$ which is $\dbar$-closed, we have $\dbar \Hc_q \var = \var$ and $\Hc_q \var \in H^{s+\frac12,p}_{ (0,q-1)} (\Om)$. 
\end{cor}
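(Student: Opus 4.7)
The plan is to mimic the proof of \rc{Cor::hf} verbatim, with the observation that because \rt{Thm2::EstHq} now yields boundedness of $\Hc_q$ on $H^{s,p}$ for all real $s$ (rather than only $s>\frac1p-k$), the approximation step in the $C^\infty$ case imposes no lower bound on $s$. So I would first verify the homotopy formula for smooth $\varphi\in C^\infty_{(0,q)}(\overline{\Omega})$, then extend to general $\varphi$ by density.

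First I would handle the smooth case. For $\varphi\in C^\infty_{(0,q)}(\overline\Omega)$, \rl{comm_antider}~\ref{comm_antider_2} says $\Sc^{l,\gamma}_\Omega[\dbar,\Ec]\varphi\big|_\Omega\equiv0$, and since this anti-derivative is $C^\infty_c(\Uc)$ when $\varphi$ is smooth, continuity across $b\Omega$ gives $\Sc^{l,\gamma}_\Omega[\dbar,\Ec]\varphi\big|_{b\Omega}\equiv 0$. Integrating by parts on $\Uc\setminus\overline\Omega$ moves all the $D^\gamma_\zeta$ off of $K^{01}_{0,q-1}(z,\cdot)$ onto the anti-derivatives without any boundary contribution, and then summing via the identity $[\dbar,\Ec]\varphi=\sum_{|\gamma|\le l}D^\gamma\Sc^{l,\gamma}_\Omega[\dbar,\Ec]\varphi$ from \rl{comm_antider}~\ref{comm_antider_2} collapses the second piece of $\Hc_q\varphi$ to $\int_{\Uc\setminus\overline\Omega}K^{01}_{0,q-1}(z,\cdot)\wedge[\dbar,\Ec]\varphi$. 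Once $\Hc_q\varphi$ is realized in this classical form, the homotopy formula $\varphi=\dbar\Hc_q\varphi+\Hc_{q+1}\dbar\varphi$ is exactly \cite[Proposition~2.1]{Gong19}.

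Next I would pass to general $\varphi\in H^{s,p}_{(0,q)}(\Omega)$ with $\dbar\varphi\in H^{s,p}_{(0,q+1)}(\Omega)$ by approximation. Using the smoothing result (\rp{Prop::Sobolev_smoothing} from \cite{S-Y21-1}) that was invoked in the proof of \rc{Cor::hf}, I would pick $\varphi_\varepsilon\in C^\infty_{(0,q)}(\overline\Omega)$ with $\varphi_\varepsilon\to\varphi$ and $\dbar\varphi_\varepsilon\to\dbar\varphi$ in $H^{s,p}(\Omega)$. By \rt{Thm2::EstHq} applied at the given real $s$, one has $\|\Hc_q(\varphi_\varepsilon-\varphi)\|_{H^{s+1/2,p}}\lesssim\|\varphi_\varepsilon-\varphi\|_{H^{s,p}}$ (so $\dbar\Hc_q\varphi_\varepsilon\to\dbar\Hc_q\varphi$ in $H^{s-1/2,p}$) and similarly $\Hc_{q+1}\dbar\varphi_\varepsilon\to\Hc_{q+1}\dbar\varphi$ in $H^{s+1/2,p}$. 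Passing to the limit in the smooth identity yields the formula in the sense of distributions.

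The main subtlety, and the only place where the $C^\infty$ case differs from the $C^{k+2}$ case of \rc{Cor::hf}, is making sure the approximation step closes for arbitrary $s\in\R$: this is precisely what \rt{Thm2::EstHq} provides, since the choice of $l\ge -s+1$ depends on $s$ but \rrem{Rem::l_ind} guarantees that $\Hc_q\varphi$ is independent of this choice. The $\dbar$-closed case follows by setting $\dbar\varphi=0$, giving $\varphi=\dbar\Hc_q\varphi$ with $\Hc_q\varphi\in H^{s+1/2,p}_{(0,q-1)}(\Omega)$ from the mapping property.
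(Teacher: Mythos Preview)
Your proposal is correct and follows essentially the same approach as the paper, which simply states that the proof is identical to that of \rc{Cor::hf}. You have reproduced that argument faithfully: verify the formula for smooth $\varphi$ via integration by parts and \cite[Proposition~2.1]{Gong19}, then pass to general $\varphi$ by approximation using \rp{Prop::Sobolev_smoothing} and the boundedness from \rt{Thm2::EstHq}, noting correctly that the $C^\infty$ case removes the lower bound on $s$.
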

In other words we have proved \rt{Thm::Cinfty_bdy_intro}.

\appendix 
\section{Some Results on Sobolev Spaces}\label{SectionA} 
	
\begin{prop}[Equivalent norms] \label{Prop::EqvNorm}
Let $s\in\R$, $1<p<\infty$, and let $\Omega\subset\R^N$ be a bounded Lipschitz domain. Then for $m\in\Z^+$, $H^{s,p}(\Omega)$ have the following equivalent norm:
\begin{equation*}
		\|f\|_{H^{s,p}(\Omega)}\approx\sum_{|\alpha|\le m}\|\partial^\alpha f\|_{H^{s-m,p}(\Omega)}.%,\quad\|f\|_{\Co^s(\Omega)}\approx\sum_{|\alpha|\le m}\|\partial^\alpha f\|_{\Co^{s-m}(\Omega)}. 
		\end{equation*} 
	\end{prop}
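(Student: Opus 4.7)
The proof rests on three ingredients: the analogous norm equivalence on $\R^N$, the universal extension operator $\Ec$ of \rp{ext_op}, and the anti-derivative operators of \rt{Thm::Lip_anti}. The $\R^N$ version
\begin{equation*}
\|g\|_{H^{s,p}(\R^N)} \approx \sum_{|\alpha|\leq m}\|\partial^\alpha g\|_{H^{s-m,p}(\R^N)}
\end{equation*}
is classical: the $\gtrsim$ direction follows from the Mikhlin multiplier theorem applied to the symbols $(2\pi i\xi)^\alpha(1+|\xi|^2)^{-m/2}$ for $|\alpha|\leq m$, and the $\lesssim$ direction uses the polynomial identity $(1+|\xi|^2)^m=\sum_{|\alpha|\leq m}c_\alpha\xi^{2\alpha}$ together with the resulting operator factorization $(I-\Delta)^m=\sum c_\alpha(-1)^{|\alpha|}\partial^{2\alpha}$ and a second application of Mikhlin to $\xi^\alpha(1+|\xi|^2)^{-m/2}$.

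The inequality $\sum_{|\alpha|\leq m}\|\partial^\alpha f\|_{H^{s-m,p}(\Om)}\ls\|f\|_{H^{s,p}(\Om)}$ is a routine extension argument: for any $\tilde f\in H^{s,p}(\R^N)$ extending $f$, the distribution $\partial^\alpha\tilde f\in H^{s-m,p}(\R^N)$ extends $\partial^\alpha f$ since differentiation commutes with restriction in the distribution sense, so $\|\partial^\alpha f\|_{H^{s-m,p}(\Om)}\leq\|\partial^\alpha\tilde f\|_{H^{s-m,p}(\R^N)}$; summing over $\alpha$, invoking the $\R^N$ equivalence, and taking the infimum over extensions yields the bound.

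For the harder direction $\|f\|_{H^{s,p}(\Om)}\ls\sum_{|\alpha|\leq m}\|\partial^\alpha f\|_{H^{s-m,p}(\Om)}$, the plan is to use the universal extension $\Ec:H^{s',p}(\Om)\to H^{s',p}(\R^N)$, which is bounded for every $s'\in\R$. For each $|\alpha|\leq m$, set $g_\alpha:=\Ec(\partial^\alpha f)\in H^{s-m,p}(\R^N)$; then $g_\alpha|_\Om=\partial^\alpha f$ and $\|g_\alpha\|_{H^{s-m,p}(\R^N)}\ls\|\partial^\alpha f\|_{H^{s-m,p}(\Om)}$. Motivated by the identity from the first paragraph, I would define the candidate extension
\begin{equation*}
\tilde f:=\sum_{|\alpha|\leq m}c_\alpha(-1)^{|\alpha|}(I-\Delta)^{-m}\partial^\alpha g_\alpha,
\end{equation*}
which lies in $H^{s,p}(\R^N)$ with norm controlled by $\sum_\alpha\|\partial^\alpha f\|_{H^{s-m,p}(\Om)}$, since each factor $(I-\Delta)^{-m}\partial^\alpha$ has order $|\alpha|-2m\leq -m$. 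Writing the discrepancy $\tilde f-\Ec f=\sum c_\alpha(-1)^{|\alpha|}(I-\Delta)^{-m}\partial^\alpha\delta_\alpha$ with $\delta_\alpha:=g_\alpha-\partial^\alpha\Ec f$ vanishing on $\Om$, I would then apply \rt{Thm::Lip_anti} with sufficiently large $M$ to each $\delta_\alpha$ to obtain $\delta_\alpha=\sum_{|\beta|\leq M}D^\beta\Sc^{M,\beta}_\Om\delta_\alpha$ where the pieces themselves vanish on $\Om$ and gain $M$ derivatives; this allows a bounded $H^{s,p}(\R^N)$ correction supported outside $\Om$ to be subtracted from $\tilde f$, producing a genuine extension of $f$ in $H^{s,p}(\R^N)$ with the claimed norm bound.

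The main obstacle is precisely this correction step: the Bessel potential $(I-\Delta)^{-m}$ is nonlocal, so even though each $\delta_\alpha$ vanishes on $\Om$, the expression $(I-\Delta)^{-m}\partial^\alpha\delta_\alpha$ generally does not, and hence $\tilde f|_\Om\neq f$ a priori. It is the support-preserving property (item (iii)) of the anti-derivative operators $\Sc^{M,\beta}_\Om$ from \rt{Thm::Lip_anti}, in tandem with the regularity gain (item (i)), that enables the required correction to be carried out entirely within $H^{s,p}(\R^N)$; this is the nontrivial input beyond the standard Mikhlin-multiplier-based theory on $\R^N$.
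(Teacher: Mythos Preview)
The paper does not actually prove this proposition: it is simply quoted from \cite[Theorem~1.4]{S-Y21-1}, so there is no in-paper argument to compare against. Your easy direction is fine, and there is no circularity in invoking \rt{Thm::Lip_anti}, whose proof in Section~\ref{Section3} is independent of this appendix.

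The hard direction, however, has a real gap at the ``correction step.'' With your notation, $\delta_\alpha=g_\alpha-\partial^\alpha\Ec f$ lies only in $H^{s-m-|\alpha|,p}(\R^N)$ (you only know $f\in H^{s-m,p}(\Omega)$, so $\partial^\alpha\Ec f\in H^{s-m-|\alpha|,p}$), and consequently $(I-\Delta)^{-m}\partial^\alpha\delta_\alpha$ is only in $H^{s-|\alpha|,p}(\R^N)$; the individual summands in $\tilde f-\Ec f$ are no better than $H^{s-m,p}$. Applying \rt{Thm::Lip_anti} to $\delta_\alpha$ writes $\delta_\alpha=\sum_{|\beta|\le M}D^\beta h_{\alpha,\beta}$ with $h_{\alpha,\beta}|_\Omega=0$ and $h_{\alpha,\beta}\in H^{s-m-|\alpha|+M,p}$, but substituting back gives
\[
\tilde f-\Ec f=\sum_{\alpha,\beta}c_\alpha(-1)^{|\alpha|}(I-\Delta)^{-m}\partial^\alpha D^\beta h_{\alpha,\beta},
\]
and each term is only in $H^{s-m-|\alpha|+M+2m-|\alpha|-|\beta|,p}$, which for $|\beta|=M$ is again $H^{s+m-2|\alpha|,p}$: the $M$-derivative gain from the anti-derivative operator is exactly eaten by the $D^\beta$ you reintroduce. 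More importantly, the Bessel potential $(I-\Delta)^{-m}$ destroys the support condition $h_{\alpha,\beta}|_\Omega=0$, so none of these pieces is ``supported outside $\Omega$'' as you claim. You therefore have neither produced a genuine $H^{s,p}$ extension of $f$ nor a correction that vanishes on $\Omega$; the argument as written is circular, since showing $\tilde f-\Ec f\in H^{s,p}$ amounts to showing $\Ec f\in H^{s,p}$, i.e.\ $f\in H^{s,p}(\Omega)$.

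A workable route for the hard direction avoids the global Bessel potential entirely and instead uses Rychkov's \emph{intrinsic} Littlewood-Paley characterization on (special) Lipschitz domains, which underlies both \rp{ext_op} and the construction in Section~\ref{Section3}: with $(\phi_j)$ supported in $-\Kb$ one has $\|f\|_{H^{s,p}(\omega)}\approx\big\|\big(\sum_j 2^{2js}|\phi_j\ast f|^2\big)^{1/2}\big\|_{L^p(\omega)}$, and for this norm the identity $\phi_j=\sum_{|\gamma|=m}2^{-jm}D^\gamma\eta_j^\gamma$ (cf.\ \rp{hord_antider} and the proof of \rp{special_Lip_anti}) lets one transfer the $m$ derivatives onto $f$ term by term while staying inside $\omega$. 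That is essentially the mechanism used in \cite{S-Y21-1}; your Bessel-potential scheme does not share this locality and cannot be repaired just by raising $M$.
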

	
	This is \cite[Theorem 1.4]{S-Y21-1}.
	
	\begin{prop} \label{H-L}
		Let $1< p < \infty$, $m \in \N$  and $r \in \R$ with $r <m$. Let $\Om$ be a bounded  Lipschitz domain and define $\del(x)$ be the distance function to the boundary $b \Om$. If $u \in W^{m,p}_{\loc} (\Om)$, and $\| \del^{m-r } D^{m} u \|_{L^p(\Om)} < \infty$, then $u \in H^{r,p} (\Om)$. Furthermore, there exists a constant $C$ that does not depend on $u$ such that  
		\[
		\| u \|_{H^{r,p} (\Om)} \leq C \sum_{|\beta| \leq m} \| \del^{m-r} D^{\beta} u \|_{L^p(\Om)}. 
		\]                                         
	\end{prop}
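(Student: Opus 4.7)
The plan is to reduce the problem to a duality-and-Hardy argument, using the equivalent norm for $H^{r,p}(\Om)$ already recorded in Proposition \ref{Prop::EqvNorm}. Specifically, that result gives
\begin{equation*}
  \|u\|_{H^{r,p}(\Om)} \approx \sum_{|\alpha|\le m}\|\partial^\alpha u\|_{H^{r-m,p}(\Om)},
\end{equation*}
so it suffices, for each $|\alpha|\le m$, to control $\|\partial^\alpha u\|_{H^{r-m,p}(\Om)}$ by $\|\delta^{m-r}D^\alpha u\|_{L^p(\Om)}$. Since $u \in W^{m,p}_{\loc}(\Om)$, the distribution $\partial^\alpha u$ is an honest $L^p_{\loc}$ function on $\Om$, which is the object we want to identify inside $H^{r-m,p}(\Om)$.

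Next I would invoke the intrinsic duality from Proposition \ref{Prop::DualSpace}\ref{Item::DualSpace::H0Char}: setting $s=m-r$, which is positive by hypothesis and thus satisfies $s>-1/p>\frac{1}{p'}-1$, we have
\begin{equation*}
H^{r-m,p}(\Om)=H^{-s,p}(\Om)=\bigl(H^{s,p'}_0(\Om)\bigr)'.
\end{equation*}
Therefore, to bound $\|\partial^\alpha u\|_{H^{r-m,p}(\Om)}$ it is enough to estimate the pairing $\langle \partial^\alpha u, g\rangle$ uniformly for $g\in C^\infty_c(\Om)$ with $\|g\|_{H^{m-r,p'}_0(\Om)}\le 1$, and then use density of $C^\infty_c(\Om)$ in $H^{m-r,p'}_0(\Om)$.

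For such $g$, since $\partial^\alpha u \in L^p_{\loc}(\Om)$, Hölder's inequality gives
\begin{equation*}
  |\langle \partial^\alpha u,g\rangle|
  = \Bigl|\int_\Om (\delta^{m-r}\partial^\alpha u)\bigl(\delta^{-(m-r)}g\bigr)\,dV\Bigr|
  \le \|\delta^{m-r}\partial^\alpha u\|_{L^p(\Om)}\,\|\delta^{-(m-r)}g\|_{L^{p'}(\Om)}.
\end{equation*}
The weighted factor on $g$ is exactly what the Hardy-type estimate of Proposition \ref{Prop::C3} controls: with $s=m-r\ge 0$ one obtains $\|\delta^{-(m-r)}g\|_{L^{p'}(\Om)}\lesssim \|g\|_{H^{m-r,p'}_0(\Om)}$. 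Combining this with the previous inequality and taking the supremum over admissible $g$ yields $\|\partial^\alpha u\|_{H^{r-m,p}(\Om)}\lesssim \|\delta^{m-r}D^\alpha u\|_{L^p(\Om)}$, and summing over $|\alpha|\le m$ together with Proposition \ref{Prop::EqvNorm} delivers the desired estimate.

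The main obstacle, more conceptual than computational, is to make sure the distributional object $\partial^\alpha u$ produced from $u\in W^{m,p}_{\loc}(\Om)$ really is identified with the element of $H^{r-m,p}(\Om)$ coming out of the duality; this identification is automatic once one observes that both define the same functional on $C^\infty_c(\Om)\subset H^{m-r,p'}_0(\Om)$, by the integration-by-parts formula for $W^{m,p}_{\loc}$ functions. A secondary technical point is verifying that the indices fall in the admissible ranges of Propositions \ref{Prop::DualSpace} and \ref{Prop::C3}; this is where the hypothesis $r<m$ is used (so that $m-r>0$), together with $1<p<\infty$.
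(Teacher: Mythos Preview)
Your argument is correct and follows essentially the same route as the paper: both reduce via Proposition~\ref{Prop::EqvNorm} to the estimate $\|v\|_{H^{r-m,p}(\Om)}\lesssim\|\delta^{m-r}v\|_{L^p(\Om)}$ for $v=\partial^\alpha u\in L^p_{\loc}(\Om)$, which the paper simply cites from \cite[Proposition~5.4]{S-Y21-1} while you unpack it via the duality of Proposition~\ref{Prop::DualSpace}\ref{Item::DualSpace::H0Char} together with the Hardy inequality of Proposition~\ref{Prop::C3}. This is exactly the expected proof of the cited result, so there is no substantive difference.
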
	
	\begin{proof}
		For $m =0$, the statement is proved in \cite[Proposition 5.4]{S-Y21-1}, namely for $u \in L^p_{\loc}(\Om)$,  
		\[
		\| u \|_{H^{-s,p}(\Om)} \leq C \| \del^{s} u \|_{L^p (\Om)}, \quad s \geq 0.  
		\] 
		The general case follows by applying  \rp{Prop::EqvNorm}: 
		\begin{equation*}
		\|f\|_{H^{r,p}(\Omega)}\lesssim\sum_{|\alpha|\le m}\|\partial^\alpha f\|_{H^{r-m,p}(\Omega)}\lesssim\sum_{|\alpha|\le m}\|\delta^{m-r}\partial^\alpha f\|_{L^p(\Omega)}, \quad m > r. \qedhere 
		\end{equation*} 
	\end{proof}
	
	\begin{prop}\label{Prop::Sobolev_smoothing}
	    Let $\Om \subset \C^n$ be a bounded Lipschitz domain and let $f \in H^{s,p}_{(0,q)}(\Om)$ with $\dbar f \in H^{s,p}_{(0,q+1)}(\Om)$, then there exists a sequence $f_{\ve} \in C^{\infty}_{(0,q)}(\ov \Om)$ such that 
	    \begin{gather*}
	    f_{\ve} \xrightarrow{\ve \to 0} f \quad \text{in $H^{s,p}(\Om)$;}
	    \\ 
	    \dbar f_{\ve} \xrightarrow{\ve \to 0} \dbar f \quad \text{in $H^{s,p}(\Om)$.}
	    \end{gather*}
	\end{prop}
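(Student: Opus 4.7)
The plan is to combine the extension operator from \rp{ext_op} with standard convolution by a compactly supported mollifier, and to absorb the obstruction coming from the fact that $\dbar$ does not commute with extension.

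\textbf{Step 1: Extension and mollification.} I would first apply the Rychkov extension $\Ec$ coefficient-wise to obtain $\Ec f \in H^{s,p}_{(0,q)}(\C^n)$. Fix $\phi \in C_c^\infty(\C^n)$ with $\int \phi = 1$ and set $\phi_\ve(x) := \ve^{-2n}\phi(x/\ve)$. Define
\begin{equation*}
f_\ve := (\Ec f)\ast \phi_\ve\big|_{\ov\Om},
\end{equation*}
which lies in $C^\infty_{(0,q)}(\ov\Om)$. Since $(I-\Delta)^{s/2}$ commutes with convolution and $L^p$-convergence under mollification is classical, $(\Ec f)\ast \phi_\ve \to \Ec f$ in $H^{s,p}(\C^n)$. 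Because the restriction map $H^{s,p}(\C^n) \to H^{s,p}(\Om)$ is continuous by definition of the restriction norm, this gives $f_\ve \to f$ in $H^{s,p}_{(0,q)}(\Om)$.

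\textbf{Step 2: The commutator term.} The essential point is that, although $\dbar(\Ec f)$ need not equal $\Ec(\dbar f)$ globally, their difference
\begin{equation*}
g := \dbar(\Ec f) - \Ec(\dbar f) \in H^{s,p}_{(0,q+1)}(\C^n)
\end{equation*}
vanishes on $\Om$. Indeed, since $\Ec$ is an extension operator, $(\Ec f)|_\Om = f$ and $(\Ec \dbar f)|_\Om = \dbar f$, and distributional differentiation commutes with restriction to the open set $\Om$, so $(\dbar \Ec f)|_\Om = \dbar f = (\Ec \dbar f)|_\Om$. Hence $g|_\Om \equiv 0$.

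\textbf{Step 3: Convergence of $\dbar f_\ve$.} Since convolution commutes with the (distributional) operator $\dbar$,
\begin{equation*}
\dbar f_\ve = \bigl((\dbar \Ec f)\ast \phi_\ve\bigr)\big|_\Om = \bigl((\Ec\dbar f)\ast\phi_\ve\bigr)\big|_\Om + (g\ast \phi_\ve)\big|_\Om.
\end{equation*}
The first piece tends to $\Ec(\dbar f)$ in $H^{s,p}(\C^n)$ and therefore to $\dbar f$ in $H^{s,p}(\Om)$. For the second, $g\ast\phi_\ve \to g$ in $H^{s,p}(\C^n)$, hence $(g\ast \phi_\ve)|_\Om \to g|_\Om = 0$ in $H^{s,p}(\Om)$ by continuity of restriction. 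Combining gives $\dbar f_\ve \to \dbar f$ in $H^{s,p}_{(0,q+1)}(\Om)$.

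\textbf{Anticipated obstacle.} The only real subtlety is that naive mollification of the extension fails to preserve the identity $\dbar f_\ve \approx \dbar f$ on $\Om$ unless one controls the commutator $[\dbar,\Ec]f$. The observation that this commutator vanishes on $\Om$, together with continuity of the restriction $H^{s,p}(\C^n) \to H^{s,p}(\Om)$, is exactly what sidesteps the need for any more delicate boundary regularization (such as the cone-translation trick used elsewhere in the paper).
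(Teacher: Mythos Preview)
There is a genuine gap in Step~2. You assert that
\[
g := \dbar(\Ec f) - \Ec(\dbar f) \in H^{s,p}_{(0,q+1)}(\C^n),
\]
but this is not justified. Since $\Ec f \in H^{s,p}(\C^n)$ and $\dbar$ is a first-order operator, one only has $\dbar(\Ec f)\in H^{s-1,p}(\C^n)$; together with $\Ec(\dbar f)\in H^{s,p}(\C^n)$ this gives merely $g\in H^{s-1,p}(\C^n)$. The hypothesis $\dbar f\in H^{s,p}(\Omega)$ says nothing about $\dbar(\Ec f)$ outside $\Omega$. For a concrete obstruction, take $f$ holomorphic on $\Omega$ with $f\in H^{s,p}(\Omega)\setminus H^{s+1,p}(\Omega)$ (e.g.\ $f(z)=(1-z_1)^\alpha$ on the ball for suitable $\alpha$); then $\dbar f=0$, so $g=\dbar(\Ec f)$, and there is no mechanism forcing $\Ec f$ to gain a derivative.

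This breaks Step~3: you only get $g\ast\phi_\ve\to g$ in $H^{s-1,p}(\C^n)$, hence $(g\ast\phi_\ve)|_\Omega\to 0$ in $H^{s-1,p}(\Omega)$, one full derivative short of what is needed. The continuity of restriction does not recover the missing regularity, and a centred mollifier genuinely smears mass from $\C^n\setminus\Omega$ across $b\Omega$ into $\Omega$.

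The paper's argument avoids this by using a partition of unity and mollifiers $\psi_\nu$ supported in the \emph{negative} cones $-K_\nu$ adapted to the local Lipschitz graphs. The point of this cone-translation device is that for $z\in\Omega$ the value $(\chi_\nu h)\ast\psi_{\nu,\ve}(z)$ samples $h$ only at points $z-w$ with $-w\in K_\nu$, which (after the cutoff $\chi_\nu$) lie in $\Omega$. Hence on $\Omega$ one may replace $\dbar\tilde f$ by any $H^{s,p}$-extension of $\dbar f$ when computing the first sum in the Leibniz expansion, and the commutator with the extension never enters. Your ``anticipated obstacle'' paragraph identifies exactly the right issue but draws the wrong conclusion: the cone trick is not a convenience here, it is what supplies the missing derivative.
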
	    
	\begin{proof} 
	First, we show that for a function $g \in H^{s,p}(\C^n)$, 
	there exist $g_{\ve}$ such that 
	 $\| g_{\ve} - g \|_{H^{s,p}(\C^n)} \to 0$. Let $\psi \in C^{\infty}_c(-K)$, where $K$ is some cone centered at $0$; we set $\psi_{\ve}:= \ve^{-N} \psi(\ve^{-1}x) $ and $g_{\ve} := \psi_{\ve} \ast g$. We have
  \begin{align*}
   \| g_{\ve} - g \|_{H^{s,p}(\C^n)}
&= \| \psi_{\ve} \ast g  - g \|_{H^{s,p}(\C^n)}
\\ &= \| (I- \Del)^{\frac{s}{2}} (\psi_{\ve} \ast g) - (I- \Del)^{\frac{s}{2}} g  \|_{L^p(\C^n)} 
  \end{align*}
We claim that $(I - \Del)^{\frac{s}{2}}(\psi_{\ve} \ast g) =
\psi_{\ve} \ast (I - \Del)^{\frac{s}{2}}g $. Indeed, this can be seen by taking Fourier transform: $(1+ 4 \pi |\xi|^2)^{\frac{s}{2}} \hht{\psi_{\ve}} \hht{g} = \hht{\psi_{\ve}} (1+ 4 \pi |\xi|^2)^{\frac{s}{2}} \hht{g}$. Hence 
\[
  \| g_{\ve} - g \|_{H^{s,p}(\C^n)}
  = \| \psi_{\ve} \ast (I- \Del)^{\frac{s}{2}} g - (I- \Del)^{\frac{s}{2}} g  \|_{L^p(\C^n)} \to 0, \quad \text{as $\ve \to 0$}. 
\]
Now let $\Om$ be a bounded Lipschitz domain. 
Take an open covering $\{ U_{\nu} \}_{\nu=0}^M$ of $\Om$ such that 
\[
	U_0  \subset\subset \Om, \quad b \Om \seq \bigcup_{ \nu=1}^M U_{\nu},  \quad U_{\nu} \cap \Om = U_{\nu} \cap \Phi_\nu( \om_{\nu}) ,  \quad \nu=1, \dots, M. 
\]
Here each $\om_{\nu}$ is special Lipschitz domain of the form $\om_{\nu} = \{ x_{2n} > \rho_{\nu} (x') \} $, and $\Phi_\nu$, $1\le \nu\le M$ are invertible affine linear transformations.

Let $(\chi_{\nu})_{\nu=0}^M$ be a partition of unity associated with $\{U_{\nu} \}$, i.e. $\chi_{\nu} \in C^{\infty}_c(U_{\nu}) $ and $\sum_{\nu=0}^M \chi_{\nu} = 1$. We have the property $\om_{\nu} + \Kb = \om_{\nu}$, where $\Kb:= \{ x\in \C^n: x_{2n} >|x'| \}$. We denote $K_\nu=\{\nabla\Phi_\nu\cdot v:v\in \Kb\}$ so that $\Phi_\nu(\omega_\nu)+K_\nu=\Phi_\nu(\omega_\nu)$. 

Take $\psi_0 \in C^{\infty}_{c} (\B^{2n})$ with $\psi_0 \geq 0$ and $\int_{\C^n} \psi_0 = 1$. For $1 \leq \nu \leq {2n}$, take $\psi_\nu\in C^{\infty}_{c} (-K_\nu)$ with $\psi_\nu \geq 0$ and $\int_{\C^{n}} \psi_\nu = 1$. 
Let  $\psi_{\nu, \ve} (x) := \ve^{-{2n}} \psi_\nu (\frac{x}{\ve})$. 
For $\ve>0$ sufficiently small, the convolutions 
$$(\chi_\nu f)\ast \psi_{\nu, \ve}(z)=\int_{-K_\nu} (\chi_{\nu} f) (z - \ve \zeta) \psi_\nu (\zeta) \, d V(\zeta),$$ is defined for $z\in\Omega$.  

Let $\wti{f}$ be an extension of $f$ and set
\[
  \wti{f}_{\ve}:=\sum_{\nu=0}^M (\chi_{\nu} \wti{f} ) \ast \psi_{\nu, \ve}. 
\] 
Clearly $ \wti{f}_{\ve} \in C^{\infty} (\C^n)$. By \cite[Theorem 2.8.2(ii)]{Tri83} (recall from Proposition \ref{Prop::L-P} that $H^{s,p}(\C^n)=\Fs_{p2}^s(\C^n)$), we have $\chi_{\nu} \wti f \in H^{s,p}(\C^n)$ for all $\nu$. 
	
Now by the first part of the proof applied to $g:= \chi_{\nu} \wti f$, $\nu=0,\dots, M$, we obtain  
	$\|\wti{f} _{\ve} - \sum_{\nu=0}^{M} \chi_{\nu} \wti f \|_{H^{s,p}(\C^n)}
	= \|\wti{f} _{\ve} - \wti{f} \|_{H^{s,p}(\C^n)} \to 0$ as $\ve \to 0$. 
By taking restriction $f_{\ve}:= \wti{f}_{\ve}|_{\Om}$, we obtain $\| f_{\ve} - f \|_{H^{s,p}(\Om)} \to 0$ as $\ve \to 0$. 

Finally, to show that $\| \dbar f_{\ve} - \dbar f \|_{H^{s,p}(\Om)} \to 0$, it suffices to show that $\| \dbar\wti f_{\ve} - \dbar\wti f \|_{H^{s,p}(\C^n)} \to 0$ since $\dbar \wti{f_{\ve}}|_{\Om} = \dbar f_{\ve}$ and $\dbar \wti f|_{\Om} = \dbar f$. We have 
\begin{equation} \label{der_cvg}
   \dbar \wti{f}_{\ve} = 
  \sum_{\nu=0}^M (\chi_{\nu} \dbar \wti{f}) \ast \psi_{\nu, \ve}
  + \sum_{\nu=0}^M (\dbar\chi_{\nu}\wedge \wti{f}) \ast \psi_{\nu, \ve} 
\end{equation}
By the first part of the proof applied to $g = \chi_{\nu} \dbar\wti f$ and $\dbar\chi_{\nu}\wedge f$, the first sum in \re{der_cvg} converges to 
$\dbar \wti{f}$, while the sum of last two terms converges to $\sum_{\nu= 0}^{\infty} (\dbar \chi_{\nu})\wedge \wti f = 0$ in $H^{s,p}(\C^n)$. This shows the claim $\|\dbar f_\eps-\dbar f\|_{H^{s,p}(\Omega)}\to0$ and concludes the whole proof. 
\end{proof}

	\bibliographystyle{amsalpha}
	\bibliography{master_CK} 
	
\end{document}